\documentclass{amsart}

\usepackage{amssymb,mathrsfs}
\usepackage{tikz}
\usepackage{xcolor}
\usetikzlibrary{matrix}
\usepackage[normalem]{ulem} 
\newtheorem{theorem}{Theorem}[section]
\newtheorem{lemma}[theorem]{Lemma}
\newtheorem{Proposition}[theorem]{Proposition}
\newtheorem{Corollary}[theorem]{Corollary}
\theoremstyle{definition}
\newtheorem{definition}[theorem]{Definition}
\newtheorem{example}[theorem]{Example}

\theoremstyle{remark}
\newtheorem{remark}[theorem]{Remark}

\numberwithin{equation}{section}

\begin{document}

\title[Translation complete subgroups]{Translation complete subgroups of affine Weyl-Heisenberg groups and their generalized wavelet systems}

\author{Narjes Rashidi}
\address{Department of Mathematics, RWTH Aachen University,  Kreuzherrenstrasse 2, 52062 Aachen, Germany}
\email{narjes.rashidi@gmail.com}

\author{Hartmut F\"uhr}
\address{Chair for Geometry and Analysis,
RWTH Aachen University, Kreuzherrenstrasse 2, 52062 Aachen, Germany}
\email{ fuehr@mathga.rwth-aachen.de}

\subjclass[2000]{Primary 54C40, 14E20; Secondary 46E25, 20C20}

\date{\today}


\keywords{Weyl-Heisenberg group, time-frequency analysis, wavelet analysis, irreducible representation, admissible vector, orbit space, unimodular group}

\begin{abstract}

The  $n$-dimensional affine Weyl-Heisenberg group is a Lie group typically parameterized as $G_{aWH} = \mathbb{T} \times \mathbb{R}^n \times \widehat{\mathbb{R}^n} \times \mathrm{GL}(n, \mathbb{R})$, generated by all translation, dilation, and modulation operators acting on $L^2(G)$. It was introduced by Torr\'esani and his coauthors as a common framework to discuss both wavelet and time-frequency analysis, as well as possible intermediate constructions. In this paper, we focus on a particular class of subgroups of $G_{aWH}$, namely those of the form $G = \mathbb{T} \times \mathbb{R}^n \times V \times H$, where $V$ is a subspace of $\mathbb{R}^n$ and $H$ is a closed subgroup of $\mathrm{GL}(n, \mathbb{R})$.

The main goal is to identify pairs $(V, H)$ that ensure the existence of an associated inversion formula, through the notion of square-integrable representations. We derive an admissibility criterion that is largely analogous to the well-known Calder\'on condition for the fully affine case, corresponding to $V = \{ 0 \}$. 


We then identify $G_{aWH}$ as a subgroup of the semidirect product of the $n$-dimensional Heisenberg group and the symplectic group $Sp(n,\mathbb{R})$, which acts via the extended metaplectic representation, and compare our admissibility conditions to existing criteria based on Wigner functions. 

Finally, we present a list of novel examples in dimensions two and three which illustrate the potential of our approach, and present some foundational results regarding the systematic construction, classification, and conjugacy of these groups. 

\end{abstract}

\maketitle



\section{Introduction}

The affine-Weyl Heisenberg group, denoted by $G_{aWH}$, was introduced by Torr\'esani and his coauthors in 1991 \cite{Torresani1991, Torresani1}. 
It can be understood as the group generated by the relevant unitary operators from wavelet and time-frequency analysis, namely translation operators $T_x$, dilations $D_h$, and modulations $M_\xi$, which act on $f \in L^2(\mathbb{R}^n)$ by
\[
T_x f (y) = f(y-x)~,~M_\xi f(y) = \exp(2 \pi i \langle y, \xi \rangle) f(y)~,~ D_h f(y) = |{\rm det}(h)|^{-1/2} f(h^{-1} y) ~, 
\] respectively. Throughout the paper we use $\mathcal{U}(\mathcal{H})$ to denote the unitary group of a Hilbert space $\mathcal{H}$. Furthermore, $\mathbb{R}^n$ denotes a space of column vectors, endowed with the usual scalar product and norm, and $\widehat{\mathbb{R}^n}$ its character group. For concrete computations we use the canonical identification  of $\widehat{\mathbb{R}^n}$ with $\mathbb{R}^n$ by associating to $\xi \in \mathbb{R}^n$ the character $x \mapsto \exp(2 \pi i \langle x, \xi \rangle)$; however, the distinction between $\mathbb{R}^n$ (acting via translation) and its dual group $\widehat{\mathbb{R}^n}$ (acting via modulation) provides helpful orientation in the following. 

Formally, the group is defined by
\[
  G_{aWH} = \mathbb{T} \times \mathbb{R}^n \times \widehat{\mathbb{R}^n} \times \mathrm{GL}(n, \mathbb{R})
\]
endowed with the product topology and the group action:
\begin{eqnarray} \nonumber
  \lefteqn{(z_1, x_1, \xi_1, h_1)(z_2, x_2, \xi_2, h_2)} \\ & = & (z_1 z_2 \exp(-2 \pi i \langle \xi_1, h_1 x_2 \rangle), x_1 + h_1 x_2, \xi_1 + h_1^{-T} \xi_2, h_1 h_2)\label{eqn:group_law}
\end{eqnarray}
where \( h^{-T} \) denotes \((h^{-1})^T\). This group law arises from the interaction of the associated unitary operators. This is achieved formally by considering the so-called \textit{quasi-regular representation}
$\pi : G_{aWH} \to \mathcal{U}(L^2(\mathbb{R}^n))$, which expresses the identification of the group elements with their associated operators. We therefore require that  
\[
\pi(z,0,0,I_n) = z \mathrm{id}_{L^2(\mathbb{R}^n)}, \pi (1,x,0,I_n) = T_x~,~\pi(1,0,\xi,I_n) = M_\xi~,~\pi(1,0,0,h) = D_h ~
\] Direct computation then allows us to verify that 
\begin{equation} \label{eqn:def_pi}
\pi(z,x,\xi,h) = z T_x M_\xi D_h 
\end{equation}
is indeed a strongly continuous homomorphism, i.e. a unitary representation of $G_{aWH}$. This representation is the central object of this paper.


As already observed in the initial sources, the affine Weyl-Heisenberg group provides a particularly convenient setup for the design and analysis of various \textit{hybrid} constructions of generalized wavelet and time-frequency systems that are based on suitable combinations of dilation and modulations, e.g. via the choice of certain \textit{cross-sections} (i.e. subsets) of $G_{aWH}$ that give rise to inversion formulae; see e.g. \cite{Torresani1}. 
Our paper contributes to this effort by focusing on \textit{translation complete subgroups}, i.e., subgroups $G$ of the form $G = \mathbb{T} \times \mathbb{R}^n \times V \times H$, with a subspace $V \subset \widehat{\mathbb{R}^n}$. These groups combine the full translation group (hence the terminology) with suitable choices of modulations along the subspace $V$ and dilations from the matrix group $H$. This setup covers the fully affine or generalized wavelet setting, corresponding to $V = \{0 \}$, as well as the Schr\"odinger representation underlying time-frequency analysis, which corresponds to choosing $V = \mathbb{R}^n$ and $H$ as trivial. These extreme cases have been studied quite extensively, see, e.g. \cite{Groechenig, Fuehr4}, and it is the main purpose of this paper to demonstrate that there is an easily accessible and rather large class of intermediate examples combining both nontrivial modulations and dilations, which give rise to wavelet-like expansions and inversion formulae.

\subsection{Overview and main contributions of the paper}

As stated above, we intend to provide a detailed analysis of translation complete subgroups, with the chief aim of understanding when their action via the (restriction of) the quasi-regular representation gives rise to a wavelet inversion formula based on the Haar measure of the group. More precisely, we address the following questions:
\begin{itemize}
\item Are there explicit characterizations of \textit{admissible vectors} that guarantee an inversion formula?
\item For which groups $G$ is the quasi-regular representation \textit{square-integrable}? \item For which groups $G$ is it a \textit{discrete series representation}?
\item How do our criteria relate to existing approaches studying similar questions, e.g. as im \cite{DeMari}?
\item Are there approaches to the systematic construction of such groups?
\item Are there novel examples of such groups in low dimensions?
\end{itemize}

As it turns out, the case of translation complete groups is quite similar to that of its well-understood subclass of \textit{affine groups}, i.e. those subgroups whose modulation part is trivial. The fact that all translations are included allows one to view the associated generalized wavelet transform associated to a general translation complete group as a family of convolution operators. This observation directly leads to the recognition that the reasoning developed for the affine case in \cite{Taylor, Fuehr4, Fuehr3} can be adapted to translation complete groups, essentially by replacing the linear action of the dilation group $H$ by a suitable \textit{dual action} of the semidirect product group $V \rtimes H^T$ on $\widehat{\mathbb{R}^n}$. This observation is a cornerstone of this paper and allows us to answer the questions posed above in the following way: 
\begin{itemize}
    \item The quasi-regular representation of $G$ is a discrete series representation if and only if the dual action of the semi-direct product $V \rtimes H^T$ has a unique open orbit, and in addition the stabilizer groups associated with the open orbit are compact (Corollary \ref{cor:char_discrete_series}). 
   \item There exists a Calder\'on-type admissibility condition based on integration along dual orbits (Proposition \ref{L3.1}). In the case of discrete series representations, this criterion can be expressed as the finiteness of a weighted $L^2$-norm in the Fourier domain (Corollary \ref{C3.1}) . 
    \item Weak admissibility of the group $G$, defined by the existence of a bounded \textit{injective} generalized wavelet transform $W_\psi: L^2(\mathbb{R}^n) \to L^2(G)$, is equivalent to the property that orbit space $\widehat{\mathbb{R}^n} /(V \rtimes H^T)$ is standard almost everywhere, and in addition, almost all stabilizers are compact (Theorem \ref{thm:char_weak_adm}).
    \item If $G$ is unimodular, there does not exist an \textit{isometric} wavelet transform $W_\psi: L^2(\mathbb{R}^n) \to L^2(G)$ (Corollary \ref{cor:unimod_nonadm}). In addition, the group does not admit discrete series subrepresentations (Corollary \ref{C.4.1}).
    \item $G$ is admissible if and only if it is weakly admissible and nonunimodular (Theorem \ref{thm:char_weak_adm}).
    \item It is possible to embed $G_{aWH}$ into the semidirect product $\mathbb{H}^n \rtimes Sp(n,\mathbb{R})$, and thereby identify the translation complete subgroups as a subclass of the groups studied in \cite{DeMari,Cordero,Tabacco} (see Section \ref{sect:metaplectic}). Furthermore, one can work out a direct connection between our admissibility conditions and the Wigner-function based criteria derived in \cite{DeMari}; see Proposition \ref{ref:prop_adm_cond_coincide}.
    \item Explicit examples in dimension two and three are presented in Section \ref{sect:examples}. 
\end{itemize}

Although our reasoning often follows the arguments for the affine case, as developed in \cite{Taylor, Fuehr4, Fuehr2,Fuehr3}, in most instances we have chosen to provide sufficient details to guarantee a self-contained and rigorous presentation. 




\section{Fundamentals of translation complete subgroups}

In this subsection, we establish the basic structures needed for harmonic analysis on translation complete subgroups $G = \mathbb{T} \times \mathbb{R}^n \times V \times H \subset G_{aWH}$, and the fundamental properties of $\pi|_G$. 

The first result determines which choices of $V$ and $H$ give rise to subgroups of $G_{aWH}$. It is easily verified by direct computation using (\ref{eqn:group_law}):
\begin{Proposition}
Let $V$ be a subspace of $\mathbb{R}^n$ and $H$ be a subgroup of ${\rm GL}(n, \mathbb{R})$. The set $\mathbb{T} \times \mathbb{R}^n \times V \times H$ is a subgroup of $\mathbb{T} \times \mathbb{R}^n \times \widehat{\mathbb{R}}^n \times {\rm GL}(n, \mathbb{R})$ if and only if $V$ is invariant under $H^T$, meaning that $h^T v \in V$ for all $h \in H$ and $v \in V$.
\end{Proposition}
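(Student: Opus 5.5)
We argue directly from the group law (\ref{eqn:group_law}), checking closure under products and inverses. Write $S = \mathbb{T} \times \mathbb{R}^n \times V \times H$. Since $\mathbb{T}$ and $\mathbb{R}^n$ appear in full in $S$, the first two coordinates of any product or inverse automatically lie in the right sets, and the fourth coordinate lies in $H$ because $H$ is a subgroup. Everything therefore reduces to the third (modulation) coordinate, which in a product is $\xi_1 + h_1^{-T}\xi_2$.

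For sufficiency, assume $h^T V \subset V$ for all $h \in H$. Since $H$ is a group, $h^{-1} \in H$ as well, so $h^{-T} = (h^{-1})^T$ also maps $V$ into $V$. Then for $\xi_1, \xi_2 \in V$ and $h_1 \in H$ the product coordinate $\xi_1 + h_1^{-T}\xi_2$ lies in $V$, because $V$ is a subspace. For inverses, solving $(z,x,\xi,h)(z',x',\xi',h') = (1,0,0,I_n)$ gives $h' = h^{-1}$, $x' = -h^{-1}x$ and $\xi' = -h^T \xi$, with $z'$ some element of $\mathbb{T}$. Here $\xi' \in V$ by the assumption, and $z'$ needs no check since $\mathbb{T}$ is taken in full. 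The identity $(1,0,0,I_n)$ clearly lies in $S$. Hence $S$ is a subgroup.

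For necessity, assume $S$ is a subgroup, and fix $h \in H$ and $v \in V$. The inverse of $(1,0,0,h) \in S$ lies in $S$, and by the computation above its modulation coordinate is $-h^T\cdot 0 = 0$. The clean approach is to invert $(1,0,v,h) \in S$ instead: its inverse has modulation coordinate $-h^T v$, and this must lie in $V$. Since $V$ is a subspace, it follows that $h^T v \in V$. (Alternatively, one can use the product $(1,0,0,h^{-1})(1,0,v,I_n)$, whose modulation coordinate is $h^T v$.)

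I expect no real obstacle here. The only point requiring care is the bookkeeping between $h^T$ and $h^{-T}$: one must use that $H$ is closed under inverses to see that invariance under $H^T$ is the same as invariance under $H^{-T}$.
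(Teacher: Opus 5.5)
Your proposal is correct and is exactly the direct verification from the group law (\ref{eqn:group_law}) that the paper intends: closure under products and inverses reduces to the modulation coordinate, and the computations (inverse with modulation part $-h^T\xi$, and $(1,0,0,h^{-1})(1,0,v,I_n)$ with modulation part $h^T v$) are correct. The aside about inverting $(1,0,0,h)$ yields no information and can simply be deleted.
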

\begin{example}
Known examples include $\mathbb{T} \times \mathbb{R}^n \times \widehat{\mathbb{R}}^n \times \{I_n\}$. Throughout this paper we will denote this group by $\mathbb{H}^n$, and call it the (reduced) \textit{Heisenberg group}. By slight abuse of notation, we will denote elements of $\mathbb{H}^n$ by $(z,x,\xi) \in \mathbb{T} \times \mathbb{R}^n \times \widehat{\mathbb{R}^n}$, i.e. we will omit the (trivial) matrix component, in settings where this notation is convenient and unambiguouos (predominantly in Section \ref{sect:metaplectic}). Another relevant subgroup is  $\{ 1 \} \times \mathbb{R}^n \times \{0\} \times \mathrm{GL}(n, \mathbb{R})$, which is isomorphic to the \textit{affine group} $\mathbb{R}^n \rtimes \mathrm{GL}(n, \mathbb{R})$.
\end{example}

An important byproduct of the proposition is the semi-direct product group $V \rtimes H^T$, which is well-defined whenever $V$ and $H$ define a translation complete subgroup. Formally, the group is defined as the set $V \times H$, endowed with the group law 
\begin{equation} \label{eqn:def_small_semi}
    (\xi_1,h_1) (\xi_2,h_2) = (\xi_1 + h_1^{-T} \xi_2, h_1 h_2)~. 
\end{equation}
Here $h^{-T}$ denotes the transpose inverse of $h$, and we identified $\xi_i \in \widehat{\mathbb{R}^n}$ with column vectors, on which the matrices act by multiplication. 
This group has a natural right action on the dual space $\widehat{\mathbb{R}^n}$, the so-called \textit{dual action} defined by
\begin{equation} \label{eqn:def_dual_action}
    \widehat{\mathbb{R}^n} \times (V \rtimes H^T) \to \widehat{\mathbb{R}^n}~,~ (\gamma, (\xi,h)) \mapsto \gamma . (\xi, h) = h^{-T}(\gamma-\xi)~,
\end{equation}
which will play a central role in the course of this paper. 

Any translation complete subgroup $G = \mathbb{T} \times \mathbb{R}^n \times V \times H$ can be written as $N \rtimes H^1$, where
\[
N = \mathbb{T} \times \mathbb{R}^n \times V \times \{I_n\},
\]
a nilpotent connected Lie group, and
\[
H^1 = \{1\} \times \{0\} \times \{0\} \times H.
\]
Clearly, $N \triangleleft G$, and $G = N \rtimes H^1$.

%

\subsection{Haar measure and modular function of translation complete subgroups}

We now determine a left Haar measure and the modular function of a general translation complete group. Note that the following results cover $G_{aWH}$ as a special case. All results are special cases of the general formulae describing Haar measures and modular functions of semidirect products, see \cite[15.29]{Hewitt}, but they can also be verified by explicit computation. 
\begin{lemma} \label{lem:Haar_meas_tc}
The Haar measure on a translation complete subgroup $\mathbb{T} \times \mathbb{R}^n \times V \times H < G_{aWH}$ is given by
\[
|\det(h^T|_V)| \, dz \, dx \, d\xi \, \frac{dh}{|\det(h)|},
\]
where $\det(h^T|_V)$ is the determinant of the map $h^T|_V: V \rightarrow V$ that sends each $v$ to $h^T v$. Here $dz,dx,d\xi$ denote the usual Lebesgue measures, normalized to one in the case of $dz$, and $dh$ denotes left Haar measure on $H$. 
\end{lemma}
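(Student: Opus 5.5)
We verify left invariance of the proposed measure directly, using the decomposition $G = N \rtimes H^1$ and the semidirect product Haar measure formula from \cite[15.29]{Hewitt}. First, $N = \mathbb{T}\times\mathbb{R}^n\times V$ is a nilpotent Lie group. Its product, restricted from (\ref{eqn:group_law}) with $h_1=h_2=I_n$, is
\[
(z_1,x_1,\xi_1)(z_2,x_2,\xi_2) = (z_1z_2e^{-2\pi i\langle \xi_1,x_2\rangle}, x_1+x_2, \xi_1+\xi_2).
\]
Left multiplication by $(z_1,x_1,\xi_1)$ translates $x$ and $\xi$. For fixed $(x_2,\xi_2)$ it also rotates $z$ by a constant times $z_1$. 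Since it is a skew translation with unit Jacobian, $dz\,dx\,d\xi$ is a left (indeed two-sided) Haar measure on $N$. Here $d\xi$ is Lebesgue measure on $V$.

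Next we compute how $h\in H$ acts on $N$ by conjugation, $\alpha_h(n) = (1,0,0,h)\,n\,(1,0,0,h)^{-1}$. From (\ref{eqn:group_law}),
\[
\alpha_h(z,x,\xi) = (z, hx, h^{-T}\xi).
\]
The phase term $e^{-2\pi i\langle 0,\cdot\rangle}$ vanishes from the left factor. On the right we need the inverse $(1,0,0,h^{-1})$, and tracking the phase gives $\langle h^{-T}\xi, h x'\rangle$-type terms with $x'=0$, so the $z$-coordinate is unchanged. The modulus of this automorphism relative to $dz\,dx\,d\xi$ is the Jacobian $|\det h|\cdot|\det(h^{-T}|_V)| = |\det h|\,/\,|\det(h^T|_V)|$. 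This uses the $H^T$-invariance of $V$ from the preceding Proposition, so that $h^{-T}|_V = (h^T|_V)^{-1}$ is well defined.

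The semidirect product formula states that on $N\rtimes H$, a left Haar measure is $\delta(h)^{-1}\,dn\,dh$, where $\delta(h)$ is the modulus of $\alpha_h$, i.e. $d(\alpha_h(n)) = \delta(h)\,dn$. Substituting gives the density $|\det(h^T|_V)|/|\det h|$, which is exactly the claimed measure.

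The main obstacle is getting the convention right: the power and sign of $\delta$, and whether $\alpha_h$ or $\alpha_{h^{-1}}$ appears. I would settle this by a direct check rather than quoting. Compute the product
\[
(z_1,x_1,\xi_1,h_1)(z,x,\xi,h) = (z_1 z\,e^{-2\pi i\langle\xi_1,h_1x\rangle},\, x_1+h_1x,\, \xi_1+h_1^{-T}\xi,\, h_1h),
\]
and change variables in $\int F(g_1g)\,d\mu(g)$:

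\begin{itemize}
\item The $z$-shift has unit Jacobian and preserves $dz$.
\item The substitution $x\mapsto x_1 + h_1x$ contributes $|\det h_1|^{-1}$.
\item The substitution $\xi\mapsto \xi_1+h_1^{-T}\xi$ on $V$ contributes $|\det(h_1^T|_V)|$.
\item Writing $h' = h_1h$, the weight becomes $|\det(h^T|_V)|/|\det h| = |\det(h'^T|_V)|\,|\det h_1|\,/\,\bigl(|\det(h_1^T|_V)|\,|\det h'|\bigr)$.
\item The $dh$ factor is left invariant on $H$.
\end{itemize}

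The factors $|\det h_1|^{\pm1}$ and $|\det(h_1^T|_V)|^{\pm1}$ cancel, which confirms left invariance.
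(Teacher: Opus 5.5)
Your proposal is correct and follows essentially the paper's route: the paper gives no detailed proof, only noting that the lemma is a special case of the semidirect product formula \cite[15.29]{Hewitt} or can be checked by explicit computation. You supply both arguments, with the correct automorphism $\alpha_h(z,x,\xi)=(z,hx,h^{-T}\xi)$, modulus $\delta(h)=|\det h|/|\det(h^T|_V)|$, and a change-of-variables check (with the $x$-dependent $z$-rotation handled first via Fubini) in which the factors $|\det h_1|^{\pm 1}$ and $|\det(h_1^T|_V)|^{\pm 1}$ cancel.
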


It is worth realizing that typically $\det(h^T|_V)$ will be different from $\det(h^T)$. The following example makes this observation explicit. 

\begin{example} \label{ex:block_structure}
Let $V = \mathbb{R}^m \times \{0_{n-m}\}$ be a proper subspace of $\mathbb{R}^n$, $0 < m < n$, with 
$\left( \begin{array}{c} v \\ 0 \end{array} \right) \in V$. We write a general matrix $h \in \text{GL}(n,\mathbb{R})$ in block matrix form
\[
 h = \left( \begin{array}{cc} h_{11} & h_{12} \\ h_{21} & h_{22} \end{array} \right)\, 
\] with $h_{11} \in M_{m \times m}(\mathbb{R})$, and the remaining matrices are of matching sizes. We want to clarify when $h^T V \subset V$ holds.  
By block matrix calculus, this is the case precisely when, for all $v \in \mathbb{R}^{m}$,
\[
H^T \left( \begin{array}{c} v \\ 0 \end{array} \right) \in V \Longleftrightarrow \left( \begin{array}{c} h_{11}^T \cdot v \\ h_{12}^T \cdot v \end{array} \right) = \left( \begin{array}{c} v' \\ 0 \end{array} \right) 
\] and this holds if and only if $h_{12} = 0$.
Therefore,
\[
h^T = \left( \begin{array}{cc} h_{11}^T & h_{21}^T \\ 0 & h_{22}^T \end{array} \right) \quad \text{and} \quad h^T|_V = \left( \begin{array}{c} h_{11}^T \cdot v \\ 0 \end{array} \right).
\]
Obviously, $\det(h^T|_V) = \det(h_{11})$, while $\det(h) = \det(h_{11}) \det(h_{22})$.
\end{example}

We also determine the modular function and Haar integral of the semidirect product $V \rtimes H^T$. 
%
{\lemma \label{MF}The modular function of $V\rtimes H^T$ is given by
\[
 \Delta_{V\rtimes H^T}(\xi,h)=\frac{|{\rm det} (h^T|_V)|}{\Delta_H(h)},
\]
where  $\Delta_H(h)$ denotes the modular function of $H$. The left Haar measure is given by 
\[
d(\xi,h) =  |det(h^T|_V)| d\xi dh
\] with $d\xi$ denoting Lebesgue measure on the vector space $V$ and $dh$ denoting left Haar measure on $H$. 
}
%

\subsection{Generalized wavelet transforms and matrix coefficients}
Matrix coefficients are among the chief objects in abstract harmonic analysis. Given any functions $f,g \in L^2(\mathbb{R}^n)$, we let 
\[
V_f g: G_{aWH} \to \mathbb{C}~,~V_f g(\mathbf{x}) = \langle g, \pi(\mathbf{x}) f \rangle~.
\] By a slight abuse of notation, we also write $V_f g$ for the restriction of this function to any suitable subgroup. Fixing $f$ results in an operator 
\[
V_f : L^2(\mathbb{R}^n) \to C_b(G_{aWH})~,
\] where the right-hand side denotes the space of continuous and bounded functions on $G_{aWH}$. $V_f$ is called the \textbf{generalized wavelet transform} associated with $f$. By construction, this operator intertwines the action of $\pi$ with left translation on functions on $G_{aWH}$. With this notation, we can now define the central notions of this paper. 
\begin{definition} \label{def:admissible}
    Let $G < G_{aWH}$ denote a closed subgroup, and $\mathcal{H} \subset L^2(\mathbb{R}^n)$ a closed subspace that is invariant under $\pi|_G$. Let $f \in \mathcal{H}$.
    \begin{enumerate}
    \item[(a)] $f$ is called \textbf{weakly admissible for $\mathcal{H}$} if $V_f : \mathcal{H} \to L^2(G)$ is a bounded injective operator. 
    \item[(b)] $f$ is called \textbf{admissible for $\mathcal{H}$} if $V_f: \mathcal{H} \to L^2(G)$ is the nontrivial multiple of an isometry. 
    \item[(c)] $G$ is called \textbf{(weakly) admissible for $\mathbf{\mathcal{H}}$} if there exists at least one (weakly) admissible vector. In this case, the representation $\pi|_G$ is called \textbf{(weakly) square-integrable}. 
    \item[(d)] If $\pi|_G$, acting on $\mathcal{H}$, is square-integrable and irreducible, it is called a \textbf{discrete series representation}.
    \end{enumerate}
    The explicit reference to $\mathcal{H}$ will be omitted if $\mathcal{H} = L^2(\mathbb{R}^n)$.
\end{definition}
A major goal of this paper is to provide explicit criteria for the various properties.
The terminology is taken from \cite{Fuehr4}. We recall several well-known related facts and refer to \cite{Fuehr4} for proofs.
\begin{remark}
 Under the assumptions of 
    \begin{enumerate}
        \item[(a)] If $f$ is admissible, then one has the \textbf{inversion formula}
        \[
        g = \frac{1}{C_f} \int_{H} V_f g (\mathbf{x}) \pi(\mathbf{x}) f ~d\mu_G(\mathbf{x})~,
        \] holding in the weak operator sense. 
        \item[(b)] If the restriction of $\pi$ to $G$, acting on $\mathcal{H}$, is \textbf{irreducible}, then the admissibility of $0 \not= f$ is equivalent to 
        \[
        \int_G \left| V_f f(\mathbf{x}) \right|^2 d\mu_G(\mathbf{x}) < \infty~.
        \]
        \item[(c)] $\pi|_G$ is weakly square-integrable for $\mathcal{H}$ if and only if its action on $\mathcal{H}$ is unitarily equivalent to a subrepresentation of the regular representation of $G$. 
     \end{enumerate}
\end{remark}

\subsection{Invariant subspaces} 
The next step is to characterize closed subspaces that are invariant under a given translation complete subgroup $G$. Here, the advantage of translation completeness becomes apparent for the first time, since it opens the door to a systematic use of the Euclidean Plancherel transform. Also, the dual action will have its first relevant appearance in the paper. 

We start by computing the action of $\pi$ on the Plancherel transform side, obtaining the following formula for all $(z, x, \xi, h) \in G$ and $f \in \mathrm{L}^2(\mathbb{R}^n)$:
\begin{equation}\label{E.3.2}
\begin{aligned}
\widehat{(\pi(z, x, \xi, h)f)}(\gamma) &= z M_{-x} T_\xi D_{h^{-T}} \widehat{f}(\gamma) \\
&= | \det(h) |^{1/2} z e^{-2 \pi i \langle \gamma, x \rangle} \widehat{f}(h^{-T}(\gamma - \xi)),
\end{aligned}
\end{equation}
due to the equations $\widehat{T_x f} = M_{-x} \widehat{f}$, $\widehat{M_\xi f} = T_\xi \widehat{f}$, and $\widehat{D_h f} = D_{h^T} \widehat{f}$. Note that the action in the argument on the right hand side is precisely the right action of the semidirect product $V \rtimes H^T$ that was mentioned above.

The following theorem, a generalization of Theorem 2 in \cite{Hartmut}, demonstrates that any nontrivial invariant subspace corresponds to a subset $U \subset \widehat{\mathbb{R}^n}$ that is $V \rtimes H^T$-invariant.  
For its formulation, we introduce one further piece of notation: Given any Borel set,
$U \subset \widehat{\mathbb{R}^n}$ be a measurable set. Define
    \[
    \mathcal{H}_U = \{ f \in {\rm L}^2(\mathbb{R}^n) : \widehat{f}(\gamma) = 0 \text{ for almost all } \gamma \text{ outside of } U \}.
    \] Then $\mathcal{H}_U$ is a closed subspace of $L^2(\mathbb{R}^n)$.

\begin{theorem} Let $G = \mathbb{T} \times \mathbb{R}^n \times V \times H$ denote a translation complete subgroup of $G_{aWH}$.
    Let $\mathcal{H} \subset L^2(\mathbb{R}^n)$ denote a closed subspace. Then $\mathcal{H}$ is invariant under $G$ if and only if $\mathcal{H} = \mathcal{H}_U$, for some Borel subset $U \subset \mathbb{R}^n$ that is invariant under the action of $V \rtimes H^T$. 
\end{theorem}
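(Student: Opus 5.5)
We pass to the Fourier side throughout. By the Plancherel theorem, $\mathcal{H}$ is a closed $\pi|_G$-invariant subspace of $L^2(\mathbb{R}^n)$ exactly when $\widehat{\mathcal{H}}$ is a closed subspace of $L^2(\widehat{\mathbb{R}^n})$ invariant under the operators in (\ref{E.3.2}). The proof has two stages. First we use only the translations $(1,x,0,I_n)$ to show that $\mathcal{H}=\mathcal{H}_U$ for some Borel set $U$. Then we use the modulations along $V$ and the dilations from $H$ to show that $U$ can be taken $V\rtimes H^T$-invariant. The converse direction is a direct computation.

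For the first stage, translation invariance of $\mathcal{H}$ means that $\widehat{\mathcal{H}}$ is invariant under multiplication by all characters $e^{-2\pi i\langle\cdot,x\rangle}$. Let $P$ be the orthogonal projection onto $\widehat{\mathcal{H}}$. Then $P$ commutes with all these multiplication operators, hence with the von Neumann algebra they generate. That algebra is $L^\infty(\widehat{\mathbb{R}^n})$ acting by multiplication: by Stone--Weierstrass and Fourier density, the weak-$*$ closure of the span of the characters is all of $L^\infty$. Since $L^\infty$ is maximal abelian, $P$ is itself a multiplication operator, and being a projection it equals multiplication by $\mathbf{1}_U$ for a Borel set $U$. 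This gives $\widehat{\mathcal{H}}=\{F: F=0 \text{ a.e. off } U\}$, i.e. $\mathcal{H}=\mathcal{H}_U$. The set $U$ is determined only up to null sets; this is the classical Wiener-type argument, as in \cite{Hartmut}.

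For the second stage, let $\rho(\xi,h)$ denote the operator $F\mapsto|\det h|^{1/2}F(\gamma.(\xi,h))$ from (\ref{E.3.2}). Invariance of $\mathcal{H}_U$ under $\rho(\xi,h)$ translates into $\rho(\xi,h)P\rho(\xi,h)^{-1}=P$. The conjugate is multiplication by $\mathbf{1}_{U.(\xi,h)^{-1}}$, so $U.(\xi,h)^{-1}$ and $U$ agree up to a null set. Because the dual action maps null sets to null sets (it is affine and invertible), this shows that for each group element $g\in V\rtimes H^T$, the sets $U.g$ and $U$ differ by a null set.

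The main obstacle is upgrading this almost invariance to genuine invariance. My plan is to replace $U$ by
\[
U' = \{\gamma : (\gamma,g)\mapsto \mathbf{1}_U(\gamma.g) \text{ equals } 1 \text{ for Haar-a.e. } g\}.
\]
By construction $U'$ is exactly invariant, because left Haar measure is invariant under left translation. Fubini applied to the measurable map $(\gamma,g)\mapsto \mathbf{1}_U(\gamma.g)$ shows that $U'$ is Borel and that $U'\triangle U$ is null. Here $V\rtimes H^T$ is second countable, since $H$ is a closed subgroup of $\mathrm{GL}(n,\mathbb{R})$. In particular $\mathcal{H}_{U'}=\mathcal{H}_U$.

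For the converse, if $U$ is invariant then $\mathcal{H}_U$ is clearly invariant under the characters, since these preserve support. It is also invariant under $\rho(\xi,h)$: the function $\widehat{f}(\gamma.(\xi,h))$ vanishes whenever $\gamma.(\xi,h)\notin U$, which by invariance is equivalent to $\gamma\notin U$, again up to null sets because the action preserves null sets.
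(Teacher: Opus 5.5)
Your proposal is correct and follows the same route as the paper: use translation invariance to get $\mathcal{H}=\mathcal{H}_U$, deduce that $U.g$ and $U$ agree up to null sets for every $g\in V\rtimes H^T$, then pass to a genuinely invariant Borel representative. The paper cites two results for this: the classical description of translation-invariant subspaces, and Mackey's selection lemma for the final step. You instead prove both directly, via the maximal abelian property of $L^\infty$ and an explicit Fubini construction of $U'$; the latter works because Haar measure is $\sigma$-finite and Haar-null sets are stable under translation.
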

\begin{proof}
    The proof is similar to the proof of Theorem 2 in \cite{Hartmut}. It is well-known that any fully translation-invariant subspace $\mathcal{H}$ is of the form $\mathcal{H} = \mathcal{H}_U$, for some Borel subset $U$; see e.g. \cite[9.16]{MR924157}. However, to further ensure invariance under $V \rtimes H^T$, we need in addition that $U.(\xi,h)$  and $U$ only differ by a null set, for all $(\xi,h) \in V \rtimes H^T$. Now a selection lemma due to Mackey \cite[Theorem 3]{MR143874} ensures the existence of a Borel set $U'$ that differs from $U$ only by a null set, and additionally $U'$ is $V \rtimes H^T$-invariant. This shows $\mathcal{H} = \mathcal{H}_U = \mathcal{H}_{U'}$, and $\mathcal{H}_{U'}$ is as desired. 
    
    The converse statement can be checked directly. 
\end{proof}
From now on, given any measurable $V \rtimes H^T$-invariant subset $U \subset \widehat{\mathbb{R}^n}$, we let $\pi_U$ be the restriction of $\pi$ to $\mathcal{H}_U$. Our next aim is to characterize irreducibility of $\pi_U$ by properties of $U$. 
For this purpose, 
  recall that a measure space carrying a measurable group action is called \textit{ergodic} if it cannot be decomposed into two invariant subsets of positive measure. Now the correspondence between invariant closed subspaces and invariant Borel sets established in the previous theorem immediately entails the following characterization of irreducibility. It generalizes \cite[Corollary 4]{Hartmut}.
 
\begin{Corollary} \label{cor:irred}
    Let $G = \mathbb{T} \times \mathbb{R}^n \times V \times H \subset G_{aWH}$ denote a translation complete subgroup, and let $U \subset \widehat{\mathbb{R}^n}$ denote a  Borel set of positive measure that is $V \rtimes H^T$. Then $\pi_U$ is irreducible if and only if $V \rtimes H^T$ acts ergodically on subset $U$.
\end{Corollary}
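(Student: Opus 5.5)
The plan is to reduce irreducibility of $\pi_U$ to the correspondence between invariant closed subspaces and invariant Borel sets given by the preceding theorem. The one extra observation needed is that this correspondence works inside $\mathcal{H}_U$ just as it does in all of $L^2(\mathbb{R}^n)$. We also use that $\mathcal{H}_{U_1} = \mathcal{H}_{U_2}$ holds if and only if $U_1$ and $U_2$ differ by a null set. This is immediate from Plancherel: the projection onto $\mathcal{H}_{U_i}$ is the Fourier multiplier with symbol $\mathbf{1}_{U_i}$, and two multipliers agree exactly when their symbols agree almost everywhere.

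For the implication from non-ergodic to reducible, suppose $U = U_1 \cup U_2$ is a disjoint union of $V \rtimes H^T$-invariant Borel sets, each of positive measure. By the converse direction of the theorem, $\mathcal{H}_{U_1}$ is a closed $G$-invariant subspace of $\mathcal{H}_U$. Because $U_1$ and $U_2$ both have positive measure, $\mathcal{H}_{U_1}$ is neither $\{0\}$ nor $\mathcal{H}_U$. Hence $\pi_U$ is reducible.

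For the converse, let $\{0\} \neq \mathcal{K} \subsetneq \mathcal{H}_U$ be a closed $G$-invariant subspace. Since $\mathcal{K}$ is also a closed $G$-invariant subspace of $L^2(\mathbb{R}^n)$, the theorem gives $\mathcal{K} = \mathcal{H}_W$ for some invariant Borel set $W$. From $\mathcal{H}_W \subset \mathcal{H}_U$ we get that $W \setminus U$ is null, so we may replace $W$ by $W \cap U$. This set is still invariant, being an intersection of invariant sets, and it defines the same subspace. Then $U = (W\cap U) \cup (U \setminus W)$ is a decomposition into invariant Borel sets. The set $W \cap U$ has positive measure because $\mathcal{K} \neq \{0\}$, and $U\setminus W$ has positive measure because $\mathcal{K} \neq \mathcal{H}_U$. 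So the action on $U$ is not ergodic.

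The only point needing care is that ergodicity must be read modulo null sets, with the invariant pieces taken strictly invariant and Borel. The theorem already supplies strictly invariant Borel representatives through Mackey's selection lemma, so no further obstacle arises. If one instead defines ergodicity using sets that are only almost invariant, the same Mackey argument converts them to strictly invariant ones, and the equivalence is unchanged.
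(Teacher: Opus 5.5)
Your proposal is correct and takes the same route as the paper. The paper simply records the corollary as an immediate consequence of the preceding theorem's correspondence between closed $G$-invariant subspaces and $V \rtimes H^T$-invariant Borel sets. You have spelled out the details it leaves implicit: that $\mathcal{H}_{U_1}=\mathcal{H}_{U_2}$ exactly when $U_1$ and $U_2$ differ by a null set, and the replacement of $W$ by the strictly invariant set $W\cap U$ to split $U$ into two invariant pieces of positive measure.
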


\section{Criteria for admissible vectors}

Now we set about calculating the $L^2$-norm of $V_\psi f$. As in the previous sections, we fix a translation complete subgroup $G = \mathbb{T} \times \mathbb{R}^n \times V \times H \subset G_{aWH}$. The following proposition computes the $L^2$-norm of a wavelet coefficient; it is once more an adaptation from the affine setting \cite{Taylor,Hartmut}.
\begin{Proposition}\label{L3.1}
Let $G = \mathbb{T} \times \mathbb{R}^n \times V \times H$ be a translation complete subgroup of $G_{aWH}$ and $\psi, f \in \mathrm{L}^2(G)$. Then,
\begin{equation} \label{eqn:norm_mc}
\|V_\psi f\|_{L^2(G)}^2 = \int_{\mathbb{R}^n} |\widehat{f}(\omega)|^2 \int_V \int_H |\widehat{\psi}(h^T(\omega - \xi))|^2 |{\det}(h^T|_V)| \, dh \, d\xi \, d\omega\,\,
\end{equation} in the extended sense that one side is infinite if and only if the other one is. 
Define $\Phi_\psi: \mathbb{R}^n \rightarrow \mathbb{R}^+$ by
\[ 
\Phi_\psi(\omega) := \int_V \int_H |\widehat{\psi}(h^T(\omega - \xi))|^2 \, d(h, \xi),
\]
where $d(h, \xi) = |{\det}(h^T|_V)| \, dh \, d\xi$. Then,
\begin{equation} \label{eqn:norm_mc2}
\|V_\psi f\|_{L^2(G)}^2 = \int_{\mathbb{R}^n} |\widehat{f}(\omega)|^2 \Phi_\psi(\omega) \, d\omega,
\end{equation} again including the case that both sides are infinite. 
\end{Proposition}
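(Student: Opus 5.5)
The plan is the standard Plancherel argument from the affine case, done in the order of integration suggested by the Haar measure of Lemma \ref{lem:Haar_meas_tc}. (I read $\psi, f \in L^2(\mathbb{R}^n)$ here; $L^2(G)$ in the statement is presumably a typo.)

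\textbf{Step 1: the $z$-variable.} Write
\[
V_\psi f(z,x,\xi,h) = \langle f, \pi(z,x,\xi,h)\psi\rangle = \langle \widehat f, \widehat{\pi(z,x,\xi,h)\psi}\rangle .
\]
Since $|z|=1$, the torus variable drops out of $|V_\psi f|^2$, and the normalized $dz$ integrates to $1$.

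\textbf{Step 2: the $x$-variable, for fixed $(\xi,h)$.} By (\ref{E.3.2}),
\[
V_\psi f(1,x,\xi,h) = |\det h|^{1/2}\int_{\widehat{\mathbb{R}^n}} \widehat f(\gamma)\,\overline{\widehat\psi(h^{-T}(\gamma-\xi))}\, e^{2\pi i\langle \gamma,x\rangle}\,d\gamma .
\]
This is the inverse Fourier transform, evaluated at $x$, of the function
\[
F_{\xi,h}(\gamma) = |\det h|^{1/2}\,\widehat f(\gamma)\,\overline{\widehat\psi(h^{-T}(\gamma-\xi))}.
\]
The function $F_{\xi,h}$ lies in $L^1$, being a product of two $L^2$ functions. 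The Plancherel theorem, in its extended form, then gives
\[
\int_{\mathbb{R}^n}|V_\psi f(1,x,\xi,h)|^2\,dx = \|F_{\xi,h}\|_2^2
\]
as an identity in $[0,\infty]$. Here one uses that an $L^1$ function has $L^2$ Fourier transform if and only if it is itself in $L^2$, with equal norms.

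This is the step needing the most care, because $F_{\xi,h}$ need not be in $L^2$ a priori. I would handle it by noting that for $\phi\in L^1$ one has $\widehat\phi\in L^2$ if and only if $\phi\in L^2$. This follows by approximating with $\phi\cdot e^{-\epsilon|\cdot|^2}$, or equivalently from the convolution form $V_\psi f(\cdot,\xi,h) = f * \widetilde{\psi_{\xi,h}}$. That justifies the equality including the infinite case.

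\textbf{Step 3: Tonelli.} Combining the previous steps with Lemma \ref{lem:Haar_meas_tc} and Tonelli (all integrands are nonnegative and measurable), we get
\[
\|V_\psi f\|^2_{L^2(G)} = \int_H\int_V\int_{\mathbb{R}^n} |\widehat f(\gamma)|^2\,|\widehat\psi(h^{-T}(\gamma-\xi))|^2\,|\det h|\,|\det(h^T|_V)|\,d\gamma\,d\xi\,\frac{dh}{|\det h|}.
\]
The factors $|\det h|$ cancel. Exchanging the order once more to put $d\gamma$ outside gives
\[
\|V_\psi f\|^2_{L^2(G)} = \int |\widehat f(\gamma)|^2 \int_V\int_H |\widehat\psi(h^{-T}(\gamma-\xi))|^2\,|\det(h^T|_V)|\,dh\,d\xi\,d\gamma .
\]
Tonelli gives equality in $[0,\infty]$, which is exactly the "one side infinite iff the other" clause.

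\textbf{Step 4: matching the stated form.} The statement has $h^T$ in place of $h^{-T}$, so the remaining point is reconciling this.

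One route is the substitution $h\mapsto h^{-1}$ on $H$. This turns left Haar measure into right Haar measure, $d(h^{-1}) = \Delta_H(h)^{-1}\,dh$, and replaces $|\det(h^T|_V)|$ by $|\det(h^T|_V)|^{-1}$. The resulting weight is $1/\Delta_{V\rtimes H^T}$ from Lemma \ref{MF}, which is the expression one would get by writing the inner integral as an integral over $V\rtimes H^T$ against right Haar measure.

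So I would either record the formula with $h^{-T}$, or perform this inversion and check that the stated density $|\det(h^T|_V)|\,dh\,d\xi$ agrees, possibly up to the paper's convention for $dh$. This bookkeeping of the parametrization is where I would be most careful. Once it is settled, (\ref{eqn:norm_mc2}) is simply the definition of $\Phi_\psi$ substituted into (\ref{eqn:norm_mc}).
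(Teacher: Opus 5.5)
The gap is in Step~2, and it is what creates the mismatch you try to repair in Step~4. You took the argument $h^{-T}(\gamma-\xi)$ from the second line of (\ref{E.3.2}), but that line is misprinted. Substituting $y=hu$ in the Fourier integral gives $\widehat{D_h\psi}(\gamma)=|\det h|^{1/2}\,\widehat\psi(h^T\gamma)$. So the first line $zM_{-x}T_\xi D_{h^{-T}}\widehat\psi$ of (\ref{E.3.2}) actually evaluates to
\[
\widehat{\pi(z,x,\xi,h)\psi}(\gamma)=z\,|\det h|^{1/2}\,e^{-2\pi i\langle \gamma,x\rangle}\,\widehat\psi\bigl(h^T(\gamma-\xi)\bigr).
\]
Hence the identity at the end of your Step~3, with $\widehat\psi(h^{-T}(\gamma-\xi))$ integrated against $|\det(h^T|_V)|\,dh\,d\xi$, is false in general, and Step~4 cannot rescue it. The substitution $h\mapsto h^{-1}$ produces the $h^T$-form only with weight $\bigl(|\det(h^T|_V)|\,\Delta_H(h)\bigr)^{-1}$, not the stated $|\det(h^T|_V)|$. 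These differ by the factor $|\det(h^T|_V)|^2\Delta_H(h)$, and no convention for $dh$ removes it.

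Concretely, take Example~\ref{ex:dim_two} with $c=0$, so $h_t=\mathrm{diag}(e^{ta},e^t)$, $H$ is abelian, and $|\det(h_t^T|_V)|=e^{ta}$. Put $F(s)=\int_{\mathbb{R}}|\widehat\psi(u,s)|^2\,du$. The correct inner integral is $\int_{\mathbb{R}}F(e^t\omega_2)\,dt=\int_0^\infty F(s)\,\frac{ds}{s}$, which is constant on $\{\omega_2>0\}$. Yours equals $\int_{\mathbb{R}}F(e^{t}\omega_2)\,e^{-2ta}\,dt=\omega_2^{2a}\int_0^\infty F(s)s^{-2a}\,\frac{ds}{s}$ there. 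For $a\neq 0$ this is never a finite positive constant, so it would wrongly rule out the discrete series found in that example.

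Once Step~2 is corrected, your Steps~1--3 yield (\ref{eqn:norm_mc}) directly and Step~4 disappears. The argument is then precisely the paper's computation: Plancherel in $x$ for fixed $(\xi,h)$, cancellation of $|\det h|$ against the density of Lemma~\ref{lem:Haar_meas_tc}, then Tonelli. Your remark that an $L^1$ function has an $L^2$ Fourier transform if and only if it lies in $L^2$, with equal norms, is a correct way to handle the case of infinite norms, which the paper leaves implicit.
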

 
\begin{proof}
We use the Haar measure determined in Lemma \ref{lem:Haar_meas_tc} to compute for every $\psi, f\in {\rm L}^2(\mathbb{R}^n)$ \\
\begin{equation*}
\begin{aligned} 
 ||V_\psi f||_{L^2(G)}^2 &=&& \int_G |\langle f, \pi(z,x,\xi,h)\psi\rangle|^2  d\mu_G(z,x,\xi,h)\\
 &=&&\int_{\mathbb{R}^n} \int_{V} \int_{H} |\langle f, T_xM_\xi D_h \psi\rangle|^2 |{\rm det}(h^T|_V)|\frac{dh}{|{\rm det}(h)|}d\xi dx \\
 &=&& \int_{V} \int_{H}\int_{\mathbb{R}^n}|\langle \widehat f, \widehat{T_xM_\xi D_h \psi}\rangle|^2 dx |{\rm det}(h^T|_V)|\frac{dh}{|{\rm det}(h)|}d\xi\\
 &=&&\int_{V} \int_{H}\int_{{\mathbb{R}^n}} |\langle \widehat f,M_{-x} T_\xi D_{h^{-T}}\widehat{\psi}\rangle|^2 dx |{\rm det}(h^T|_V)|\frac{dh}{|{\rm det}(h)|}d\xi\\
  &=&&\int_{V} \int_{H}\int_{{\mathbb{R}^n}} |\int_{{\mathbb{R}^n}} \widehat f(\omega) e^{-2\pi i \omega x} T_\xi D_{h^{-T}}\widehat{\overline{\psi}}(\omega)d\omega|^2 dx |{\rm det}(h^T|_V)|\frac{dh}{|{\rm det}(h)|}d\xi\\
 &=&&\int_{V} \int_{H}\int_{{\mathbb{R}^n}} | \mathcal{F}(\widehat f\cdot T_\xi D_{h^{-T}}\widehat{\overline{\psi}})(x)|^2 dx |{\rm det}(h^T|_V)|\frac{dh}{|{\rm det}(h)|}d\xi\\
 &=&& \int_{{\mathbb{R}^n}} |\widehat f(x)|^2 \int_V \int_H |T_\xi D_{h^{-T}}\widehat{\psi}(x)|^2 |{\rm det}(h^T|_V)|\frac{dh}{|{\rm det}(h)|}d\xi dx\\
  &=&& \int_{{\mathbb{R}^n}} |\widehat f(x)|^2 \int_V \int_H |{\rm det}(h)||T_\xi \widehat{\psi}(h^T x)|^2 |{\rm det}(h^T|_V)|\frac{dh}{|{\rm det}(h)|}d\xi dx\\
   &=&& \int_{{\mathbb{R}^n}} |\widehat f(x)|^2 \int_V \int_H | \widehat{\psi}(h^T (x-\xi))|^2 |{\rm det}(h^T|_V)|dhd\xi dx\\
   &=&&\int_{\mathbb{R}^n} |\widehat{f}(\omega)|^2 \Phi_\psi(\omega)d\omega,
\end{aligned}
\end{equation*}
where $\mathcal{F}$ denotes the Fourier transform.  The computation uses several well-known properties of the Fourier transform, such as its unitarity and its interplay with the representation $\pi$ (see equation (\ref{E.3.2})), but also the normalization of Haar measure on $\mathbb{T}$. This proves (\ref{eqn:norm_mc}), and (\ref{eqn:norm_mc2}) is just a reformulation.  
\end{proof}
Now straightforward measure-theoretic arguments allow to derive the following characterization of (weakly) admissible vectors. 
\begin{Corollary}\label{C3.1}
Let $G = \mathbb{T} \times \mathbb{R}^n \times V \times H$ be a subgroup of $G_{aWH}$, and assume that $\mathcal{H} = \mathcal{H}_U$ is a $G$-invariant closed subspace of $L^2(\mathbb{R}^n)$. Then $\psi \in \mathcal{H}_U$ is \textbf{weakly admissible} if and only if there is a constant $C_\psi>$ such that the auxiliary function 
\begin{equation}\label{eqn:Phipsi}
 \Phi_\psi(\omega) = \int_V \int_H |\widehat{\psi}(h^T(\omega-\xi))|^2 |{\det}(h^T|_V)| \, dh \, d\xi
\end{equation}
fulfills 
\[
0 < \Phi_\psi(\omega) < C_\psi~,\mbox{a.e. } \omega \in U
\] Furthermore, $\psi$ is \textbf{admissible} if and only if 
\begin{equation}\label{E3.1}
 \Phi_\psi(\omega) =  C_\psi~, \mbox{a.e. } \omega \in U,
\end{equation}
for a constant $C_\psi>0$.
\end{Corollary}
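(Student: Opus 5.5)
The whole argument runs through Proposition \ref{L3.1}: for every $f \in \mathcal{H}_U$,
\[
\|V_\psi f\|_{L^2(G)}^2 = \int_U |\widehat f(\omega)|^2 \Phi_\psi(\omega)\,d\omega ,
\]
where restricting the integral to $U$ is allowed because $\widehat f$ vanishes almost everywhere outside $U$. So $V_\psi$, viewed on $\mathcal{H}_U$, is governed by the multiplication operator with $\Phi_\psi$ on $L^2(U)$, conjugated by the Plancherel transform. Each claim then becomes a statement about multiplication operators: boundedness means $\Phi_\psi \in L^\infty(U)$, injectivity means $\Phi_\psi>0$ almost everywhere on $U$, and isometry up to a constant means $\Phi_\psi$ is constant almost everywhere.

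\textbf{Weak admissibility.} First assume $0<\Phi_\psi<C_\psi$ a.e.\ on $U$. Then $\|V_\psi f\|^2 \le C_\psi \|\widehat f\|^2 = C_\psi\|f\|^2$ by Plancherel, so $V_\psi$ is bounded. If $V_\psi f = 0$, then $\int_U |\widehat f|^2\Phi_\psi = 0$; since $\Phi_\psi>0$ a.e.\ on $U$, this forces $\widehat f = 0$ a.e.\ on $U$, hence $f=0$.

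For the converse, boundedness gives $\int_U |\widehat f|^2 \Phi_\psi \le \|V_\psi\|^2 \int_U |\widehat f|^2$ for all $f\in\mathcal{H}_U$. Suppose the set $E_k=\{\omega\in U: \Phi_\psi(\omega)>k\}$ has positive measure for some $k>\|V_\psi\|^2$. Choose a subset of $E_k$ of finite positive measure, and take $\widehat f$ to be its indicator. The corresponding $f$ lies in $\mathcal{H}_U$ and violates the inequality. Hence $\Phi_\psi\le \|V_\psi\|^2$ a.e.\ on $U$, and any $C_\psi>\|V_\psi\|^2$ works. Similarly, if $Z=\{\omega\in U:\Phi_\psi(\omega)=0\}$ had positive measure, the indicator of a finite-measure subset of $Z$ would give a nonzero $f\in\mathcal{H}_U$ with $V_\psi f=0$, contradicting injectivity.

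\textbf{Admissibility.} If $\Phi_\psi = C_\psi>0$ a.e.\ on $U$, the displayed formula gives $\|V_\psi f\|^2 = C_\psi\|f\|^2$, so $V_\psi$ is $\sqrt{C_\psi}$ times an isometry. Conversely, suppose $\|V_\psi f\|^2 = c\|f\|^2$ with $c>0$. Then $\int_U |\widehat f|^2(\Phi_\psi - c)=0$ for all $f\in\mathcal{H}_U$. Testing with indicators of finite-measure subsets of $\{\Phi_\psi>c\}\cap U$ and of $\{\Phi_\psi<c\}\cap U$ shows that both sets are null, so $\Phi_\psi=c$ a.e.\ on $U$.

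The main technical point is measurability of $\Phi_\psi$, together with allowing it to take the value $\infty$, so that the level sets above are Borel. This follows from Tonelli's theorem, since the integrand $(\omega,\xi,h)\mapsto|\widehat\psi(h^T(\omega-\xi))|^2|\det(h^T|_V)|$ is a nonnegative measurable function on the product space. The only other ingredient needed is that every $L^2$ function supported in $U$ is the Fourier transform of an element of $\mathcal{H}_U$, which holds by Plancherel. One also has to read the "extended sense" clause of Proposition \ref{L3.1} correctly: for the indicator test functions, if $\Phi_\psi$ were infinite on a set of positive measure, that clause would give $\|V_\psi f\|=\infty$, contradicting boundedness.
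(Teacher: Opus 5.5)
Your proposal is correct and takes the same route as the paper. Both derive everything from the norm identity of Proposition~\ref{L3.1}, $\|V_\psi f\|_{L^2(G)}^2=\int_U |\widehat f(\omega)|^2\Phi_\psi(\omega)\,d\omega$ for $f\in\mathcal{H}_U$, so that boundedness, injectivity and isometry up to a positive constant become $\Phi_\psi\in L^\infty(U)$, $\Phi_\psi>0$ a.e.\ on $U$, and $\Phi_\psi$ constant a.e.\ on $U$. The paper only states the admissibility equivalence and calls the weak case analogous, while you supply the indicator-function test arguments and the measurability remark that make both equivalences rigorous.
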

\begin{proof}
By the lemma and definition above, for all \( f \in \mathrm{L}^2(\mathbb{R}^n) \) we have
\[
 \|V_\psi f\|_{L^2(G)}^2 = c \|f\|_{L^2(\mathbb{R}^n)}^2 \Longleftrightarrow \Phi_\psi \stackrel{\text{a.e.}}{=} c.
\] This establishes the admissibility criterion (\ref{E3.1}). The condition for weak admissibility is proved analogously. 
\end{proof}

We note an important consequence of the required finiteness of $\Phi_\psi$, which follows from \cite[Lemma 11]{Hartmut}. 
\begin{Corollary} \label{cor:char_adm}
Let $G = \mathbb{T} \times \mathbb{R}^n \times V \times H < G_{aWH}$ be a translation complete subgroup and $\mathcal{H} = \mathcal{H}_U$ a $G$-invariant closed subspace. Assume that $G$ is weakly admissible for $\mathcal{H}_U$. Then, for almost all $\omega \in U$, the associated stabilizer
\[
(V \rtimes H^T)_{\omega} = \{ (\xi,h) \in V \rtimes H^T ~:~ h^T(\omega - \xi) = \omega \}
\] is compact. 
\end{Corollary}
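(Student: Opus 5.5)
Weak admissibility gives a vector $\psi \in \mathcal{H}_U$ such that, by Corollary \ref{C3.1}, $0<\Phi_\psi(\omega)<C_\psi$ for almost every $\omega \in U$. Write $K = V \rtimes H^T$ with left Haar measure $d(\xi,h)=|\det(h^T|_V)|\,d\xi\,dh$ (Lemma \ref{MF}), acting on $\widehat{\mathbb{R}^n}$ from the right via the dual action (\ref{eqn:def_dual_action}).

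\textbf{Step 1: $\Phi_\psi$ as an orbit integral.} I want to read $\Phi_\psi(\omega)$ as an integral over $K$ of $|\widehat\psi|^2$ along the orbit map of $\omega$. A short computation shows that $h^T(\omega-\xi)$ equals $\omega.(\xi',h')$ for a suitable parametrization of $K$. Concretely, I use the inversion $(\xi,h)\mapsto(\xi,h)^{-1}$ and, if needed, the modular function from Lemma \ref{MF}. This should give
\[
\Phi_\psi(\omega)=\int_K |\widehat\psi(\omega.k)|^2\, \Delta_K(k)^{\pm1}\,dk,
\]
up to the correct placement of the weight.

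\textbf{Step 2: disintegrate over the stabilizer.} Fix $\omega$ and set $K_\omega$ for its stabilizer, which is closed. The orbit map $k\mapsto \omega.k$ is constant on cosets $K_\omega k$. Weil's formula therefore splits the integral over $K$ into an integral over $K_\omega\backslash K$ of an inner integral over $K_\omega$. Since the integrand $|\widehat\psi(\omega.k)|^2$ is left $K_\omega$-invariant, the inner integral is essentially the Haar measure of $K_\omega$, weighted by a ratio of modular functions that is positive and locally bounded. Hence, if $K_\omega$ is noncompact, its Haar measure is infinite. In that case $\Phi_\psi(\omega)$ is either $0$ or $+\infty$: it is $0$ exactly when $\widehat\psi$ vanishes almost everywhere on the orbit (in the quotient measure), and $+\infty$ otherwise.

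\textbf{Step 3: conclude.} Let $N=\{\omega\in U: K_\omega \text{ noncompact}\}$. On $N$, Step 2 shows $\Phi_\psi\in\{0,\infty\}$. Both values are excluded almost everywhere by Corollary \ref{C3.1}, so $N$ is a null set, which is the claim. To make this rigorous I must check that $N$ is measurable, which follows from Borel dependence of stabilizers on $\omega$ (standard for Lie group actions). I also need Fubini/Tonelli for nonnegative integrands to justify the pointwise evaluations, and this is fine because $\Phi_\psi$ is defined pointwise by a nonnegative integral.

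\textbf{Main obstacle.} The delicate point is Step 2. The dichotomy $\Phi_\psi(\omega)\in\{0,\infty\}$ requires applying Weil's formula when $K$ is nonunimodular and $K_\omega\backslash K$ carries only a quasi-invariant measure. I would handle this with a rho-function: there is $\rho>0$ continuous with $\int_K F\,dk=\int_{K_\omega\backslash K}\int_{K_\omega}F(sk)\rho(sk)^{-1}\,ds\,d\dot k$. For $F$ that is $K_\omega$-invariant, the inner integral then becomes $\int_{K_\omega}\rho(sk)^{-1}\,ds$. Because $\rho(sk)=\rho(k)\,\Delta_{K_\omega}(s)/\Delta_K(s)$, this equals $\rho(k)^{-1}\int_{K_\omega}\Delta_K(s)/\Delta_{K_\omega}(s)\,ds$. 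The integrand $\Delta_K/\Delta_{K_\omega}$ is a continuous positive homomorphism $K_\omega\to\mathbb{R}_{>0}$, so I must show this integral is infinite on noncompact $K_\omega$. If $\chi:=\Delta_K/\Delta_{K_\omega}$ is trivial, this is just $|K_\omega|=\infty$. Otherwise $\chi$ is unbounded in one direction, and the left-invariance identity $\int\chi(ts)\,ds=\chi(t)\int\chi(s)\,ds$ forces the integral to be $0$ or $\infty$; it cannot be $0$ since $\chi>0$. This is precisely the content of \cite[Lemma 11]{Hartmut}, which I would cite for this step.
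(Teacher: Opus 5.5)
Your proposal is correct and follows essentially the paper's route: the paper gets the statement by invoking \cite[Lemma 11]{Hartmut} for the dual action of $V \rtimes H^T$, and your Steps 1--3 reconstruct that lemma's argument (orbit integral, quotient integral formula, and the $0$-or-$\infty$ dichotomy for noncompact stabilizers). Two bookkeeping points, neither affecting the conclusion: with $\omega.(\xi,h)=h^T(\omega-\xi)$, which is a right action for the group law \eqref{eqn:def_small_semi}, one gets exactly $\Phi_\psi(\omega)=\int_K |\widehat\psi(\omega.k)|^2\,dk$ with no modular weight; and for right cosets $K_\omega k$ with left Haar measure on $K$ the inner integral must be taken against right Haar measure on $K_\omega$, since the transformation rule for $\rho$ you quote is the one for left cosets, but any correct version reduces the inner integral to the integral of a positive continuous character over $K_\omega$, so your dichotomy still holds.
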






At this stage we can already briefly comment on the extreme case $V = \widehat{\mathbb{R}^n}$. In this case $G$ contains the reduced Heisenberg group, and the restriction of $\pi$ to this subgroup coincides with the Schr\"odinger representation, which is known to be a discrete series representation.  Hence we can derive that $V = \widehat{\mathbb{R}^n}$ always entails irreducibility of $\pi$. Furthermore, direct calculation allows to verify that the discrete series property is preserved whenever $H$ is compact. This compactness property also turns out to be necessary for the discrete series property, by Corollary \ref{cor:char_adm}. See Example \ref{ex:main_examples} below for a more complete argument. 



As another extreme case, we consider the choice $V = \{ 0 \}$, which leads to considering semidirect product groups $G = \mathbb{R}^n \rtimes H$ and their associated wavelet systems. This setting is well-studied, beginning with \cite{Taylor}. We refer in particular to \cite{Fuehr4} for a systematic treatment of admissibility conditions beyond the discrete series case. We will repeatedly refer to this class of groups as \textit{the affine case}. 


\section{Criteria for (weakly) admissible subgroups}
\label{sect:adm_criteria}

In this section we characterize reproducing translation complete subgroups, using a detailed measure-theoretic analysis of the dual action. We therefore fix a translation complete subgroup $G = \mathbb{T} \times \mathbb{R}^n \times V \times H$ and a $V \rtimes H^T$-invariant Borel subset $U \subset \widehat{\mathbb{R}^n}$. We want to characterize when there exist (weakly) admissible vectors for $\mathcal{H}_U$. Once again, the affine case provides a valuable template for our arguments. The main references for this section are \cite{Fuehr4,Fuehr3}.

A rigourous formulation requires various notions from ergodic theory, which we now recall. 
A Borel space is called {\bf countably generated} if
the $\sigma$-algebra is generated by a countable subset. 
It is called {\bf separated} if single points are Borel.
A Borel space is called {\bf countably separated} if there is a sequence of
Borel sets separating the points. All these properties are
inherited by products and Borel subspaces.
A Borel space is called {\bf analytic} if it is (Borel-isomorphic to) the
Borel image of a standard space in a countably generated space.

The orbit space $U/(V \rtimes H^T)$ is endowed with the quotient Borel structure, i.e. $A \subset U/(V \rtimes H^T)$ is Borel if and only if $\bigcup_{\mathcal{O} \in A} \mathcal{O} \subset U$ is Borel. $\lambda$ denotes the Haar measure on $\widehat{\mathbb{R}^n}$, which is just the usual Lebesgue measure. 
We say that the orbit space $U/(V \rtimes H^T)$ admits a $\lambda$-transversal if there exists an invariant
$\lambda$-conull Borel set $Y \subset U$ and a Borel set $C\subset Y$ meeting each orbit 
in $Y$ in precisely one point.

A {\bf pseudo-image} of $\lambda$ is a measure $\overline{\lambda}$
on $U/(V \rtimes H^T)$ obtained as image measure of an equivalent finite measure under the quotient map;
clearly all pseudo-images are equivalent.  We call
$\overline{\lambda}$ {\bf standard}, if there exists $Y \subset U$ Borel,
invariant and conull, such that $Y/(V \rtimes H^T)$ is standard.

Finally, we need the notion of a {\bf measure decomposition}: 
A {\bf measurable family of measures} is a family $(\beta_{\mathcal{O}})_{\mathcal{O}
\subset X}$ indexed by the orbits in $X$, such that for all Borel sets $B \subset X$,
the map $\mathcal{O} \mapsto \beta_{\mathcal{O}}(B)$ is Borel on $X/H$. 

A  {\bf measure decomposition of $\mathbf{\lambda}$} consists of a pair 
$(\overline{\lambda},(\beta_{\mathcal{O}})_{\mathcal{O}
\subset X})$ , where  $\overline{\lambda}$
is a pseudo-image of $\lambda$ on $X/H$, or a $\sigma$-finite measure equivalent
to such a pseudo-image, and a measurable family $(\beta_{\mathcal{O}})_{\mathcal{O}
\subset X}$ such that for all $B \subset X$ Borel,
\[
 \lambda(B) = \int_{X/H} \beta_{\mathcal{O}}(B) d\overline{\lambda}(\mathcal{O})~.
\]
Note that this entails, for all positive Borel functions $f$ on $X$, that
\[
 \int_{X} f(\xi) d\lambda(\xi) = \int_{X/H} \int_{\mathcal{O}} f(\xi) d\beta_{\mathcal{O}}(\xi) d\overline{\lambda}(\mathcal{O}). 
\]
We say that {\bf $\mathbf{\lambda}$ decomposes over the orbits} if there exists a measure decomposition
with the additional requirement that, for $\overline{\lambda}$-almost every $\mathcal{O} \in  X/H$, 
the measure $\beta_{\mathcal{O}}$ is supported in $\mathcal{O}$, meaning 
$\beta_{\mathcal{O}}(X\setminus \mathcal{O}) = 0$. 

With this terminology, we can now formulate the pivotal result of this section: 
\begin{theorem} \label{thm:char_weak_adm}
Let $G = \mathbb{T} \times \mathbb{R}^n \times V \times H$ denote a translation complete subgroup, and $U \subset \widehat{\mathbb{R}^n}$ a $V \rtimes H^T$-invariant Borel set of positive measure. Let 
\[
U_c = \{ \xi \in U ~:~ (V \rtimes H^T)_\xi \mbox{ is compact } \}~.
\]
Then $U_c$ is a $V \rtimes H$-invariant Borel set, and the following are equivalent: 
\begin{enumerate}
\item[(i)] $G$ is weakly admissible for $\mathcal{H}_U$.
\item[(ii)] $\lambda (U \setminus \lambda_{U_c}) = 0$, and $\lambda$ decomposes over the orbit space $U/(V \rtimes H^T)$.
\item[(iii)] $\lambda (U\setminus \lambda_{U_c}) = 0$, and $\overline{\lambda}$ is standard on $U$.
\item[(iv)] $\lambda (U\setminus \lambda_{U_c}) = 0$, and $U/(V \rtimes H^T)$ admits a $\lambda$-transversal.
\end{enumerate}
\end{theorem}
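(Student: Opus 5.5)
The plan is to transfer the argument for the affine case (\cite{Fuehr4,Fuehr3}) to the present setting. The only structural input there is a Borel right action of a second countable locally compact group on a $\sigma$-finite measure space, together with the fact that, by Corollary \ref{C3.1}, the admissibility question reduces to properties of the function $\Phi_\psi$. Here the acting group is $V \rtimes H^T$ acting on $\widehat{\mathbb{R}^n}$ via the dual action (\ref{eqn:def_dual_action}). The first observation is that $\Phi_\psi$ is an integral of $|\widehat{\psi}|^2$ along orbit maps against left Haar measure of $V \rtimes H^T$, by Lemma \ref{MF}: $\Phi_\psi(\omega) = \int_{V \rtimes H^T} |\widehat\psi(\omega.(\xi,h)^{-1})|^2\, d(\xi,h)$ up to inverting. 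Hence $\Phi_\psi$ is $V \rtimes H^T$-invariant up to a modular factor, and it is exactly the quantity studied in the affine case.

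First I will show that $U_c$ is Borel and invariant. Invariance follows since stabilizers along an orbit are conjugate. Borel measurability follows from the standard fact that $\omega \mapsto (V \rtimes H^T)_\omega$ is a Borel map into the space of closed subgroups with the Fell topology, in which the compact subgroups form a Borel subset. I expect to cite \cite{Fuehr3} for this rather than reprove it.

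The equivalences (ii)$\Leftrightarrow$(iii)$\Leftrightarrow$(iv) are purely measure-theoretic statements about a Borel action of a second countable group on a standard Borel space with a $\sigma$-finite quasi-invariant measure. They are independent of which group acts, so I will import them verbatim from \cite{Fuehr4} (Effros' theorem, and the results on measure disintegration and transversals), noting that Lebesgue measure is quasi-invariant under the dual action.

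For (i)$\Rightarrow$(ii), Corollary \ref{cor:char_adm} already gives $\lambda(U\setminus U_c)=0$. For the decomposition I will take a weakly admissible $\psi$, so that $0<\Phi_\psi<C$ a.e.\ on $U$. The aim is then to construct a measure decomposition along orbits by setting $\beta_{\mathcal O}$ to be the image of Haar measure on $V\rtimes H^T$ under the orbit map $(\xi,h)\mapsto \omega.(\xi,h)$, weighted by $|\widehat\psi|^2$ and normalized by $\Phi_\psi(\omega)$. This follows the affine argument, in which the finite positive orbit integrals of an $L^2$ function yield a transversal, or equivalently a decomposition over the orbits. In the converse (iv)$\Rightarrow$(i), compactness of stabilizers lets me identify each orbit with the homogeneous space $(V\rtimes H^T)_\omega\backslash(V\rtimes H^T)$ carrying a relatively invariant measure. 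Using the transversal $C$, I will then build $\widehat\psi(\omega.(\xi,h)) = a(\omega)\, w(\xi,h)$ for $\omega\in C$, with $w$ a compactly supported bump on the group and $a$ a Borel normalizing function. The choice is arranged so that $\widehat\psi\in L^2$ and $0<\Phi_\psi\le 1$ a.e.; weak admissibility then follows from Corollary \ref{C3.1}.

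The main obstacle is this last construction. It requires a Borel choice of the parametrization and control of the Jacobian relating $\lambda$ restricted near an orbit to the product of $\overline\lambda$ with the orbit measure, including the modular function from Lemma \ref{MF}. That control is needed to guarantee square-integrability of $\widehat\psi$ while keeping $\Phi_\psi$ bounded. I would first try to localize: partition the transversal into countably many Borel pieces on which the relevant densities are bounded, define $\widehat\psi$ on the saturation of each piece with a weight $2^{-k}$, and sum.
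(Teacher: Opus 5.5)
Your overall route matches the paper's. Reading $\Phi_\psi$ as the orbit integral of $|\widehat\psi|^2$ against left Haar measure of $V\rtimes H^T$ is how the paper uses \cite[Lemma 8]{Fuehr3} to identify (i) with weak admissibility of the dual action. (No inversion is needed: $\omega\cdot(\xi,h)=h^T(\omega-\xi)$ is a right action for the law (\ref{eqn:def_small_semi}), and $|\det(h^T|_V)|\,d\xi\,dh$ is its left Haar measure, so $\Phi_\psi$ is exactly invariant.) The difference is that the paper takes \emph{both} directions between (i) and (ii)--(iv) from \cite[Theorem 12]{Fuehr3}. You import only (ii)$\Leftrightarrow$(iii)$\Leftrightarrow$(iv) and prove the rest by hand, and that is where the argument breaks.

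In (i)$\Rightarrow$(ii), your fibres $\beta_{\mathcal O}(B)=\Phi_\psi(\omega)^{-1}\int 1_B(\omega\cdot g)\,|\widehat\psi(\omega\cdot g)|^2\,dg$ are well defined. But for every choice of $\overline\lambda$, the measure $\int\beta_{\mathcal O}\,d\overline\lambda(\mathcal O)$ gives no mass to $\{\widehat\psi=0\}$, so it cannot be $\lambda$ whenever $\widehat\psi$ vanishes on a set of positive measure. Already for the Heisenberg group ($n=1$, $V=\widehat{\mathbb{R}}$, $H$ trivial) there is a single orbit, and your fibre is $|\widehat\psi|^2\,d\omega/\|\psi\|_2^2$, which is never a multiple of Lebesgue measure. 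The weight $|\widehat\psi|^2/\Phi_\psi$ belongs in the pseudo-image, not in the fibres. The fibres must be Haar images carrying a density that changes along each orbit by a character, trivial exactly when $G$ is unimodular (this is the $\kappa$ preceding Theorem \ref{th.3}).

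A correct version runs as follows. Let $c(g)$ be the absolute Jacobian of $y\mapsto y\cdot g^{-1}$. Tonelli and the invariance of $\Phi_\psi$ give
\[
\int_U f\,d\lambda=\int_U\frac{|\widehat\psi(y)|^2}{\Phi_\psi(y)}\,\beta_y(f)\,d\lambda(y),\qquad \beta_y(f)=\int_{V\rtimes H^T}c(g)\,f(y\cdot g^{-1})\,dg ,
\]
and $\beta_{y\cdot k}=\chi(k)\,\beta_y$ for a character $\chi$, built from $c$ and the modular function of $V\rtimes H^T$, that is trivial exactly when $G$ is unimodular. Fix a strictly positive $f_0\in L^1$. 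This identity together with $\Phi_\psi>0$ a.e.\ shows that $0<\beta_y(f_0)<\infty$ on an invariant conull set. Then $\beta_y/\beta_y(f_0)$ depends only on the orbit of $y$. Combined with the image of the finite measure $|\widehat\psi|^2\,\beta_{\cdot}(f_0)\,\Phi_\psi^{-1}\,d\lambda$, which is equivalent to a pseudo-image, this is the required decomposition.

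For (iv)$\Rightarrow$(i) you leave the decisive step open yourself. Once (ii) is available with finite $\overline\lambda$, the Jacobian bookkeeping can be avoided. Let $\nu_{\mathcal O}$ be the image of left Haar measure under $g\mapsto\omega\cdot g$, which does not depend on $\omega\in\mathcal O$, and set $|\widehat\psi|^2=F_0/(1+\nu_{\mathcal O}(F_0)+\beta_{\mathcal O}(F_0))$. This gives $0<\Phi_\psi\le1$ and $\|\psi\|_2^2\le\overline\lambda(U/(V\rtimes H^T))$. What remains is to find a strictly positive $F_0\in L^1(U)$ with $\nu_{\mathcal O}(F_0)<\infty$ on almost every orbit. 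That is exactly where compact stabilizers and the transversal enter, and it is part of what the paper obtains by citing \cite[Theorem 12]{Fuehr3}.
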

\begin{proof}
The wavelet version of this result was proved in \cite{Fuehr3}, and our argument is a fairly straightforward adaptation. We therefore only sketch it. 

Recall that by Corollary \ref{cor:char_adm}, $G$ is admissible for $U$ if and only if there exists $\psi \in \mathcal{H}_U$ satisfying
\begin{equation}
 0 < \int_V \int_H |\widehat{\psi}(h^T(\omega-\xi))|^2 |{\det}(h^T|_V)| \, dh \, d\xi < C_\psi~~,\mbox{a.e. }\omega  \in U~.
\end{equation}
This condition is easily seen (via \cite[Lemma 8]{Fuehr3}) to be equivalent to the property that the dual action of $V \rtimes H^T$ is weakly admissible in the sense of \cite[Definition 7]{Fuehr3}. 

Now first assume that (i) holds. Then the set $U_c \subset U$ is Borel and conull by \cite[Lemma 11]{Fuehr3}. Furthermore, \cite[Theorem 12]{Fuehr3} applies to yield the desired equivalence to (ii)--(iv), as well as the converse direction. 
\end{proof}

The theorem is useful in several ways. Most importantly, it is a sharp characterization of weakly admissible groups. Secondly, as we shall see in the next subsection, it provides easy access to a characterization of the discrete series case. Finally, the measure decomposition will turn out to be the key to the characterization of admissible groups. 

\subsection{Characterizing discrete series subrepresentations}
We start with a characterization of the discrete series case. Once again, the wavelet case provides the blueprint. 
\begin{Corollary} \label{cor:char_discrete_series} Let $G = \mathbb{T} \times \mathbb{R}^n \times V \times H$ denote a translation complete subgroup, and $U \subset \widehat{\mathbb{R}^n}$ a $V \rtimes H^T$-invariant Borel set of positive measure. 
Then the quasi-regular representation, acting on $\mathcal{H}_U$, is a discrete series representation if and only if $U$ contains a conull open orbit $\mathcal{O} \subset U$ of positive measure, with the additional property that the stabilizers associated with the open orbit are compact. 
\end{Corollary}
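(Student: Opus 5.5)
We prove the two directions separately. Both rest on Corollary \ref{cor:irred}, which identifies irreducibility with ergodicity, and on Theorem \ref{thm:char_weak_adm} together with Corollary \ref{C3.1}. Throughout, write $S = V \rtimes H^T$. Since $S$ is a Lie group acting smoothly on $\widehat{\mathbb{R}^n}$, every orbit $\gamma.S$ is an immersed submanifold. In particular each orbit is either open, or is a countable union of images of lower-dimensional manifolds and hence a $\lambda$-null set.

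\emph{Sufficiency.} Let $\mathcal{O} \subset U$ be a conull open orbit with compact stabilizers. Then $\mathcal{H}_U = \mathcal{H}_{\mathcal{O}}$. A single orbit is trivially ergodic, so $\pi_U$ is irreducible by Corollary \ref{cor:irred}. For square-integrability, fix $\gamma_0 \in \mathcal{O}$. The orbit map $S \to \mathcal{O}$, $s \mapsto \gamma_0.s$, is a submersion with compact fibres, namely cosets of the compact stabilizer $S_{\gamma_0}$. Hence $\mathcal{O} \cong S_{\gamma_0}\backslash S$, and this map is proper. We then choose $\widehat{\psi} = \chi_K$ for a compact set $K \subset \mathcal{O}$ of positive measure.
\begin{itemize}
\item For $\omega = \gamma_0.s_0$, the integral $\Phi_\psi(\omega)$ equals the left Haar measure, up to the modular factor, of the set $\{s : \gamma_0.s_0 s^{-1}\in K\}$, or an analogous set. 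Properness makes this set compact, so $\Phi_\psi(\omega)$ is finite.
\item This set has nonempty interior, so $\Phi_\psi(\omega)$ is positive.
\item A change of variables shows that $\Phi_\psi(\gamma.s) = \Delta(s)^{\pm1}\Phi_\psi(\gamma)$ for a suitable modular-type function $\Delta$. So $\Phi_\psi$ is finite and positive everywhere on $\mathcal{O}$, but a priori unbounded.
\end{itemize}
Since $\pi_U$ is irreducible, admissibility of $\psi$ reduces, by part (b) of the remark after Definition \ref{def:admissible}, to $\int_G |V_\psi \psi|^2\,d\mu_G<\infty$. By Proposition \ref{L3.1}, this integral equals $\int_K \Phi_\psi(\omega)\,d\omega$. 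This is finite because $\Phi_\psi$ is continuous, hence bounded on compact $K$. Continuity follows from the explicit coset description of $\Phi_\psi$ together with properness.

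\emph{Necessity.} Assume $\pi_U$ is a discrete series representation. Then $G$ is weakly admissible for $\mathcal{H}_U$, so Theorem \ref{thm:char_weak_adm}(iv) applies: almost every stabilizer is compact, and there is a $\lambda$-transversal. Irreducibility gives ergodicity by Corollary \ref{cor:irred}. Ergodicity together with a transversal (a standard quotient) forces the pseudo-image $\overline{\lambda}$ to be concentrated on a single orbit $\mathcal{O}$, since a standard Borel space carrying an ergodic measure supports it on one point. Therefore $\mathcal{O}$ is conull in $U$, and in particular $\lambda(\mathcal{O})>0$. By the dichotomy above, an orbit of positive measure must be open. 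Finally, all stabilizers along an orbit are conjugate. Because almost every stabilizer is compact and $\mathcal{O}$ is conull, some stabilizer on $\mathcal{O}$ is compact, and hence all of them are.

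The \textbf{main obstacle} is the sufficiency direction, specifically establishing that $\Phi_\psi$ is locally bounded on the open orbit. The plan is to use the proper action of $S$ on $S_{\gamma_0}\backslash S \cong \mathcal{O}$ and reduce $\Phi_\psi$ to Haar measures of compact subsets of $S$. An alternative is to cite the analogous affine result (Führ's discrete series criterion for $\mathbb{R}^n\rtimes H$), applied verbatim with the linear action replaced by the dual action of $S$.
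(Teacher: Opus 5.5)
Your proof is correct. The necessity direction follows the paper's argument step by step:
\begin{itemize}
\item weak admissibility feeds into Theorem \ref{thm:char_weak_adm};
\item irreducibility gives ergodicity via Corollary \ref{cor:irred};
\item an ergodic measure on a countably separated quotient is a point mass (the paper cites Moore here, and you reprove it);
\item orbits of positive measure are open (the paper's Lemma \ref{L3.2}, which you derive from the immersed-orbit dichotomy);
\item stabilizers along an orbit are conjugate, so one compact stabilizer makes all of them compact.
\end{itemize}

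For sufficiency you take a genuinely different route. The paper applies Theorem \ref{thm:char_weak_adm} in the direction (iv)$\Rightarrow$(i): a single point is a transversal and the stabilizers are compact, so $G$ is weakly admissible for $\mathcal{H}_{\mathcal{O}}=\mathcal{H}_U$. It then combines this with irreducibility and part (b) of the remark after Definition \ref{def:admissible}. That is short, but it rests entirely on the adapted measure-decomposition machinery, whose proof is only sketched in the paper. Your argument is elementary and constructive. Compactness of the stabilizers enters transparently as properness of the orbit map $V\rtimes H^T\to\mathcal{O}$, and you obtain explicit admissible vectors $\widehat{\psi}=\chi_K$.

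Two small corrections to your sufficiency argument.

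First, you are more pessimistic than necessary. The map $\omega\mapsto h^T(\omega-\xi)$ that actually appears in $\Phi_\psi$ is a right action for the group law \eqref{eqn:def_small_semi}. Moreover, $|\det(h^T|_V)|\,d\xi\,dh$ is \emph{left} Haar measure. Left invariance therefore gives $\Phi_\psi(\omega.s)=\Phi_\psi(\omega)$ exactly, with no modular factor. So $\Phi_\psi$ is constant on $\mathcal{O}$, and Corollary \ref{C3.1} gives admissibility of $\chi_K$ at once. The detour through continuity and part (b) of the remark is valid but unnecessary.

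Second, the claim that $\{s:\gamma_0.s\in K\}$ has nonempty interior requires $K$ itself to have nonempty interior, which a compact set of positive measure need not have. Either take $K$ to be a closed ball in $\mathcal{O}$, or use that the orbit map is a submersion, so preimages of non-null sets are non-null.
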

\begin{proof}
If $\pi_U$ is a discrete series representation, then $G$ is weakly admissible for $\mathcal{H}_U$. Hence $\overline{\lambda}$ is standard on $U/(V \rtimes H^T)$. On the other hand, irreducibility of the representation entails that the action on $U$ is ergodic, by Corollary \ref{cor:irred}. Now the standardness of the orbit space entails, by \cite[Proposition I.3.9]{Moore}, the existence of a conull orbit $\mathcal{O} \subset U$, and the stabilizers of that orbit must be compact by Theorem \ref{thm:char_weak_adm}. 

Conversely, if an open conull orbit $\mathcal{O} \subset U$ exists, then $\mathcal{H}_U = \mathcal{H}_{\mathcal{O}}$ is weakly square integrable and irreducible, hence a discrete series representation. 

Hence, it remains only to show that orbits of positive measure are already open. This is provided by the next lemma.
\end{proof}

The next results are rather straightforward adaptations from the fully affine case (see e.g. \cite{Fuehr2}), which is why we omit the proofs. 

{\lemma  \label{L3.2} Let $G$ be a (second countable) Lie group acting smoothly on the ${C}^{\infty}$-manifold
$X$, and let $\mathcal{O}$ be an orbit in $X$ with positive measure (with respect to the Lebesgue measure class on $X$). Then
$\mathcal{O}$ is open. }


Let us denote the Lie algebra of $V\rtimes H$  by $\mathfrak{v}\rtimes \mathfrak{h}$. Then an adaptation of  Lemma 2 from \cite{Fuehr2} to $V\rtimes H$ gives the following rank condition for open orbits. 

{\lemma \label{L1} Let $V\rtimes H^T$ be a closed subgroup with Lie algebra $\mathfrak{v}\rtimes \mathfrak{h}^T$.
For each $\gamma\in \mathbb{R}^n$, the orbit $\mathcal{O}_\gamma=\gamma. (V \rtimes H^T) $ is open if and only if the differential of the map
$(\xi,h)\mapsto h^{T}(\gamma-\xi)$ has full rank at the neutral element. More explicitly, $\mathcal{O}_\gamma$ is open if and only if the map
$dp_\gamma: \mathfrak{v}\rtimes \mathfrak{h}^T\rightarrow \mathbb{R}^n$ is onto.
}

 The following is a straightforward adaptation of Theorem 3 from \cite{Fuehr2} to the action $V\rtimes H$. 
\begin{theorem} \label{thm:dec_open_orbits}
   Let $V\rtimes H^T$ be closed. If there exists $x\in \widehat{\mathbb{R}^n}$ such that $\mathcal{O}_x$ is open, then,
up to null sets, $\widehat{\mathbb{R}^n}$ is the union of finitely many open $V \rtimes H^T$-orbits. 
\end{theorem}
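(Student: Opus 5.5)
The plan follows the proof of Theorem 3 in \cite{Fuehr2}: show that the set of points with open orbit is Zariski open, hence conull, and then bound the number of its connected components.

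\textbf{Step 1: the set $\Omega$ of points with open orbit.} Let
\[
\Omega = \{ \gamma \in \widehat{\mathbb{R}^n} ~:~ \mathcal{O}_\gamma \mbox{ is open} \}.
\]
By Lemma \ref{L1}, $\gamma \in \Omega$ if and only if the linear map $dp_\gamma : \mathfrak{v}\rtimes \mathfrak{h}^T \to \mathbb{R}^n$ is onto. Differentiating $(\xi,h) \mapsto h^T(\gamma-\xi)$ at the identity gives
\[
dp_\gamma(v,Y) = Y^T\gamma - v, \qquad v \in \mathfrak{v}=V,\ Y \in \mathfrak{h}.
\]
This is affine-linear in $\gamma$. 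Fix bases $v_1,\dots,v_m$ of $V$ and $Y_1,\dots,Y_k$ of $\mathfrak{h}$. Then $dp_\gamma$ is represented by an $n \times (m+k)$ matrix $M(\gamma)$ whose entries are polynomials of degree at most one in $\gamma$.

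Surjectivity of $dp_\gamma$ means that some $n\times n$ minor of $M(\gamma)$ is nonzero. Hence $\Omega$ is the complement of the common zero set $Z$ of finitely many polynomials $P_1,\dots,P_r$ in $\gamma$.

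\textbf{Step 2: $\Omega$ is conull.} By hypothesis some $x$ lies in $\Omega$, so at least one $P_j$ is not the zero polynomial. Therefore $Z \subset \{P_j = 0\}$ is a proper real algebraic set, which is a Lebesgue null set. So $\Omega$ is open and conull. It is also invariant, since open orbits are permuted by the action. Each connected component of $\Omega$ is a union of open orbits, and orbits are disjoint and open. Connectedness of a component therefore forces it to be exactly one orbit. Consequently, up to the null set $Z$, $\widehat{\mathbb{R}^n}$ is the disjoint union of the open orbits, and these are precisely the connected components of $\Omega = \mathbb{R}^n \setminus Z$.

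\textbf{Step 3: finiteness.} The complement of a real algebraic variety in $\mathbb{R}^n$ has only finitely many connected components, by Whitney's theorem (or the Milnor--Thom bound). Taking $Z$ as the variety, this bound gives finitely many open orbits. This is the step that needs an external tool. Alternatively, one can replace $Z$ by the hypersurface $\{P_j=0\}$ for a single nonzero $P_j$. That gives an open set $\Omega' \subset \Omega$, still conull, whose components are finite in number by Whitney/Milnor. Each orbit meets $\Omega'$ in an open set of positive measure, so the orbits correspond injectively to unions of components of $\Omega'$. Either route gives the finiteness claim.
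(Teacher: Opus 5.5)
Your proposal is correct and is essentially the argument of Theorem 3 in \cite{Fuehr2} that the paper invokes. Lemma \ref{L1} turns the set of points with open orbits into the complement of a proper real algebraic set, which is therefore open and conull, and Whitney's theorem then bounds the number of its connected components. One correction to Step 2: since $V \rtimes H^T$ need not be connected, a component of $\Omega$ is only \emph{contained} in a single orbit, and an orbit may be a union of several components. For example, $n=1$, $V=\{0\}$, $H=\mathrm{GL}(1,\mathbb{R})$ gives the single open orbit $\mathbb{R}\setminus\{0\}$, which has two components. The bound you actually need, that there are no more open orbits than components of $\Omega$, still holds, as your Step 3 alternative correctly uses.
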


%

For discrete series representations, it is possible to replace the slightly unwieldy integral over $V \rtimes H^T$ in the admissibility condition from Corollary \ref{C3.1} by an integral against a suitable weight in the Fourier domain. 
The wavelet counterpart of the following result is \cite[Theorem 13]{Hartmut}, with a largely analogous proof, hence we omit the proof here. 
\begin{theorem}
\label{T3.1} 
Assume that $G = \mathbb{T} \times \mathbb{R}^n \times V \times H$ is a translation complete subgroup, and that $U \subset \widehat{\mathbb{R}^n}$ is an open orbit with compact dual stabilizers. Fix $\gamma \in U$. The function $\Psi$ defined on $U$ by 
\begin{equation*}
\Psi(h^T(\gamma-\xi)) := \frac{\Delta_{V \rtimes H^T}(\xi,h)}{|{\rm det}(h)|}
\end{equation*}
is well-defined and continuous on $U$, and for every $\psi \in \mathcal{H}_U$, $\psi$ is admissible if and only if 
\[ \int_U |\widehat{\psi}(\eta)|^2 \Psi(\eta) \, d\lambda(\eta) < \infty.
\]
\end{theorem}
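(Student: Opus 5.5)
Write $K = V \rtimes H^T$ and let $p_\gamma: K \to U$, $p_\gamma(\xi,h) = h^T(\gamma-\xi)$, be the orbit map at the fixed point $\gamma$. Then $U = p_\gamma(K)$. The plan is to transport the integral defining $\Phi_\psi$ from $K$ to $U$ using $p_\gamma$, with Lebesgue measure on $U$ compared to the pushforward of Haar measure. Note that $p_\gamma$ is orbit evaluation for the \emph{left} action $(\xi,h)\cdot\eta = h^T(\eta-\xi)$ of $K$ on $\widehat{\mathbb{R}^n}$, which is the inverse of the right dual action (\ref{eqn:def_dual_action}).

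\textbf{Well-definedness.} $p_\gamma(k_1) = p_\gamma(k_2)$ holds exactly when $k_1 \in k_2 K_\gamma$, where $K_\gamma$ is the stabilizer, assumed compact. Compact groups have trivial modular function, and $|\det h| = 1$ for any $(\xi,h)$ in a compact subgroup. Hence the weight $\Delta_K(\xi,h)/|\det h|$ is constant on the cosets $kK_\gamma$, and $\Psi$ is well-defined. By Lemma \ref{L1} the map $p_\gamma$ is a submersion, so $K/K_\gamma \to U$ is a diffeomorphism. The weight is continuous on $K$ and right $K_\gamma$-invariant, so it descends to a continuous function on $U$; this gives continuity of $\Psi$.

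\textbf{Change of variables.} We compare the pushforward of left Haar measure on $K$ with Lebesgue measure on $U$. Let $\mu$ be the image under $p_\gamma$ of a (right) Haar measure on $K$. Left translation by $k$ moves $p_\gamma$ to $k\cdot p_\gamma$, and the point action $\eta\mapsto h^T(\eta-\xi)$ scales Lebesgue measure by $|\det h|$. Together with uniqueness of quasi-invariant measures on $K/K_\gamma$, this shows that Lebesgue measure on $U$ is a density times the pushforward of Haar measure. The density is a character-type expression in $|\det h|$ and $\Delta_K$, namely exactly $\Psi$ (up to a constant), giving
\[
\int_K F(p_\gamma(k))\, dk \;=\; c\int_U F(\eta)\,\Psi(\eta)\, d\lambda(\eta)
\]
for nonnegative Borel $F$. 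Getting this Radon--Nikodym factor exactly right (left versus right Haar measure, and the direction of the action) is the main obstacle. I would fix it by checking the transformation rule on both sides under $\gamma \mapsto k_0^{-1}\cdot\gamma$.

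\textbf{Conclusion.} Take $\omega = k_0^{-1}\cdot\gamma$, i.e.\ $\omega = p_\gamma(k_0^{-1})$. Then $\Phi_\psi(\omega) = \int_K |\widehat\psi(k\cdot\omega)|^2\, dk$, and substituting $k \mapsto k k_0$ yields
\[
\Phi_\psi(\omega) = \Delta_K(k_0)^{\pm 1}\int_K |\widehat\psi(p_\gamma(k))|^2\, dk .
\]
By the change-of-variables formula, $\Phi_\psi(\omega)$ is therefore a positive continuous multiple of $\int_U|\widehat\psi|^2\Psi\, d\lambda$. Corollary \ref{C3.1} requires $\Phi_\psi$ to be constant a.e.\ on $U$, and the computation is consistent with this only if the factor $\Delta_K(k_0)^{\pm1}$ is absorbed. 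So I would redo the substitution on the side that avoids $\Delta_K$; there the Haar measure is invariant under the relevant translation, giving
\[
\Phi_\psi(\omega) = C\int_U |\widehat\psi(\eta)|^2\,\Psi(\eta)\, d\lambda(\eta)
\]
independent of $\omega$. Thus $\Phi_\psi$ is a.e.\ equal to this constant, which is positive for $\psi\neq0$. Since $\pi_U$ is irreducible (the orbit is ergodic), Corollary \ref{C3.1} shows that admissibility is equivalent to finiteness of the integral.
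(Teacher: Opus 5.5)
Your route is the right one, and it mirrors the affine argument the paper points to: transport $\Phi_\psi$ to $U$ along the orbit map, identify the density of the image of Haar measure, and conclude with Corollary~\ref{C3.1}. However, the decisive step is missing, and the framework you set up points the wrong way.

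For the group law \eqref{eqn:def_small_semi}, $\eta.(\xi,h):=h^T(\eta-\xi)$ is a \emph{right} action, since $h_2^T\bigl(h_1^T(\eta-\xi_1)-\xi_2\bigr)=(h_1h_2)^T\bigl(\eta-(\xi_1+h_1^{-T}\xi_2)\bigr)$. The $h^{-T}$ in \eqref{eqn:def_dual_action} is a misprint; $h^T(\gamma-\xi)$ is what Proposition~\ref{L3.1}, Lemma~\ref{L1} and the statement actually use. Hence $p_\gamma(kk_0)=p_\gamma(k).k_0$ and $p_\gamma(k_0k)=p_{\gamma.k_0}(k)$, and the fibres of $p_\gamma$ are $K_\gamma k$, not $kK_\gamma$.

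Well-definedness still goes through. The reason is that $w(\xi,h)=\Delta_K(\xi,h)/|\det h|$ is a continuous homomorphism $K\to\mathbb{R}^+$, hence trivial on the compact subgroup $K_\gamma$; this, rather than unimodularity of compact groups, is the relevant fact.

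The change of variables, though, is only asserted (``namely exactly $\Psi$''), and what you wrote would not produce it. The image of a \emph{right} Haar measure is $K$-invariant, so its density with respect to $\lambda$ is proportional to $1/|\det h|$, not $\Psi$. Your substitution $k\mapsto kk_0$ then yields a factor $\Delta_K(k_0)^{\pm1}$, which you discard by fiat. So the identity that constitutes the whole content of the theorem is not established.

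Once the direction is fixed, the argument is short.

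\begin{itemize}
\item Work with the left Haar measure $\mu_K=|\det(h^T|_V)|\,d\xi\,dh$ of Lemma~\ref{MF}, which is the measure appearing in $\Phi_\psi$.
\item For $\omega=\gamma.k_0$ one has $\omega.k=\gamma.(k_0k)$. Left invariance then gives $\Phi_\psi(\omega)=\Phi_\psi(\gamma)$, with no modular factor.
\item The measure $\nu=(p_\gamma)_*\mu_K$ is Radon on $U$, because $p_\gamma$ is proper when $K_\gamma$ is compact. From $p_\gamma^{-1}(A.k_0)=p_\gamma^{-1}(A)\,k_0$ you get $\nu(A.k_0)=\Delta_K(k_0)\nu(A)$.
\item For $k_0=(\xi_0,h_0)$ we have $\lambda(A.k_0)=|\det h_0|\,\lambda(A)$ and $\Psi(\eta.k_0)=\Psi(\eta)\,\Delta_K(k_0)/|\det h_0|$. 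Hence the measure $\Psi\,d\lambda$ obeys the same transformation rule as $\nu$.
\item Relatively invariant Radon measures with a given character on $K_\gamma\backslash K\cong U$ are unique up to scalars; here the character $\Delta_K$ is trivial on $K_\gamma$. Therefore $\nu=c\,\Psi\,\lambda$, i.e.
\[
\Phi_\psi(\omega)=\int_K|\widehat{\psi}(\gamma.k)|^2\,d\mu_K(k)=c\int_U|\widehat{\psi}(\eta)|^2\,\Psi(\eta)\,d\lambda(\eta)\qquad\mbox{for all }\omega\in U.
\]
\end{itemize}

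Corollary~\ref{C3.1} then concludes directly; irreducibility of $\pi_U$ is not needed.
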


Note that just as in the affine case described in \cite{Taylor,Fuehr4}, the weight function $\Psi$ can be used for an explicit description of the \textbf{Duflo-Moore operator} associated with each discrete series representation. 
\subsection{Characterization of admissible groups}

We now turn to the characterization of admissible groups. We start with the assumption that $G = \mathbb{T} \times \mathbb{R}^n \times V \times H$ is weakly admissible, i.e. there exists $\psi_1 \in \mathcal{H}_U$ such that 
\[ 0 < \int_V \int_H |\widehat{\psi_1}(h^T(\omega-\xi))|^2 |{\det}(h^T|_V)| \, dh \, d\xi < C_\psi~~,\mbox{a.e. }\omega  \in U~,\]
and we want to know when there exists $\psi_2 $
\[  \int_V \int_H |\widehat{\psi_2}(h^T(\omega-\xi))|^2 |{\det}(h^T|_V)| \, dh \, d\xi =1 ~~,\mbox{a.e. }\omega  \in U~.\]
As it turns out, this involves measure-theoretic conditions on $U$. In particular,   the measure decomposition on $U$, as provided by Theorem \ref{thm:char_weak_adm}, will become particularly important. Another relevant issue in the following will be whether $G$ is unimodular or not. 

Hence, fix any pseudo-image 
 $\overline{\lambda}$ of $\lambda$ on the orbit space 
 $U/(V \rtimes H^T) \simeq \{\mathcal{O}_\gamma\}_{\gamma\in U}$. As noted above, 
there exists  a family of measures $(\beta_{\mathcal{O}_\gamma})_{\mathcal{O}}$ such that $ d\lambda(x)=d\beta_{\mathcal{O}_\gamma}(x)d\overline{\lambda}(\mathcal{O}_\gamma)$, and each $\beta_{\mathcal{O}}$ is supported on $\mathcal{O}$. Furthermore, the measures are essentially uniquely determined: If $\overline{\lambda}_1$ and $\left( \tilde{\beta}_{\mathcal{O}} \right)_{\mathcal{O} \in U/(V \rtimes H^T)}$ is a second measure decomposition, then $\tilde{\beta}_{\mathcal{O}} = \alpha(\mathcal{O}) \beta_{\mathcal{O}}$, and $\frac{d\overline{\lambda}_1}{d\overline{\lambda}}(\mathcal{O}) = \alpha(\mathcal{O})$, where $\alpha: U/(V \rtimes H^T) \to \mathbb{R}^+$ is a suitable measurable function. 

On the other hand, the fact that all stabilizers associated with $U$ are compact allows us to introduce a measure $\mu_{\mathcal{O}}$ on the orbit $\mathcal{O}$, by picking $\gamma \in \mathcal{O}$, and letting 
\[ \mu_{\mathcal{O}}(A) = \mu_{V \rtimes H^T}( \{ (\xi,h) \in V \rtimes H : \gamma. (\xi,h) \in A \} )~.
\]
Just as $\beta_{\mathcal{O}}$, the measure $\mu_{\mathcal{O}}$ is $\sigma$-finite and quasi-invariant, and therefore $\beta_{\mathcal{O}}$ and $\mu_{\mathcal{O}}$ are equivalent. Hence, there exists an essentially unique Borel function $\kappa:U\rightarrow \mathbb{R}^+_0$ such that for 
$\overline{\lambda}$-almost all orbits,
 \[
 \kappa(x)=\frac{d\lambda}{d\mu}(x)=\frac{d\beta_{\mathcal{O_\gamma}}}{d\mu_{\mathcal{O_\gamma}}}(x).
 \]
 
So, we can write 
\begin{equation}\label{eq:a2}
   \int_U
 f(x) d\lambda(x)=\int_{\mathcal{O}}\int_{\mathcal{O}_\gamma} f(x) d\mu_{\mathcal{O}_\gamma}(x)
\kappa(x)d\overline{\lambda}(\mathcal{O}_\gamma),
\end{equation}
where $\kappa(x)$ is a global Radon-Nikodym derivative. In particular, $\kappa$ is constant on the orbits (or $\kappa$ is $V\rtimes H^T$-invariant) if and only if $G$ is unimodular.

Assuming now that $G$ is unimodular, we can simultaneously regard $\kappa$ as a measurable map on the orbit 
space and rewrite 
$$d\beta_{\mathcal{O}_\gamma}(x)=\frac{d\lambda}{d\mu}(x)d\mu_{\mathcal{O}_\gamma}(x)=
\overline\kappa(\mathcal{O}_\gamma)d\mu_{\mathcal{O}_\gamma}(x),$$ 
leading to 
\begin{equation} \label{eqn:meas_decomp_unimod}
   d\lambda(x)=\kappa(\mathcal{O}_\gamma)d\mu_{\mathcal{O}_\gamma}(x)d\overline{\lambda}(\mathcal{O}_\gamma) = d\mu_{\mathcal{O}_\gamma}(x) d\overline{\lambda}_{\kappa}(\mathcal{O}_\gamma)~,
   \end{equation}
 using the new measure $\overline{\lambda}_{\kappa}$
  \[
  d\overline{\lambda}_{\kappa}(\mathcal{O}_\gamma) = \kappa(\mathcal{O}) d\overline{\lambda}(\mathcal{O})
  \]
 on the orbit space. 

 The following theorem demonstrates the usefulness of this particular normalization: 
{\theorem \label{th.3} Assume that $G=\mathbb{T}\times \mathbb{R}^n\times V \rtimes H^T$ is a unimodular translation complete subgroup of $G_{aWH}$. Let $U \subset \widehat{\mathbb{R}^n}$ be a $V \rtimes H^T$-invariant Borel set of positive measure.
Then $G$ is admissible for $\mathcal{H}_U$ if and only if $G$ is weakly admissible for $\mathcal{H}_U$, and in addition
\[
\overline{\lambda}_{\kappa}(U/(V \rtimes H^T))<\infty,
\]
where $\overline{\lambda}_{\kappa}$ is defined via (\ref{eqn:meas_decomp_unimod}).
}
\begin{proof} Assume $G$ is unimodular and there exists an admissible vector $\psi\in \mathcal{H}_U$, then
\begin{equation*}
\begin{aligned}
 ||\psi||^2_2 &=&& \int_{U} |\widehat{\psi}(x)|^2d\lambda(x)\\
 &=&& \int_{\mathcal{O}}\int_{\mathcal{O}_\gamma}|\widehat{\psi}(x)|^2 d\mu_{\mathcal{O}_\gamma}(x)d\overline{\lambda}_{\overline{\kappa}}(\mathcal{O}_\gamma)\\
 &=&& \int_{\mathcal{O}}\int_{V \rtimes H^T}|\widehat{\psi}(\gamma\cdot(\xi,h))|^2 d\mu_{V \rtimes H^T}(\xi,h)d\overline{\lambda}_{\overline{\kappa}}(\mathcal{O}_\gamma)\\
 &=&& \int_{\mathcal{O}} 1 d\overline{\lambda}_{\overline{\kappa}}(\mathcal{O}_\gamma)\\
 &=&& \overline{\lambda}_{\overline{\kappa}} (\mathcal{O}),
\end{aligned}
\end{equation*}
and due to the admissibility of $\psi$ we can conclude $ \overline{\lambda}_{\overline{\kappa}} (\mathcal{O})<\infty$.\\
For the converse, assume that there exists a vector $\psi_0 \in \mathcal{H}_U$ satisfying 
\begin{equation*}
 0<\int_{V \rtimes H^T} |\widehat{\psi}_0(\gamma\cdot (\xi,h))|^2 d\mu(\xi,h)<\infty  \quad ~~~ \text{for}~~ \lambda-a.e~~~ \gamma\in U. 
\end{equation*}

Now, define
\begin{equation*}
\Phi(\gamma)= \left( \int_{V \rtimes H }|\widehat{\psi_0}(\gamma\cdot(\xi,h))|^2 d\mu_{V \rtimes H }(\xi,h)\right)^{1/2}.
\end{equation*}
Let $\widehat{\psi}(\gamma)= \widehat{\psi_0}(\gamma)/\Phi(\gamma)$. A simple calculation shows that $\widehat{\psi}$ satisfies the Calderon condition:
\begin{equation*}
 \begin{aligned}
  \int_{V \rtimes H }|\widehat{\psi}(\gamma\cdot(\xi,h))|^2 d\mu_{V \rtimes H }(\xi,h)
  &=&&\int_{V \rtimes H^T} |\widehat{\psi_0}(\gamma\cdot(\xi,h))|^2d\mu_{V \rtimes H }(\xi,h)\cdot \frac{1}{\Phi(\gamma)}\\ 
  &=&& 1 \hspace{0.5cm} (\gamma-a.e).
 \end{aligned}
\end{equation*}
Since $G$ is unimodular, the measure decomposition allows us to compute 
\begin{equation*}
\begin{aligned}
 ||\psi||^2_2 
 &=&& \int_{\mathcal{O}}\int_{\mathcal{O}_\gamma}|\psi(x)|^2 d\mu_{\mathcal{O}_\gamma}(x) d\overline{\lambda}_{\overline\kappa}(\mathcal{O}_\gamma)\\
 &=&& \overline{\lambda}_{\overline\kappa} (\mathcal{O}).
\end{aligned}
\end{equation*}
 By assumption $\overline{\lambda}_{\overline\kappa} (\mathcal{O})<\infty$, thus the inverse Plancherel transform of $\widehat{\psi}$ is well-defined element of $\mathcal{H}_U$, and by construction it is admissible
for $\mathcal{H}_U$.
\end{proof}

The following Corollary is a direct consequence of the last paragraph of the proof: 
{\Corollary Assume that $G$ and $U$ are as in Theorem \ref{th.3}. Then the norm of an admissible vector $\psi \in \mathcal{H}_U$ is independent of the choice of $\psi$, i.e., if $\psi_1$ and $\psi_2$ are two such 
admissible vectors, then $||\psi_1||=||\psi_2||$.}

We continue our study of the unimodular case, mostly with the aim of showing a number of no-go results.
\begin{Proposition}\label{P.4.1} Assume that $G=\mathbb{T}\times \mathbb{R}^n\times V \rtimes H^T$ is a unimodular translation complete subgroup of $G_{aWH}$. Let $U \subset \widehat{\mathbb{R}^n}$ be an $V \rtimes H^T$-invariant Borel set of positive measures with the property that $G$ is weakly admissible for $\mathcal{H}_U$. Further assume that $\mathbb{R}^+ U = U$.
 Let the pair $(\overline{\lambda}_\kappa,\beta_{\mathcal{O}_\gamma})$ be a measure decomposition of Lebesgue measure on $U \subset \widehat{\mathbb{R}^n}$, thus
\[
d\lambda(x)=d\beta_{\mathcal{O}_\gamma}(x)d{\overline\lambda}_V({\mathcal{O}_\gamma}).
\]
For $a\in \mathbb{R}^*$ and $\gamma\in U$, let $a^*(\beta_{\mathcal{O}_\gamma})$
 denote the image measure of $\beta_{\mathcal{O}_\gamma}$ on $\mathcal{O}_{a\gamma}$, i.e., for a measurable set 
 $A\subset \mathcal{O}_{a\gamma}$, 
$a^*(\beta_{\mathcal{O}_\gamma})(A):=\beta_{\mathcal{O}_\gamma}(a^{-1}A)$. Moreover, let the measure $\overline{\lambda}_a$ be given by
$\overline{\lambda}_a(B):=\overline{\lambda}_V(aB)$, where $B$ is a subset of the orbit space. Then the following relations hold:
\begin{equation*}
 \begin{aligned}
  \beta_{\mathcal{O}_{a\gamma}}&=&&|a|^{\dim V} a^*(\beta_{\mathcal{O}_\gamma})\\
   \overline{\lambda}_a &=&&|a|^{n-\dim V}\overline{\lambda}_V\\
 \end{aligned}
\end{equation*}
\end{Proposition}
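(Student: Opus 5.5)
The plan is to exploit the fact that scalar dilation $\gamma \mapsto a\gamma$ commutes with the dual action up to a rescaling of the $V$-component. I will read the decomposition in the statement as the unimodular normalization (\ref{eqn:meas_decomp_unimod}), so that $\beta_{\mathcal{O}} = \mu_{\mathcal{O}}$ and $\overline{\lambda}_V = \overline{\lambda}_\kappa$. I assume that $a\gamma \in U$ whenever $\gamma \in U$; the stated hypothesis only gives this for $a>0$, so for $a<0$ one also needs $-U = U$, or one restricts to $a>0$.

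The first step is the identity
\[
(a\gamma).(\xi,h) = h^{-T}(a\gamma - \xi) = a\, h^{-T}(\gamma - a^{-1}\xi) = a\,\bigl(\gamma.(a^{-1}\xi,h)\bigr),
\]
which is valid because $a^{-1}\xi \in V$. It shows two things. First, $a\mathcal{O}_\gamma = \mathcal{O}_{a\gamma}$, so multiplication by $a$ induces a Borel bijection $B \mapsto aB$ of the orbit space. Second, the automorphism $\theta_a:(\xi,h)\mapsto(a^{-1}\xi,h)$ of $V\rtimes H^T$ intertwines the two orbit maps. By Lemma \ref{MF} the left Haar measure is $|\det(h^T|_V)|\,d\xi\,dh$. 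Substituting $\xi \mapsto a^{-1}\xi$ in the $V$-variable therefore gives $\mu_{V\rtimes H^T}(\theta_a^{-1}(E)) = |a|^{\dim V}\mu_{V\rtimes H^T}(E)$.

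The second step computes the orbit measures from their definition:
\[
\mu_{\mathcal{O}_{a\gamma}}(A) = \mu\bigl(\{(\xi,h): a\,\gamma.\theta_a(\xi,h)\in A\}\bigr) = \mu\bigl(\theta_a^{-1}\{(\eta,h):\gamma.(\eta,h)\in a^{-1}A\}\bigr) = |a|^{\dim V}\mu_{\mathcal{O}_\gamma}(a^{-1}A).
\]
I have to check that $\mu_{\mathcal{O}}$ does not depend on the base point. This follows from unimodularity: changing the base point right-translates the preimage set, which preserves Haar measure when the modular function is trivial. Since $\beta_{\mathcal{O}} = \mu_{\mathcal{O}}$ in the unimodular normalization, this is exactly the first relation $\beta_{\mathcal{O}_{a\gamma}} = |a|^{\dim V} a^*(\beta_{\mathcal{O}_\gamma})$.

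The third step derives the second relation. Fix a Borel set $B\subset U$. On one side, $\lambda(aB) = |a|^n\lambda(B)$. On the other side, decompose $\lambda(aB)$ over the orbits and change variables $\mathcal{O}' = a\mathcal{O}$ in the orbit space; this is where the pushforward of $\overline{\lambda}_V$ under multiplication by $a$ enters, which is the measure $\overline{\lambda}_a$ as defined in the statement. Inserting the first relation gives
\[
\lambda(aB) = \int \beta_{a\mathcal{O}}(aB)\,d\overline{\lambda}_a(\mathcal{O}) = |a|^{\dim V}\int \beta_{\mathcal{O}}(B)\,d\overline{\lambda}_a(\mathcal{O}).
\]
Comparing with $|a|^n\int\beta_{\mathcal{O}}(B)\,d\overline{\lambda}_V(\mathcal{O})$ for all $B$ leaves $\overline{\lambda}_a = |a|^{n-\dim V}\overline{\lambda}_V$ to be justified. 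I would justify it by the essential uniqueness of measure decompositions noted before Theorem \ref{th.3}. The pairs $(\overline{\lambda}_V,\beta)$ and $(|a|^{\dim V-n}\overline{\lambda}_a,\beta)$ are both decompositions of $\lambda$ with the same fibre measures, so the proportionality function $\alpha$ must equal $1$ almost everywhere.

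I expect the main obstacle to be this last step, because the direction of pushforward versus pullback in $\overline{\lambda}_a(B)=\overline{\lambda}_V(aB)$ must be tracked carefully to get the exponent's sign right. If the uniqueness argument is awkward, the fallback is to test the identity on sets $B$ saturated by orbits. For such $B$, $\beta_{\mathcal{O}}(B)$ equals the total mass of the fibre $\beta_{\mathcal{O}}$ when $\mathcal{O}\subset B$ and vanishes otherwise, so the orbit-space integrals reduce directly to $\overline{\lambda}_V$ and $\overline{\lambda}_a$ evaluated on the corresponding sets of orbits.
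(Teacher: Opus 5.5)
Your proposal is correct and follows the paper's proof. The first relation comes from $(a\gamma).(\xi,h)=a\,(\gamma.(a^{-1}\xi,h))$ together with the substitution $\xi\mapsto a^{-1}\xi$ in $|\det(h^T|_V)|\,d\xi\,dh$. The second comes from computing Lebesgue measure of dilates through the decomposition and comparing; the paper phrases this with $\int f_a\,d\lambda$ and $I_{\mathcal{O}}(f)$ and implicitly uses the same uniqueness. Your displayed identity $\lambda(aB)=|a|^{\dim V}\int\beta_{\mathcal{O}}(B)\,d\overline{\lambda}_a(\mathcal{O})$ has the right exponent, although $\overline{\lambda}_a$ is the image of $\overline{\lambda}_V$ under $\mathcal{O}\mapsto a^{-1}\mathcal{O}$, not under multiplication by $a$.

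Two minor corrections.
\begin{enumerate}
\item Base-point independence of $\mu_{\mathcal{O}}$ needs no unimodularity. For the right action, $\{g:(\gamma.g_0).g\in A\}=g_0^{-1}\{g:\gamma.g\in A\}$ is a \emph{left} translate, so left Haar measure is unchanged. Unimodularity of $G$, which is not the same as unimodularity of $V\rtimes H^T$, is what permits the normalization $\beta_{\mathcal{O}}=\mu_{\mathcal{O}}$.
\item Your saturated-set fallback would not work. Each fibre has total mass $\mu_{V\rtimes H^T}(V\rtimes H^T)$, which is infinite whenever $V\rtimes H^T$ is noncompact. Uniqueness therefore has to be tested on intersections of saturated sets with sets of finite Lebesgue measure.
\end{enumerate}
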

\begin{proof} The first equation is a straightforward result of the definition of $\beta_{\mathcal{O}}$.\\
(i) First notice for $a\in \mathbb{R}^*$, $a\mathcal{O}_\gamma=\mathcal{O}_{a\gamma}$:
\begin{equation*}
 \begin{aligned}
  a\mathcal{O}_\gamma&=& a\{ \gamma\cdot(\xi,h): (\xi,h)\in V \rtimes H^T\}, \\
 \mathcal{O}_{a\gamma}&=&\{ a\gamma\cdot(\xi,h): (\xi,h)\in V \rtimes H^T\}.
 \end{aligned}
 \end{equation*}
(ii) Let $A\subset \mathcal{O}_\gamma$, then 

\begin{equation*}
 \begin{aligned}
 \beta_{\mathcal{O}_{a^{-1}\gamma}}(A) &=&&\int_{\mathcal{O}_{a^{-1}\gamma}} \chi_A(x) \beta_{\mathcal{O}_{a^{-1}\gamma}}(x)\\
 &=&&\int_{V \rtimes H^T} \chi_A(a^{-1}\gamma\cdot(\xi,h))d\mu_{V \rtimes H^T}(\xi,h) \\
 &=&&\int_{V \rtimes H^T}\chi_{aA}(\gamma\cdot(a\xi,h)) d\mu_{V \rtimes H^T}(\xi,h)\\
  &=&&|a|^{-\dim V}\int_{V \rtimes H^T}\chi_{aA}(\gamma\cdot(\xi,h)) d\mu_{V \rtimes H^T}(\xi,h)\\
 &=&&|a|^{-\dim V}\int_{\mathcal{O}_{\gamma}}\chi_{aA}(x) d\beta_{\mathcal{O}_{\gamma}}(x)\\
 &=&&|a|^{-\dim V}\beta_{\mathcal{O}_{\gamma}}(aA),
 \end{aligned}
\end{equation*}
using that $|a|^{\dim V}$ equals the determinant of $a \cdot \text{id}_{V}$.
 
To show  $\overline{\lambda}_V(aB)=|a|^{n-\dim V}\overline{\lambda}_V(B)$, assume $f: U\rightarrow \mathbb{R}$ is a positive measurable function. Let $I_\mathcal{O}(f)$ denote the function on orbit space defined by
\[
 I_\mathcal{O}(f)(\mathcal{O}_\gamma):=\int_{\mathcal{O}_\gamma} f(x) d\beta_{\mathcal{O}_\gamma}(x).
\]
Moreover, let $f_a(x):=f(a^{-1}x)$, for all $x\in U\approx \mathbb{R}^n$ and $a\in \mathbb{R}^*$. The above computation implies
\begin{equation*}
 \begin{aligned}
  I_\mathcal{O}(f_a)(\mathcal{O}_\gamma)&=&& \int_{\mathcal{O}_\gamma} f(a^{-1}x) d\beta_{\mathcal{O}_\gamma}(x)\\
  &=&&|a|^{-\dim V} \int_{\mathcal{O}_{a^\gamma}} f(x) d\beta_{\mathcal{O}_{a\gamma}}(x)\\
  &=&& |a|^{-\dim V} I_\mathcal{O}(f)(\mathcal{O}_{a\gamma}).
 \end{aligned}
\end{equation*}

Now we compute the integral of $I_\mathcal{O}(f)$ on $\frac{U}{V \rtimes H^T}\simeq \mathcal{O}$:
\begin{equation*}
\begin{aligned}
 \int_{\mathcal{O}} I_\mathcal{O}(f)(\mathcal{O}_\gamma)d\overline{\lambda}_V(\mathcal{O}_\gamma)
 &=&& |a|^{n} \int_U f_a(x) d\lambda(x)\\
 &=&& |a|^{n-\dim V}|a|^{\dim V}\int_\mathcal{O} \int_{\mathcal{O}_\gamma} f_a(x) d\beta_{\mathcal{O}_\gamma}(x)d\overline{\lambda}_V(\mathcal{O}_\gamma)\\
 &=&& |a|^{n-\dim V}\int_\mathcal{O} |a|^{\dim V} I_\mathcal{O}(f_a)(\mathcal{O}_\gamma) d\overline{\lambda}_V(\mathcal{O}_\gamma)\\
 &=&& |a|^{n-\dim V}\int_\mathcal{O} I_\mathcal{O}(f)(\mathcal{O}_{a\gamma}) d\overline{\lambda}_V(\mathcal{O}_\gamma)\\
 &=&& |a|^{n-\dim V}\int_\mathcal{O} I_\mathcal{O}(f)(\mathcal{O}_{\gamma}) d\overline{\lambda}_a(\mathcal{O}_\gamma).\\
\end{aligned}
\end{equation*}
On the other hand,
\begin{equation*}
\begin{aligned}
 \int_{\mathcal{O}} I_\mathcal{O}(f)(\mathcal{O}_\gamma)d\overline{\lambda}_V(\mathcal{O}_\gamma)&=&&\int_U f(x) d\lambda(x)\\
 &=&& |a|^{n}\int_U f_a(x) d\lambda(x).
 \end{aligned}
\end{equation*}
\end{proof}
{\Corollary \label{cor:scaling_invariant} Assume that $G=\mathbb{T}\times \mathbb{R}^n\times V \times H$ is a unimodular translation complete subgroup of $G_{aWH}$, with $V \subsetneq \widehat{\mathbb{R}^n}$. Let $U \subset \widehat{\mathbb{R}^n}$ be a $V \rtimes H^T$-invariant Borel set of positive measure, satisfying $U=\mathbb{R}^+U$. Then $G$ is not admissible for $\mathcal{H}_U$.}

\begin{proof} There is nothing to show if $G$ is not even weakly admissible for $\mathcal{H}_U$. Assuming that $G$ is weakly admissible for $\mathcal{H}_U$, we make use of the measure decomposition described at the beginning of this subsection. By assumption, $\overline{\lambda}_{\kappa}(U/ (V \rtimes H^T)) >0$. Using scaling invariance of $U$ and Proposition \ref{P.4.1}, this entails
\[
\overline{\lambda}_{\kappa}(U/ (V \rtimes H^T)) = \overline{\lambda}_{\kappa}(a U/ (V \rtimes H^T)) = |a|^{n-\dim V} \overline{\lambda}_{\kappa}(U/ (V \rtimes H^T))~,
\] with $n -\dim V >0$. This is only possible if $\overline{\lambda}_{\kappa}(U/ (V \rtimes H^T)) = \infty$. Now the result follows from \eqref{th.3}. \end{proof}
At first sight the condition $V \subsetneq \widehat{\mathbb{R}^n}$ used in the Corollary may appear somewhat artificial, or primarily motivated by the proof technique. It is in fact indispensable: Recall that in the case $V = \widehat{\mathbb{R}^n}$, with trivial choice for $H$, we obtain the Heisenberg group, which is a unimodular, strongly admissible translation complete group.  

It is worthwhile singling out the special case $U = \mathbb{R}^n$ of the corollary:
\begin{Corollary} \label{cor:unimod_nonadm}
Assume that $G=\mathbb{T}\times \mathbb{R}^n\times V \rtimes H^T$ is a unimodular translation complete subgroup of $G_{aWH}$, with $V \subsetneq \widehat{\mathbb{R}^n}$. Then $G$ is not admissible for $L^2(\mathbb{R}^n)$.
\end{Corollary}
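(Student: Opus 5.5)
This is the special case $U = \widehat{\mathbb{R}^n}$ of Corollary \ref{cor:scaling_invariant}, so the plan is to check that corollary's hypotheses and then apply it. By the invariant-subspace theorem, $L^2(\mathbb{R}^n) = \mathcal{H}_U$ with $U = \widehat{\mathbb{R}^n}$. This $U$ is a Borel set of infinite, hence positive, Lebesgue measure. It is $V \rtimes H^T$-invariant, because the dual action maps $\widehat{\mathbb{R}^n}$ into itself. It also satisfies $\mathbb{R}^+ U = U$. Since $G$ is unimodular and $V \subsetneq \widehat{\mathbb{R}^n}$ by assumption, Corollary \ref{cor:scaling_invariant} applies and shows that $G$ is not admissible for $\mathcal{H}_U = L^2(\mathbb{R}^n)$.

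For completeness, I would recall the mechanism. If $G$ is not even weakly admissible for $L^2(\mathbb{R}^n)$, there is nothing to prove. Otherwise Theorem \ref{thm:char_weak_adm} provides the measure decomposition, and unimodularity gives the normalized measure $\overline{\lambda}_\kappa$ from (\ref{eqn:meas_decomp_unimod}). Dilating by $a>0$ leaves $\widehat{\mathbb{R}^n}$ invariant, and by Proposition \ref{P.4.1}
\[
\overline{\lambda}_\kappa\bigl(\widehat{\mathbb{R}^n}/(V \rtimes H^T)\bigr) = |a|^{n-\dim V}\, \overline{\lambda}_\kappa\bigl(\widehat{\mathbb{R}^n}/(V \rtimes H^T)\bigr).
\]
Here $n - \dim V > 0$ and the total mass is positive, so taking $a \neq 1$ forces the mass to be infinite. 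Theorem \ref{th.3} then excludes admissibility.

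The only point needing care is that $\overline{\lambda}_\kappa$ is only determined up to the normalization in (\ref{eqn:meas_decomp_unimod}). The scaling identity must therefore be applied to this specific normalized measure, which is exactly what Proposition \ref{P.4.1} does for the decomposition with orbit measures $\mu_{\mathcal{O}}$. Once the corollary is accepted, the argument is a direct specialization.
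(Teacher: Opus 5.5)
Your proposal is correct and follows the paper's route: the paper obtains this corollary as the special case $U=\widehat{\mathbb{R}^n}$ (written there as $U=\mathbb{R}^n$) of Corollary \ref{cor:scaling_invariant}, which is exactly what you do. Your recap of the mechanism also matches the paper's proof of that corollary: the scaling identity from Proposition \ref{P.4.1} for the normalized measure $\overline{\lambda}_\kappa$, with $n-\dim V>0$, forces infinite total mass, and Theorem \ref{th.3} then rules out admissibility.
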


We can also exclude discrete series subrepresentations in the nontrivial unimodular setting. The argument is another generalization of a result for the purely affine case, with a similar proof.  
{\Corollary \label{C.4.1} Assume that $G=\mathbb{T}\times \mathbb{R}^n\times V \times H$ is a unimodular translation complete subgroup of $G_{aWH}$, with $V \subsetneq \widehat{\mathbb{R}^n}$.  Then the quasi-regular representation $\pi$ does not contain a discrete series 
subrepresentation.}

\begin{proof}
Assuming the contrary, we obtain by Corollary \ref{cor:char_discrete_series} that the irreducible subspace is of the form $\mathcal{H}_{\mathcal{O}}$, for a single open $V \rtimes H^T$-orbit $\mathcal{O} \subset \widehat{\mathbb{R}^n}$. The desired contradiction will follow from Corollary \ref{cor:scaling_invariant}, once we have shown that $\mathbb{R}^+ \mathcal{O} = \mathcal{O}$.

For this purpose, denote by $U \subset \widehat{\mathbb{R}^n}$ the union of all open $V \rtimes H^T$-orbits. By Theorem definition, $U$ is an open subset. Furthermore, given any $\gamma \in \widehat{\mathbb{R}^n}$, $a>0$ and $(\xi, h) \in V \times H$ one has that 
\[  (a \cdot \gamma).(x,h) = h^{-T} (a \gamma - \xi) = a h^{-T} (\gamma - a^{-1} \xi)~.
\] Since $V$ is a vector space, this entails $(a \gamma).(V \rtimes H^T) = a (\gamma. (V \rtimes H^T))$. In particular, the orbit of $a \gamma$ is open precisely when the orbit of $\gamma$ is, for all $\gamma$, and this can be summarized by the statement that $\mathbb{R}^+ U = U$. 

Fixing any $\gamma \in \mathcal{O}$, $\mathbb{R}^+ \gamma$ is in fact  contained in a single \textit{connected component} of $U$. Since the open orbits partition $U$, and since $\gamma \in \mathcal{O}$, this already forces that $\mathbb{R}^+ \gamma \subset \mathcal{O}$, as desired. 
\end{proof}


The nonunimodular setting is in sharp contrast to the unimodular case, as it does not allow a canonical normalization of the measures in the measure decomposition. Hence there is also no natural finite measure condition that could be used to characterize  admissible group actions. As the following theorem shows, this is not due to a lack of appropriate proof techniques; instead, it turns out that the nonunimodular case is not bound by the same type of restrictions. 

\begin{theorem} \label{thm:adm_nonunimodular}
Let $G < G_{aWH}$ be a nonunimodular translation complete subgroup, and $\mathcal{H}_U \subset L^2(\mathbb{R}^n)$ a closed invariant subspace. Then $G$ is admissible for $\mathcal{H}_U$ if and only if $G$ is weakly admissible for $\mathcal{H}_U$. 
\end{theorem}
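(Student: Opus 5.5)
One direction is trivial: an admissible vector is in particular weakly admissible, since a nonzero multiple of an isometry is bounded and injective. For the converse, assume $G$ is weakly admissible for $\mathcal{H}_U$. By Theorem \ref{thm:char_weak_adm}, almost all stabilizers in $U$ are compact and $\lambda$ decomposes over the orbits. As in the unimodular discussion, we can then write
\[
d\lambda(x)=\kappa(x)\,d\mu_{\mathcal{O}_\gamma}(x)\,d\overline{\lambda}(\mathcal{O}_\gamma),
\]
where $\mu_{\mathcal{O}}$ is the image of the Haar measure of $V\rtimes H^T$ on the orbit. The difference from the unimodular case is that $\kappa$ is now \emph{not} constant on orbits. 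The plan is to exploit this non-constancy to build an admissible vector orbit by orbit, following the affine blueprint of \cite{Fuehr3,Fuehr4}.

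The first step mimics the normalization in the proof of Theorem \ref{th.3}. Starting from a weakly admissible $\psi_0$, put $\widehat{\psi_1}=\widehat{\psi_0}/\Phi_{\psi_0}^{1/2}$. Since $\Phi_{\psi_0}$ is $V\rtimes H^T$-invariant (by left invariance of Haar measure), this gives $\Phi_{\psi_1}\equiv 1$ a.e. on $U$. The only thing still needed is $\psi_1\in L^2$, that is, $\int_U |\widehat{\psi_1}|^2\,d\lambda<\infty$.

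The key freedom is the following. For any invariant function $c$ and any $(\xi,h)$, replacing $\widehat{\psi_1}$ on an orbit by a translate $\widehat{\psi_1}(\,\cdot\,.(\xi,h)^{-1})$ does not change the Calderón integral, because the Haar measure is left invariant. It does, however, rescale the $\mu_{\mathcal{O}}$-mass by the modular function. Choosing $(\xi,h)$ measurably in the orbit, the $\lambda$-norm contribution of that orbit becomes $\int |\widehat{\psi_1}(x)|^2\,\kappa(x.(\xi,h))\,d\mu(x)$, and $\kappa(x.(\xi,h))=\Delta(\xi,h)^{\pm1}\kappa(x)$ up to the $|\det h|$ factor. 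Nonunimodularity means $\Delta_{V\rtimes H^T}\cdot|\det h|^{-1}$ is a nontrivial character, so it takes arbitrarily small values.

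The concrete construction is this. Use the $\lambda$-transversal $C$ from Theorem \ref{thm:char_weak_adm}(iv). Partition the orbit space into countably many Borel pieces $B_k$ on which the orbit-wise norm $I(\mathcal{O})=\int_{\mathcal{O}}|\widehat{\psi_1}|^2\,d\beta_{\mathcal{O}}$ is finite and $\overline{\lambda}(B_k)<\infty$, so that the restricted norm is at most some $N_k$. On the orbits in $B_k$, apply a fixed group element $g_k$ whose scaling factor is at most $2^{-k}/N_k$. The resulting function $\widehat{\psi}$ still satisfies $\Phi_\psi\equiv1$, and its squared norm is bounded by $\sum_k 2^{-k}<\infty$. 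Corollary \ref{C3.1} then shows that $\psi$ is admissible.

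I expect the main obstacle to be verifying the transformation rule for $\kappa$ under the group action, namely that $\kappa(x.g)/\kappa(x)$ is exactly the nontrivial character $g\mapsto\Delta_{V\rtimes H^T}(g)/|\det h|$. One would need to check that this character coincides with the modular function of $G$ restricted to $H^1$ (which uses Lemma \ref{MF} and Lemma \ref{lem:Haar_meas_tc}), so that it is nontrivial precisely when $G$ is nonunimodular. Two further points require care: the measurability of the piecewise translation, and the fact that $\Phi_{\psi_0}$ is finite a.e., so that the $B_k$ are well defined.
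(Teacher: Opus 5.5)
Your overall plan matches the paper's: normalize a weakly admissible $\psi_0$ so that the Calder\'on function becomes $1$, split $U$ into countably many invariant pieces of finite norm, and on each piece precompose with a fixed group element whose modular weight is small enough to make the norms summable. The central step, however, is wrong as you state it, and this is a genuine gap.

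The Calder\'on integral $\Phi_\psi(\omega)=\int_{V\rtimes H^T}|\widehat{\psi}(\omega.k)|^2\,d\mu(k)$ uses the right action $\omega.(\xi,h)=h^T(\omega-\xi)$ and is taken with respect to \emph{left} Haar measure. This is exactly why $\Phi_{\psi_0}$ is constant on orbits. If you replace $\widehat{\psi_1}$ by $\widehat{\psi_1}(\,\cdot\,.g^{\pm1})$, the integrand becomes $|\widehat{\psi_1}(\omega.(k g^{\pm1}))|^2$. That is a \emph{right} translation in $k$, so $\Phi$ is multiplied by $\Delta_{V\rtimes H^T}(g)^{\mp1}$ instead of being preserved. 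Your two appeals to left invariance cannot both hold for all $(\xi,h)$ unless $V\rtimes H^T$ is unimodular. The first is that $\Phi_{\psi_0}$ is constant on orbits; the second is that $\Phi$ is unchanged when you translate $\widehat{\psi_1}$. Nonunimodularity of $G$ does not make $V\rtimes H^T$ unimodular: in Example \ref{ex:dim_two} with $a\neq 0$, $V\rtimes H^T$ is an $ax+b$ group. So the function you build has $\Phi_\psi=\Delta_{V\rtimes H^T}(g_k)^{\mp1}$ on the $k$-th piece and is not admissible.

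Your norm formula $\int|\widehat{\psi_1}|^2\kappa(x.g)\,d\mu(x)$ contains the same slip. It ignores that $\mu_{\mathcal{O}}$ is only relatively invariant, and the substitution produces exactly the same factor. Because both slips drop the same factor, the figures you state (Calder\'on integral $1$, norm rescaled by the nontrivial character) belong to the corrected construction, not to the one you describe.

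The repair is the one the paper makes: on the $k$-th piece use $\Delta_{V\rtimes H^T}(g_k)^{1/2}\,\widehat{\psi_1}(x.g_k)$. Your unspecified invariant factor $c$ is where this belongs. Then $\Phi_\psi\equiv 1$. For $g=(\xi,h)$, the norm on an invariant Borel piece is multiplied by $\Delta_{V\rtimes H^T}(g)/|\det h|$, which follows directly from the Lebesgue substitution $y=h^T(x-\xi)$. This ratio equals $\Delta_G(1,0,0,h)$, as you anticipated, and is therefore nontrivial.

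Consequently the transformation law of $\kappa$, which you flag as the main obstacle, is never needed. Neither are the measure decomposition and the transversal. The level sets $U_j=\{\omega\in U: 2^{-j-1}<\Phi_{\psi_0}(\omega)\le 2^{-j}\}$, $j\in\mathbb{Z}$, are invariant Borel sets that exhaust $U$ up to a null set, and $\int_{U_j}|\widehat{\psi_1}|^2\,d\lambda\le 2^{j+1}\|\psi_0\|_2^2$. This also fixes a small imprecision in your partition: finiteness of $I(\mathcal{O})$ together with $\overline{\lambda}(B_k)<\infty$ bounds the restricted norm only if $I$ is bounded on $B_k$.
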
 
\begin{proof} The "only-if" part is clear.
For the converse, suppose $\psi_0$ is a weakly admissible vector, and define $\Phi = \Phi_{\psi_0}$ as in Proposition \ref{L3.1}.
Pick a measure decomposition $(\overline{\lambda},\{\beta_{\mathcal{O}_\gamma}\}_{\gamma\in U})$ of Lebesgue measure $\lambda$ on $U$.
Since $G$ is nonunimodular, $\Delta_G$ is nontrivial on $V \rtimes H^T$, and there exists $(\xi_0,h_0)\in V \rtimes H^T$ 
such that $\Delta_G(\xi_0,h_0)<1/2$.\\
$\overline{\lambda}$ is a pseudo-image of $\lambda$, hence $\overline{\lambda}$ is a $\sigma$-finite measure on the orbit space, and we can write $U$ as a disjoint union $U=\bigcup_{n\in \mathbb{N}} V_n$ with 
$V_n\subset U$ Borel, $V\rtimes H^T$-invariant and with $\overline{\lambda}(V_n/(V\rtimes H))< \infty$.
Assume $\psi_0$ is a weakly admissible vector and 
\[\Phi(\gamma):= \left( \int_{V \rtimes H }|\widehat{\psi_0}(\gamma\cdot(\xi,h))|^2 d\mu_{V \rtimes H }(\xi,h)\right)^{1/2}.\]
Let $\widehat{\psi}(x)= \widehat{\psi_0}(x)/\Phi(x)$ and define
\[
 \Psi:\gamma\mapsto \left(\int_{\mathcal{O}_\gamma}|{\widehat{\psi}}(x)|^2 d\beta_{\mathcal{O}_\gamma}(x)\right)^{1/2}
\]
which is finite a.e. since $\widehat{\psi_0}\in L^2(\mathbb{R}^n)$ by Proposition \ref{L3.1}.

Additionally, we may assume $\Psi$ is bounded on each $V_n$. In particular, the functions $(1_{V_n}\ldotp \Psi)_{n\in\mathbb{N}}$ are square-integrable.
For all $V_n$, there exists $k_n\in \mathbb{N}$ such that 
\begin{equation}\label{eq:4-4}
  \frac{1}{2^{k_n}} ||1_{V_n}\cdot \Psi||_2^2< \frac{1}{2^n}.
\end{equation}
That is enough to pick $(\xi_0,h_0)$ such that $\Delta_{V\rtimes H}(\xi_0,h_0)<1/2$. 
Now define
\[
 \nu(\gamma):= \sum_{n\in \mathbb{N}}\Delta_{V\rtimes H}(\xi_0,h_0)^{k_n/2}\widehat{\psi}(\gamma\cdot(\xi_0,h_0)^{k_n}). 1_{V_n}(\gamma)
\]
for all $\gamma\in U $. In the following, we see that this function is the desired admissible vector:
\begin{equation*}
 \begin{aligned}
  \int_U |\nu(\xi)|^2 d\lambda(\xi)&=&&\int_{U/(V\rtimes H)}\int_{\mathcal{O}} |\nu(\xi)|^2 d\beta_{\mathcal{O}}(\xi)d\overline{\lambda}({\mathcal{O}})\\
   &=&&\sum_{n\in \mathbb{N}} \int_{V_n}\int_{\mathcal{O}}\Delta_{V\rtimes H}(\xi_0,h_0)^{k_n}|\widehat{\psi}(\gamma\cdot(\xi_0,h_0)^{k_n})|^2 d\beta_{\mathcal{O}}(\xi)d\overline{\lambda}({\mathcal{O}})\\
  &=&&\sum_{n\in \mathbb{N}} \int_{V_n}\int_{\mathcal{O}}\Delta_{V\rtimes H}(\xi_0,h_0)^{k_n}\delta(\xi_0,h_0)^{-k_n}|\widehat{\psi}(\xi)|^2 d\beta_{\mathcal{O}}(\xi)d\overline{\lambda}({\mathcal{O}})\\
  &=&&\sum_{n\in \mathbb{N}} \Delta_G(\xi_0,h_0)^{k_n}\int_{V_n}\int_{\mathcal{O}}|\widehat{\psi}(\xi)|^2 d\beta_{\mathcal{O}}(\xi)d\overline{\lambda}({\mathcal{O}})\\
  &=&&\sum_{n\in \mathbb{N}} \Delta_G(\xi_0,h_0)^{k_n} ||1_{V_n}\cdot \Psi||_2^2 \leq \sum_{n\in \mathbb{N}} 2^{-k_n}||1_{V_n}\cdot \Psi||_2^2<\infty,
 \end{aligned}
\end{equation*}
by choice of the $k_n$. Hence $\nu$ is square-integrable.

Moreover, the Calderon condition is also satisfied:
For a fixed $\gamma\in V_n$,
\begin{equation*}
 \begin{aligned}
 \int_{V\rtimes H} |\nu(\gamma\cdot(\xi,h))|^2d(\xi,h)&=&&\int_{V \rtimes H }|\psi(\gamma\cdot(\xi_0,h_0)(\xi,h))|^2 \Delta_{V\rtimes H}(\xi_0,h_0)d(\xi,h)\\
 &=&&\int_{V \rtimes H }|\psi(\gamma\cdot(\xi,h))|^2 d(\xi,h)=1. 
\end{aligned}
\end{equation*}
The inverse Plancherel transform of $\nu$ is then a well-defined element of $\mathcal{H}_U$, and it is admissible for $\mathcal{H}_U$.
\end{proof}

\begin{remark}
\label{rem:direct_sum}
As a consequence of Theorem \ref{thm:dec_open_orbits}, whenever $\pi$ has a discrete series subrepresentation, it already decomposes into finitely many irreducible subrepresentations corresponding to the finitely many open orbits. Not all of these are necessarily in the discrete series --for some open orbits the stabilizers may be noncompact--, but when they are, their sum $\pi$ is actually strongly square-integrable for the whole space $L^2(\mathbb{R}^n)$.

A direct way of seeing this is by realizing that for wavelets $\psi_1 \in \mathcal{H}_{\mathcal{O}_1}$ and $\psi_2 \in \mathcal{H}_{\mathcal{O}_2}$ associated to disjoint open orbits $\mathcal{O}_1, \mathcal{O}_2$, the associated wavelet transforms fulfill $\langle W_{\psi_1} f_1, W_{\psi_2} f_2 \rangle_{L^2(G)} = 0$, for all $f_1,f_2 \in L^2(\mathbb{R}^n)$, by a computation similar to the one proving Proposition \ref{L3.1}. One can then use this orthogonality relation to prove that picking admissible vectors for each irreducible subspace, with identical isometry constants, and taking their (finite) sum results in an admissible vector for the full representation space $L^2(\mathbb{R}^n)$. 

An indirect way of establishing strong admissibility uses Theorem \ref{thm:char_weak_adm} to conclude that it is weakly admissible, since there is a finite transversal of orbits with compact stabilizers. Furthermore, by Corollary \ref{C.4.1} the group is necessarily nonunimodular, and then the representation is indeed strongly square-integrable by Theorem \ref{thm:adm_nonunimodular}. 
\end{remark}

We now employ our characterizations to resume the discussion of various special cases that we already mentioned in the introductory sections. 
\begin{example} \label{ex:main_examples}
Combining $V = \widehat{\mathbb{R}^n}$ with any closed matrix group $H$ yields a translation complete subgroup $G = \mathbb{T} \times \mathbb{R}^n \times V \times H$. This group has a single open orbit $\mathcal{O} = \widehat{\mathbb{R}^n}$, and the stabilizer group associated to $0 \in \mathcal{O}$ is $\{ 0 \} \times H$. Hence it turns out that $G$ is weakly admissible if and only if it is admissible, if and only if $H$ is compact. The resulting group $G$ can then be understood as a compact extension of the reduced Heisenberg group.

In particular, the full affine Weyl-Heisenberg group fulfills neither admissibility condition, which is the main reason (already observed in \cite{Torresani1}) why one has to restrict the representation, using suitable subsets or cross-sections, in order to obtain inversion formulae and continuous frames. 

On the other hand, combining a proper subspace $V \subsetneq \widehat{\mathbb{R}^n}$ with a \textit{trivial} subgroup $H = \{ I_n \}$ yields a unimodular translation complete subgroup. The orbit space on $\widehat{\mathbb{R}^n}$ is equal to the quotient vector space $\widehat{\mathbb{R}^n}/V$, with trivial stabilizers, which establishes that $G$ is weakly admissible for $L^2(\mathbb{R}^d)$. Furthermore, one can show that the measure $\overline{\lambda}_{\kappa}$ normalized according to (\ref{eqn:meas_decomp_unimod}) in fact coincides with the standard Lebesgue measure on the quotient vector space $\widehat{\mathbb{R}^n}/V$ (which is positive dimension), hence is infinite. This entails that this group is not strongly admissible for $L^2(\mathbb{R}^n)$.

Finally, as we already noted above, the case where $V = \{ 0 \}$ corresponds precisely to the fairly well studied setting of generalized, affine wavelets \cite{Taylor,Fuehr4,MR3319960,MR3319960}. Hence the truly novel constructions arise from a combination of $\{ 0 \} \subsetneq V \subsetneq \widehat{\mathbb{R}^n}$ with $\{ I_n \} \subsetneq H \subsetneq GL(n,\mathbb{R})$. Section \ref{sect:examples} below will present a diverse list of examples in dimensions two and three.
\end{example}

\begin{remark}
    The sharp dichotomy between the unimodular and nonunimodular cases, expressed in Theorems \ref{th.3} and \ref{thm:adm_nonunimodular}, is analogous to the results in \cite{MR1908919}. This source studies subrepresentations of the regular representation $\lambda_G$ of a locally compact group, assuming $\lambda_G$ is of type I, and shows that such subrepresentations allow an admissible vector if and only if the group is unimodular, and the associated subspace fulfilled a certain finite measure condition (\cite[Theorem 1.6] {MR1908919}, comparable to our Theorem \ref{th.3}), or if the group is nonunimodular, with no additional conditions imposed (\cite[Corollary 0.3]{MR1908919}, comparable to our Theorem \ref{thm:adm_nonunimodular}). 

    This analogy is not a coincidence, and has been successfully studied and deepened in the affine case, by explicitly computing the Plancherel measure of $G$ and relating it to the measure decomposition used in this section; see \cite{Fuehr4} for details. We expect that this can also be achieved for the more general setting studied here, but that endeavor goes well beyond the goals of the current paper. 
\end{remark}

\section{Reproducing subgroups and the metaplectic representation}

\label{sect:metaplectic}
The metaplectic group and its subgroups have been recognized as another convenient setting for the discussion of generalized wavelet systems and associated reproducing formulae, encompassing both the windowed Fourier transform and the purely affine setting as well as a variety of intermediate cases, see e.g. \cite{Cordero,DeMari,Tabacco,Alberti}. In this section we will show how our setting fits into the landscape of the metaplectic group, and compare our admissibility criterion to the Wigner-transform based criterion derived in \cite{DeMari}.

\subsection{Metaplectic representation}

The metaplectic representation is a projective representation of the \textit{symplectic group} that arises from the action of this group on the Heisenberg group. The following exposition of the symplectic group, the metaplectic representation and the \textit{extended} metaplectic representation are somewhat sketchy; we refer to the sources \cite{Folland2,DeMari} for more background information.  

By definition, the \textit{symplectic group} is given as 
\[
 Sp(n,\mathbb{R})=\{A\in GL(2n,\mathbb{R}): A^t\mathcal{J}A=\mathcal{J}\},
\]
where
\[
 \mathcal{J}=\left({\begin{array}{cc}
0 & I_n\\
I_n & 0
\end{array}}
\right).
\]

Direct calculation shows that $B \in Sp(n,\mathbb{R})$ acts on the (reduced) Heisenberg group $\mathbb{H} = \mathbb{T} \times \mathbb{R} \times \widehat{\mathbb{R}^n} \subset G_{aWH}$ by group automorphisms defined by 
\begin{equation} \label{eqn:sympl_autom}
(z,x,\xi) \mapsto (z,B(x,\xi))~;
\end{equation} Here we identify $(x,\xi) \in \mathbb{R}^n \times \widehat{\mathbb{R}^n}$ with a vector in $\mathbb{R}^{2n}$, by simple concatenation. If we let $Z = \mathbb{T} \times \{ 0 \} \times \{0 \} \subset\mathbb{H}^n$ denote the center of $\mathbb{H}^n$, then the above map turns is an automorphism that leaves $Z$ pointwise fixed. As a consequence of this fact, together with the Stone-von-Neumann theorem and Schur's Lemma, one obtains a map
\[
\mu: Sp(n,\mathbb{R}) \to \mathcal{U}(L^2(\mathbb{R}^n))
\] such that 
\[
\forall (z,x,\xi) \in \mathbb{H}^n \forall A \in  Sp(n,\mathbb{R})~:~ \pi(z,A(x,\xi)) = \mu(A) \pi(z,x,\xi) \mu(A)^*~. 
\]
Note that strictly speaking this equation only determines $\mu(A)$ up to complex scalars of modulus one; there are however special choices available \cite{Folland2}. The map $\mu: A \mapsto \mu(A)$ is the \textit{metaplectic representation} of $Sp(n,\mathbb{R})$. It is only a \textit{projective} representation, i.e. one has $\mu(A) \mu(B) \mu(AB)^* \in \mathbb{T} \cdot \mathrm{id}_{L^2(\mathbb{R}^n)}$, for all $A,B \in Sp(n,\mathbb{R})$.

We next consider the \textit{extended metaplectic group}, i.e. the semidirect product $\mathcal{G} = \mathbb{H}^n \rtimes SP(n,\mathbb{R})$, which is defined using the action (\ref{eqn:sympl_autom}). Then the definition of the metaplectic representation immediately entails that the \textit{extended metaplectic representation} $\mu_e: \mathbb{H}^n \rtimes Sp(n,\mathbb{R}) \to \mathcal{U}(L^2(\mathbb{R}^n))$, defined by 
\[
\mu_e(z,x,\xi, B) = \pi(z,x,\xi) \mu_e(B)~
\] is again a (strongly continuous) projective representation. 

The affine Weyl-Heisenberg group and its canonical representation fits into this framework as follows: The map $\beta: {\rm GL}(n,\mathbb{R})\hookrightarrow Sp(n,\mathbb{R})$, defined by
$h\mapsto \left({\begin{array}{cc}
h & 0\\
0 & h^{-T}
\end{array}}
\right)$ is easily verified to be an injective group homomorphism. This then gives rise to another group embedding $\alpha: G_{aWH} \hookrightarrow  \mathbb{H}^n \rtimes Sp(n,\mathbb{R})$, via $\alpha(z,x,\xi,h) = ((z,x,\xi),\beta(h))$. As a further pleasant surprise, one can verify (possibly after adjusting phase terms in the definition of $\mu_e$) the equation
\[
\mu_e (\beta(h)) = D_h = \pi(1,0,0,h)~, 
\] see e.g. \cite[Section 3.3]{Tabacco}. This finally leads to the realization that 
\begin{equation} \label{eqn:meta_vs_canon}
\mu_e \circ \alpha = \pi~. 
\end{equation}In summary, \textit{there is a canonical embedding $G_{aWH} \hookrightarrow \mathbb{H}^n \rtimes Sp(n,\mathbb{R})$, and the restriction of $\mu_e$ to the subgroup $G_{aWH}$ coincides with $\pi$!} 

Here we need to point out that the cited sources \cite{DeMari,Tabacco,Cordero,Alberti} work with slightly different versions of the Heisenberg group; they either use the \textit{simply connected} Heisenberg group (which has noncompact center), or the quotient group $\mathbb{R}^{2n} \cong \mathbb{H}^n/Z$, whereas we prefer to use the Heisenberg group with compact center. These differences, however, are immaterial for the following considerations. 

\subsection{Reproducing subgroups and associated admissibility conditions}

Based on the preliminaries given in the previous section, we can now present the pertinent definition of reproducing subgroups from \cite{DeMari}.
\begin{definition} \label{defn:repro_metaplectic}
    A connected Lie subgroup $H$ of $\mathbb{H}^n \rtimes Sp(n,\mathbb{R})$ is called  \emph{reproducing subgroup} for $\mu_e$ if there exists a function 
$\psi\in \rm L^2(\mathbb{R}^n)$ such that 
\begin{equation}\label{E5.1}
 \| f \|_2^2 = C_\psi \int_H \left| \langle f,\mu_e(h)\psi\rangle \right|^2 dh, ~~~\forall f\in \rm L^2(\mathbb{R}^n),
\end{equation}
with a constant $C_\psi>0$.
A function $\psi$ giving rise to this norm equality is called \emph{reproducing function}.
\end{definition}

As a consequence of the canonical embedding $G_{aWH} \hookrightarrow \mathbb{H}^n \rtimes SP(n,\mathbb{R})$ and the compatibility of extended and canonical representations (see above), we immediately see that a (connected) translation complete subgroup $G = \mathbb{T} \times \mathbb{R}^n \times V \times H < G_{aWH}$ is admissible for $L^2(\mathbb{R}^n)$ if and only if it is reproducing in the sense of Definition \ref{defn:repro_metaplectic}, when viewed as a subgroup of $\mathbb{R}^n \rtimes SP(n,\mathbb{R})$. Furthermore, the reproducing functions are precisely the admissible vectors. 

It is reasonable to expect that the connectedness assumption in Definition \ref{defn:repro_metaplectic} can be discarded without jeopardizing any of the subsequent arguments; however, since this assumption was always made in the previous sources, we will adopt it for the remainder of this section. 
We next describe an admissibility criterion  for subgroups of $\mathbb{H}^n \rtimes Sp(n,\mathbb{R})$ via the Wigner
distribution, as provided in \cite{DeMari}. For this purpose, we first need the definition of the Wigner distribution itself. Given $f \in L^2(\mathbb{R}^n)$, its Wigner distribution is the function $W_f: \mathbb{R}^n \times \mathbb{R}^n \to \mathbb{C}$, defined as 
\[
W_f(x,\xi) = \int_{\mathbb{R}^n} e^{-2\pi i \langle \xi,y \rangle}f(x+\frac{y}{2})\overline{f(x-\frac{y}{2})} dy~. 
\]

We also need the canonical action of $\mathbb{H}^n \rtimes Sp(n,\mathbb{R})$ on $\mathbb{R}^{n} \times \mathbb{R}^n$, which is given by 
\[
(z,x,\xi,h) . (y_1,y_2) = h(y_1,y_2) + (x,\xi)~, (z,x,\xi,h) \in \mathbb{T} \times \mathbb{R^n} \times \widehat{\mathbb{R}^n} \times Sp(n,\mathbb{R})
\] where we again identified vectors in $\mathbb{R}^n \times \widehat{R}^n$ with elements of $\mathbb{R}^{2n}$.

One then obtains the following Wigner-function based admissibility condition \cite[Theorem 1]{MR2224394}:
\begin{theorem} \label{P:4-1}  Assume that $H < \mathbb{H}^n \rtimes Sp(n,\mathbb{R})$ is a connected subgroup. Let $\psi\in \rm L^2(\mathbb{R}^n)$ be such that the mapping
\begin{equation}\label{eq:4-1}
  h\mapsto W_{\mu_e(h)\psi}(x,\xi)=W_\psi(h^{-1}\cdot(x,\xi))
\end{equation}
is in $\rm L^1(H)$ for a.e. $(x,\xi)\in \mathbb{R}^{2n}$, and 
\begin{equation}\label{eq:4-2}
 \int_H  |W_\psi(h^{-1}\cdot(x,\xi))|dh\leq M, ~~~~\text{ for a.e. } (x,\xi)\in \mathbb{R}^{2n}. 
\end{equation}
Then, the reconstruction formula \eqref{E5.1} holds for all $f\in \rm L^2(\mathbb{R}^n)$ if and only if 
\begin{equation}\label{eq:4-3}
  \int_H  |W_\psi(h^{-1}\cdot(x,\xi))|dh=1, ~~~~ \text{ for a.e. } (x,\xi)\in \mathbb{R}^{2n}.
\end{equation}
\end{theorem}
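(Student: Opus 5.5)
The plan is to derive the theorem from the orthogonality relations for matrix coefficients, together with the standard expression of the trace of a rank-one-type operator through Wigner distributions. The central identity is Moyal's formula,
\[
\langle W_f, W_g \rangle_{L^2(\mathbb{R}^{2n})} = |\langle f,g\rangle|^2 ,
\]
combined with the covariance property (\ref{eq:4-1}), $W_{\mu_e(h)\psi} = W_\psi(h^{-1}\cdot\,\cdot)$. The covariance property follows from the intertwining relation $\mu_e(h)\pi(z,x,\xi)\mu_e(h)^* = \pi(z,h(x,\xi))$. It also uses the fact that translations by $(x,\xi)$ act on Wigner functions by translation of the argument; the phases of $\mu_e$ cancel in the sesquilinear expression $W_f$.

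Fix $f$ first in a dense class, say Schwartz functions, so that $W_f \in L^1 \cap L^2(\mathbb{R}^{2n})$. By Moyal's formula,
\[
\int_H |\langle f,\mu_e(h)\psi\rangle|^2\,dh = \int_H \int_{\mathbb{R}^{2n}} W_f(x,\xi)\,\overline{W_\psi(h^{-1}\cdot(x,\xi))}\,d(x,\xi)\,dh .
\]
Hypothesis (\ref{eq:4-2}) gives
\[
\int_H \int |W_f|\,|W_\psi(h^{-1}\cdot)|\,d(x,\xi)\,dh \le M\|W_f\|_1 < \infty ,
\]
so Fubini applies. Defining $\Omega(x,\xi) = \int_H \overline{W_\psi(h^{-1}\cdot(x,\xi))}\,dh$, which is bounded by $M$ and well defined a.e. by (\ref{eq:4-1}), we obtain
\[
\int_H |\langle f,\mu_e(h)\psi\rangle|^2\,dh = \int_{\mathbb{R}^{2n}} W_f\,\Omega .
\]
Note that $W_\psi$ is real valued, so the conjugation is harmless.

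For the "if" direction, assume (\ref{eq:4-3}). Since $W_\psi$ is real, I interpret the integrand of (\ref{eq:4-3}) as $W_\psi$ itself, or equivalently ensure $\Omega \equiv 1$. Then the right-hand side equals $\int W_f = \|f\|_2^2$ by the marginal property of the Wigner distribution. This gives (\ref{E5.1}) on the dense class. It extends to all of $L^2$ because $f\mapsto V_\psi f$ is then isometric on a dense subspace, and a Fatou argument shows the extension is the pointwise matrix coefficient.

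For the "only if" direction, assume (\ref{E5.1}) with constant normalized to $1$. Then $\int W_f(\Omega-1) = 0$ for all Schwartz $f$. By polarization, the cross-Wigner distributions $W(f,g)$ also integrate to zero against $\Omega-1$. The cross-Wigner transforms of Schwartz pairs span a dense subspace of $\mathcal{S}(\mathbb{R}^{2n})$: under the partial Fourier transform and linear change of variables defining $W(f,g)$, they correspond to tensors $f\otimes\bar g$. Since $\Omega - 1 \in L^\infty$ annihilates a dense subspace of $L^1$, we get $\Omega = 1$ a.e., which is (\ref{eq:4-3}).

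The main obstacle is the absolute-value issue. The statement literally writes $|W_\psi|$ in (\ref{eq:4-3}), whereas the computation naturally produces $\int_H W_\psi(h^{-1}\cdot)\,dh$. I would therefore prove the result with $\Omega$ defined without absolute values, using $|W_\psi|$ only in the integrability hypotheses (\ref{eq:4-1})–(\ref{eq:4-2}) that justify Fubini. I would then remark that this matches the formulation in \cite{MR2224394}.

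The density and measurability step that passes from Schwartz $f$ to all of $L^2$ is routine but must be handled carefully. For it I would use the boundedness of $\Omega$ together with the identity $\int W_f\,\Omega = \langle T_\Omega f, f\rangle$, where $T_\Omega$ is the Weyl operator with bounded symbol $\Omega$.
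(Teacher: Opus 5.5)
Your argument is correct. The paper gives no proof of its own and simply quotes \cite{MR2224394}, and your route---Moyal's formula with Fubini justified by (\ref{eq:4-2}), the marginal identity $\int W_f=\|f\|_2^2$ for the ``if'' part, and density of cross-Wigner distributions of Schwartz pairs in $L^1(\mathbb{R}^{2n})$ for the ``only if'' part---is the standard argument behind that cited result. You are also right that (\ref{eq:4-3}) has to be read without the absolute value and with $C_\psi=1$ in (\ref{E5.1}): for $H=\mathbb{H}^n$ and any odd unit-norm Schwartz $\psi$, the reproducing formula holds with constant $1$, while $\int_H|W_\psi(h^{-1}\cdot(x,\xi))|\,dh=\|W_\psi\|_{L^1}>1$ because $W_\psi(0,0)<0$. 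You can drop your closing Weyl-operator remark: the Cauchy/Fatou extension already suffices, and a merely bounded symbol need not give an $L^2$-bounded operator.
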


\begin{remark} We note that the technical assumptions \eqref{eq:4-1} and \eqref{eq:4-2} are not necessary for the characterization of reproducing vectors; there exist reproducing vectors that do not fulfill these requirements. More details are given in \cite{Tabacco}.
 \end{remark}

\subsection{Comparing admissibility conditions}

We can now work out how the different admissibility criteria for (connected) translation complete subgroups $G = \mathbb{T} \times \mathbb{R}^n \times V \times H$ relate to each other. The following result provides a direct link between the functions on which these criteria are based. The result was already known for the affine case; see \cite[Proposition 9]{Tabacco}.

\begin{Proposition} \label{ref:prop_adm_cond_coincide} Let $G = \mathbb{T} \times \mathbb{R}^n \times V \times H$ be a connected translation complete subgroup of $G_{aWH}$, embedded canonically into $\mathbb{H}^n \rtimes SP(n,\mathbb{R})$. Let $\psi\in L^1(\mathbb{R}^n)\cap L^2(\mathbb{R}^n)$. Then the wavelet admissibility condition and the Wigner admissibility condition
coincide, that is
\begin{equation} \label{eqn:adm_cond_eq}
 \int_{G} W_\psi((z,p,q,h)^{-1}\cdot(x,\xi))d(z,p,q,h){=}\int_{V\rtimes H}  |\widehat\psi(h^T(\xi-q))|^2 d\mu(q,h)
\end{equation}
holds for all $x \in \mathbb{R}^n$ and almost all $\xi\in \mathbb{R}^{n}$. 
\end{Proposition}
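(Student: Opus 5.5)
Our plan is to compute the left-hand side of (\ref{eqn:adm_cond_eq}) directly, using the Haar measure from Lemma \ref{lem:Haar_meas_tc}, and to reduce it to the right-hand side via the marginal property of the Wigner distribution. First we determine how $(z,p,q,h)^{-1}$ acts on phase space. Under the embedding $\alpha$, the element $(z,p,q,h)$ acts on $(y_1,y_2)$ by $(hy_1+p,\ h^{-T}y_2+q)$. Its inverse therefore acts by
\[(x,\xi)\mapsto (h^{-1}(x-p),\ h^{T}(\xi-q)).\]
The $z$-variable drops out, and since $dz$ is normalized, the $\mathbb{T}$-integral contributes the factor $1$. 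The left-hand side thus becomes
\[
\int_V\int_H\int_{\mathbb{R}^n} W_\psi\bigl(h^{-1}(x-p),h^T(\xi-q)\bigr)\,dp\,|\det(h^T|_V)|\,\frac{dh}{|\det h|}\,dq .
\]

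The key step is the inner $p$-integral. We substitute $y=h^{-1}(x-p)$, so that $dp=|\det h|\,dy$. This Jacobian cancels the factor $|\det h|^{-1}$ in the Haar measure, and the dependence on $x$ disappears. We then apply the frequency marginal identity
\[\int_{\mathbb{R}^n}W_\psi(y,\eta)\,dy=|\widehat\psi(\eta)|^2,\]
with $\eta=h^T(\xi-q)$. What remains is
\[\int_V\int_H|\widehat\psi(h^T(\xi-q))|^2\,|\det(h^T|_V)|\,dh\,dq,\]
which is the right-hand side with $d\mu(q,h)$ the left Haar measure of $V\rtimes H^T$ from Lemma \ref{MF}. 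This also explains why the identity holds for every $x$.

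The main obstacle is justifying the interchange of integrals and the marginal identity pointwise, since $W_\psi$ need not be integrable. The hypothesis $\psi\in L^1\cap L^2$ is what we use here. We plan to write $W_\psi(y,\eta)$, via Parseval in the $y$-variable, as the Fourier-side expression
\[\int \widehat\psi(\eta+\tfrac{\omega}{2})\overline{\widehat\psi(\eta-\tfrac{\omega}{2})}\,e^{2\pi i\langle y,\omega\rangle}\,d\omega .\]
Integrating in $y$ then produces a delta at $\omega=0$ in the distributional sense, which yields $|\widehat\psi(\eta)|^2$ for every $\eta$, because $\widehat\psi$ is continuous when $\psi\in L^1$.

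To make this rigorous, we would insert a Gaussian damping factor $e^{-\epsilon|y|^2}$, evaluate the $y$-integral exactly, and let $\epsilon\to0$ using continuity and boundedness of $\widehat\psi$. We expect the $y$-integral to converge only in this regularized or iterated sense, so the equality should be read with the $p$-integral performed first. For the outer $(q,h)$-integral the integrand is nonnegative, so Tonelli applies and both sides may be infinite simultaneously. Finally, ``almost all $\xi$'' comes from the measure-theoretic bookkeeping of these iterated integrals.
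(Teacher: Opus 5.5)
Your computation is the paper's: the same inverse action $(x,\xi)\mapsto(h^{-1}(x-p),h^T(\xi-q))$, the substitution $u=h^{-1}(x-p)$ whose Jacobian cancels $|\det h|^{-1}$ in the Haar measure, and the frequency marginal evaluated at $\eta=h^T(\xi-q)$.

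The gap is in the justification, where you misjudge what the hypothesis $\psi\in L^1$ gives you. You expect $\int W_\psi(y,\eta)\,dy$ to exist only in a regularized sense. Your Gaussian-damping argument is correct as far as it goes, but it only yields $\lim_{\epsilon\to0}\int e^{-\epsilon|y|^2}W_\psi(y,\eta)\,dy=|\widehat\psi(\eta)|^2$. Nothing in your plan removes the damping. What you would prove is therefore a summability version of the identity, which is weaker than the statement, since there the $p$-integral is an ordinary integral.

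The missing observation is elementary. First, $|W_\psi(y,\eta)|\le\int|\psi(y+\tfrac{t}{2})|\,|\psi(y-\tfrac{t}{2})|\,dt$. Second, the substitution $(a,b)=(y+\tfrac{t}{2},y-\tfrac{t}{2})$ has unit Jacobian. Together these give
\[
\int_{\mathbb{R}^n}\int_{\mathbb{R}^n}\bigl|\psi(y+\tfrac{t}{2})\bigr|\,\bigl|\psi(y-\tfrac{t}{2})\bigr|\,dt\,dy=\|\psi\|_1^2<\infty ,
\]
so $W_\psi(\cdot,\eta)\in L^1(\mathbb{R}^n)$ for every $\eta$, with norm at most $\|\psi\|_1^2$.

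This is exactly what the paper invokes, together with the marginal property it quotes from Folland. Fubini lets you swap the $y$- and $t$-integrations. The inner integral then becomes the autocorrelation $t\mapsto\int\psi(u)\overline{\psi(u-t)}\,du\in L^1(\mathbb{R}^n)$, and its Fourier transform at $\eta$ is $|\widehat\psi(\eta)|^2$ pointwise, by the convolution theorem.

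With this fact you can drop the Fourier-side detour entirely, or keep it and remove the damping by dominated convergence. Either way the marginal holds as a genuine Lebesgue integral for every $\eta$. The identity then holds for every $(x,\xi)$, not just almost every $\xi$, read as an iterated integral with the $p$-integral first. That is also how the paper's chain of equalities reads the left-hand side.

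One caveat applies to both you and the paper. Tonelli only covers the outer $(q,h)$-integral. Since $W_\psi$ changes sign, neither argument shows that the full integral over $G$ converges absolutely.
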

\begin{proof}
    If $\psi\in L^1(\mathbb{R}^n)\cap L^2(\mathbb{R}^n)$, we have $W_\psi\in \mathcal{C}(\mathbb{R}^{2n})$ and 
$W_\psi(.,\xi)\in \rm L^1(\mathbb{R}^{n})$, for each $\xi\in \mathbb{R}^n$, and the so-called Wigner marginal property 
\begin{equation*}
 \int_{\mathbb{R}^n} W_f(x,\xi)d\xi=|f(x)|^2,~~~~\int_{\mathbb{R}^n} W_f(x,\xi)dx=|\widehat f(\xi)|^2
\end{equation*} 
holds, see \cite[1.96]{Folland2}. The following straightforward calculation shows the coincidence:
\begin{eqnarray*}
  \lefteqn{\int_G W_\psi((z,p,q,h)^{-1}\cdot(x,\xi))d(p,q,h)} \\ &=& \int_V\int_H\int_{\mathbb{R}^n} W_\psi(h^{-1}(x-p),h^T(\xi- q))dpdq|{\rm det}(h^T|_V)|\frac{dh}{|{\rm det}(h)|}\\
 &=& \int_V\int_H\int_{\mathbb{R}^n} W_\psi(u,h^T(\xi- q)) du dq |{\rm det}(h^T|_V)| dh\\
 &=& \int_V\int_H\int_{\mathbb{R}^n}\int_{\mathbb{R}^n} e^{-2\pi i \langle h^T(\xi-q),y \rangle}\psi(u+\frac{y}{2})\overline{\psi(u-\frac{y}{2})} dy du dq |{\rm det}(h^T|_V)| dh\\
 &=& \int_V\int_H\int_{\mathbb{R}^n} e^{-2\pi i \langle h^T(\xi-q),y \rangle}\left(\int_{\mathbb{R}^n} \psi(u)\overline{\psi(u-y)}du\right)  dy dq |{\rm det}(h^T|_V)| dh\\
 &=&\int_V\int_H\int_{\mathbb{R}^n} e^{-2\pi i \langle h^T(\xi-q),y \rangle}\left( \widehat{\psi\ast \psi^\ast}(y)\right) dy dq |{\rm det}(h^T|_V)| dh\\
 &=& \int_V\int_H (\widehat{\psi\ast \psi^\ast})(h^T(\xi-q)) dq |{\rm det}(h^T|_V)| dh\\
 &=& \int_{V\rtimes H}|\widehat\psi(h^T(\xi-q))|^2 d\mu(q,h).
\end{eqnarray*}
\end{proof}
We close this comparison with several observations: Firstly, our admissibility condition in Corollary \ref{C3.1} is sharper in the sense that it does not need additional technical assumptions (such as (\ref{eq:4-1}) and (\ref{eq:4-2}) above), which are not strictly related to admissibility. Secondly, the question whether vectors exist which fulfill these criteria appears to be very difficult in the case of Theorem \ref{P:4-1}; in general, constructing functions $f$ with the aim of guaranteeing certain properties of their Wigner functions is a difficult undertaking. By contrast, the results from Section \ref{sect:adm_criteria} allow to assess, in a sharp way, which groups allow the existence of vectors fulfilling our admissibility criteria. 

 The obvious counterpoint to these observations is that, by construction, our preferred setting of translation complete subgroups is more restrictive than \cite{Cordero,DeMari,Tabacco}, and more stringent results are therefore to be expected. 

\section{Concrete examples in low dimensions}\label{sect:examples}

In this section, we present a list of novel examples in dimensions two and three, with the aim of showing that there is a variety of yet unexplored wavelet systems that arise from these group actions. The examples also demonstrate that the criteria developed in Section \ref{sect:adm_criteria} allow an efficient but nuanced appraisal of the different cases. The fact that most of the examples given below actually cover whole \textit{families} of groups, parameterized by one or several real parameters, emphasizes that this novel class of constructions is quite rich. 

We also want to point out that, to the extent that we have been able to verify, none of the following examples are covered in the previous sources \cite{DeMari,Cordero,MR2224394,Tabacco,Alberti,MR3008561}.

As explained in Example \ref{ex:main_examples}, we focus on translation complete subgroups $G = \mathbb{T} \times \mathbb{R}^n \times V \times H$ with subspaces $\{ 0 \} \subsetneq V \subsetneq \widehat{\mathbb{R}^n}$ and $\{ I_n \} \subsetneq H \subsetneq GL(n,\mathbb{R})$. In the interest of restricting this discussion to special choices of $V$, it is beneficial to first clarify the role of conjugacy within $G_{aWH}$.
Recall that two subgroups $G_1,G_2$ of the same group $G$ are called \textbf{conjugate within} $G_{a}$ if there exists $x \in G$ with $x^{-1} G_1 x = G_2$. 
The following lemma records how the properties that we are interested in behave under conjugacy. 
\begin{lemma}\label{lem:covariance_conjugacy}
    Let $G_1 = \mathbb{T} \times \mathbb{R}^n \times V_1 \times H_1$ and $G_2 = \mathbb{T} \times \mathbb{R}^n \times V_2 \times H_2$ be translation complete subgroups. 
    \begin{enumerate}
        \item[(a)] $G_1$ and $G_2$ are conjugate within $G_{aWH}$ if and only if there exists $g \in GL(n,\mathbb{R})$ such that $(1,0,0,g)^{-1} G_1 (1,0,0,g) = G_2$. This equation is equivalent to the equalities $V_2 = g^{T} V_1$ and $H_2 = g^{-1} H_1 g$ holding simultaneously.
        \item[(b)] Assume that $(1,0,0,g)^{-1} G_1 (1,0,0,g) = G_2$. Let $U \subset \widehat{\mathbb{R}^n}$ be $V_1 \rtimes H_1^T$-invariant. Then $g^{T} U \subset \widehat{ \mathbb{R}^n}$ is $V_2 \rtimes H_2^T$-invariant. If $\psi \in \mathcal{H}_U$ is (weakly) admissible for the action of $G_1$ on $\mathcal{H}_U$, then $\pi(1,0,0,g) \psi$ has the same property for the action of $G_2$ on $\mathcal{H}_{g^T U}$. The quasi-regular representation of $G_1$, acting on $\mathcal{H}_U$, is a discrete series representation if and only if the quasi-regular representation of $G_2$ acting on $\mathcal{H}_{g^T U}$ has the same property        \item[(c)] Let $G = \mathbb{T} \times \mathbb{R}^n \times V \times H$ be a translation complete  subgroup. Then $G$ is conjugate within $G_{aWH}$ to a translation complete subgroup $G' = \mathbb{T} \times \mathbb{R}^n \times \left( \mathbb{R}^k \times \{ 0 \}^{n-k} \right) \times H'$, where $k = \dim(V)$, and $H'<GL(n,\mathbb{R})$ is suitably chosen. 
    \end{enumerate}
\end{lemma}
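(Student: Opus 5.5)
The plan is to prove the three parts in order, relying throughout on the explicit group law (\ref{eqn:group_law}) and the Fourier-side formula (\ref{E.3.2}).

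\emph{Part (a).} First I compute the conjugate of a general element by a pure dilation. Writing $y=(1,0,0,g)$, I get $y^{-1}(z,x,\xi,h)y=(z,g^{-1}x,g^T\xi,g^{-1}hg)$. No phase appears here, because the $\xi$-component of $y$ vanishes and the inverse $y^{-1}=(1,0,0,g^{-1})$ has no modulation part. This immediately gives $y^{-1}G_1y=\mathbb{T}\times\mathbb{R}^n\times g^TV_1\times g^{-1}H_1g$. Since a translation complete group is determined by its pair $(V,H)$, the stated equivalence with $V_2=g^TV_1$, $H_2=g^{-1}H_1g$ follows.

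For the ``only if'' direction of the first claim, suppose $x^{-1}G_1x=G_2$ for some general $x=(z_0,x_0,\xi_0,g)$. I factor $x=n\cdot(1,0,0,g)$ with $n=(z_0,x_0,\xi_0,I_n)\in\mathbb{H}^n$. It then suffices to show that $n^{-1}G_1n=G_1$, i.e.\ that conjugation by Heisenberg elements preserves $G_1$.
\begin{itemize}
\item Conjugating by $n$ leaves the $H$-component fixed, since the matrix part of a product is just $h_1h_2$.
\item The modulation part changes from $\xi$ to $\xi+\xi_0-h^{-T}\xi_0$ (up to sign conventions).
\item The translation part changes to $x+x_0-hx_0$, and the phase changes by a scalar; both stay inside $\mathbb{T}\times\mathbb{R}^n$.
\end{itemize}
The main obstacle is the modulation part: $\xi_0-h^{-T}\xi_0$ need not lie in $V_1$. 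So I would not argue that $n$ normalizes $G_1$. Instead I compare with $G_2$ directly. The set of matrix parts is preserved, so $H_2=g^{-1}H_1g$. The subgroup $\{(z,x,\xi,I)\}$ of $G_i$, namely $\mathbb{T}\times\mathbb{R}^n\times V_i$, is intrinsic: it is the set of elements with trivial matrix part. Conjugating it by $x$ gives $\mathbb{T}\times\mathbb{R}^n\times g^TV_1$, because conjugation by $n$ fixes the $V$-component of elements with $h=I$. Hence $V_2=g^TV_1$, and the pure dilation $(1,0,0,g)$ already conjugates $G_1$ onto $G_2$.

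\emph{Part (b).} Invariance of $g^TU$ follows from the identity $(g^T\gamma).(g^T\xi,g^{-1}hg)=g^T(\gamma.(\xi,h))$, which I verify from (\ref{eqn:def_dual_action}) and which shows $g^T$ intertwines the two dual actions. By (\ref{E.3.2}), $D_g$ maps $\mathcal{H}_U$ onto $\mathcal{H}_{g^TU}$. For admissibility I use $V_{D_g\psi}(D_gf)(y^{-1}\mathbf{x}y)=V_\psi f(\mathbf{x})$, which comes from $\pi(y^{-1}\mathbf{x}y)=D_g^{*}\pi(\mathbf{x})D_g$. The conjugation map $c_y:G_1\to G_2$ is a topological isomorphism, so it carries Haar measure to a constant multiple of Haar measure. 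Consequently boundedness, injectivity and the isometry-up-to-scalar property all transfer. Irreducibility transfers as well, since $D_g$ intertwines the two restrictions; this settles the discrete series statement.

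\emph{Part (c).} I choose $g$ with $g^TV=\mathbb{R}^k\times\{0\}^{n-k}$, which exists since $GL(n,\mathbb{R})$ acts transitively on $k$-dimensional subspaces. Then I set $H'=g^{-1}Hg$ and apply (a).
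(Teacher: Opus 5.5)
\paragraph{Part (b): the intertwining identity is inverted.}
Your identity $V_{D_g\psi}(D_gf)(y^{-1}\mathbf{x}y)=V_\psi f(\mathbf{x})$ does not follow from $\pi(y^{-1}\mathbf{x}y)=D_g^{*}\pi(\mathbf{x})D_g$. Substituting gives $V_{D_g\psi}(D_gf)(y^{-1}\mathbf{x}y)=\langle D_g^2f,\pi(\mathbf{x})D_g^2\psi\rangle$. The relation actually yields $V_{D_g^{*}\psi}(D_g^{*}f)(y^{-1}\mathbf{x}y)=V_\psi f(\mathbf{x})$.

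Your claim that $D_g$ maps $\mathcal{H}_U$ onto $\mathcal{H}_{g^TU}$ is inverted in the same way. From $D_gf(y)=|\det g|^{-1/2}f(g^{-1}y)$ one gets $\widehat{D_gf}(\gamma)=|\det g|^{1/2}\widehat f(g^T\gamma)$. This is what $D_{h^{-T}}\widehat f$ in the first line of (\ref{E.3.2}) means, and it is the form Proposition \ref{L3.1} uses; the $h^{-T}$ in the second line of (\ref{E.3.2}) is inconsistent with both. Hence $D_g\mathcal{H}_U=\mathcal{H}_{g^{-T}U}$, while $D_g^{*}=D_{g^{-1}}$ maps $\mathcal{H}_U$ onto $\mathcal{H}_{g^TU}$.

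Your two slips cancel and reproduce the vector $\pi(1,0,0,g)\psi$ from the statement. That claim is false as written. Take $n=1$, $V_1=\{0\}$, $H_1=\{1\}$ (so $G_2=G_1$), $U=[1,2]$, $\widehat\psi=1_{[1,2]}$ (admissible, since $\Phi_\psi\equiv 1$ on $U$), and $g=2$. Then $D_2\psi$ has Fourier support $[1/2,1]$, which is disjoint from $g^TU=[2,4]$.

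The correct conclusion is that $\pi(1,0,0,g)^{-1}\psi$ is (weakly) admissible for $G_2$ on $\mathcal{H}_{g^TU}$. This is also what the paper's computation proves: it takes $T=\pi(1,0,0,g)$ with $T(\mathcal{H}_{g^TU})=\mathcal{H}_U$, and derives $W_\psi f(y^{-1}\mathbf{x}y)=W_{T\psi}(Tf)(\mathbf{x})$ for $\psi,f\in\mathcal{H}_{g^TU}$. With $D_g^{*}$ in place of $D_g$, the rest of your (b) goes through: Haar measure transported by $c_y$, and unitary equivalence for the discrete series claim.

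\paragraph{Parts (a) and (c).}
Part (a) is correct and takes a different, better route than the paper. The paper reduces to pure dilations by asserting that every element of $\mathbb{H}^n$ normalizes every translation complete subgroup. As you observed, this is false: conjugating $(1,0,0,h)$ by $(1,0,\xi_0,I_n)$ produces the modulation component $\pm(h^{-T}\xi_0-\xi_0)$, which is nonzero for $V_1=\{0\}$, $h=2I_n$, $\xi_0\neq 0$. So the paper's displayed equality $(z,x,\xi,g)^{-1}G_1(z,x,\xi,g)=(1,0,0,g)^{-1}G_1(1,0,0,g)$ fails in general.

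Your argument uses the matrix-part homomorphism and its kernel $\mathbb{T}\times\mathbb{R}^n\times V_i\times\{I_n\}$, on which conjugation by Heisenberg elements only changes the phase. It needs only that $x^{-1}G_1x$ coincides with the translation complete group $G_2$, and it establishes the equivalence correctly.

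Part (c) is also fine. Your normalization $g^TV=\mathbb{R}^k\times\{0\}^{n-k}$ with $H'=g^{-1}Hg$ is the consistent one; the paper's choice $g^T(\mathbb{R}^k\times\{0\}^{n-k})=V$ would require conjugating by $(1,0,0,g^{-1})$ instead.
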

\begin{proof}
    We recall that the Heisenberg group $\mathbb{H}^n$ is normal. In fact, direct computation establishes that elements of $\mathbb{H}^n$ normalize any translation complete subgroup. But this means that 
    \[
    (z,x,\xi,g)^{-1} G_1 (z,x,\xi,g) = (1,0,0,g)^{-1} G_1 (1,0,0,g)~,
    \] which establishes the first statement of (a). The second one follows from 
    \begin{eqnarray*}
    (1,0,0,g)^{-1} (z,x,\xi,h) (1,0,0,g)  & = & (1,0,0,g^{-1}) (z,x,\xi,h) (1,0,0,g)  \\ & = &  (z,g^{-1} x,g^T \xi, g^{-1}hg)~.
    \end{eqnarray*}
    For part (b), pick $\omega = g^T \omega_0 \in g^T U$, as well as $(\xi',h') = (g^T\xi, g^{-1} h g) \in V_2 \times H_2$, i.e., with $(\xi,h) \in V_1 \times H_1$.
    Then the dual action of $(\xi',h')$ on $\omega$ simplifies to 
    \[
    \omega. (\xi',h') = (g^{-1} h g)^T (\omega - g^T \xi) = g^T \underbrace{h^T (\omega_0 - \xi)}_{\in U}
    \] which shows that conjugation by $(1,0,0,g)$ intertwines the dual actions of $V_1 \rtimes H_2^T$ on $U$ and of $V_2 \rtimes H_2^T$ on $g^T U$. This shows the first statement of part (b). For the second statement, we use the quasi-regular representation of the full affine Weyl-Heisenberg group to define the unitary operator $T = \pi(1,0,0,g)$. Then by direct calculation, similar to the one we just performed, $T(\mathcal{H}_{g^T U}) = \mathcal{H}_U$, and given any $(z_1,x_1,\xi_1,h_1) \in G_1$ and every $(z_2,x_2,\xi_2,h_2) = (1,0,0,g^{-1}) (z_1,x_1,\xi_1,h_1) (1,0,0,g)$, we find
    \[
    \pi(z_2,x_2,\xi_2,h_2) = T^{-1} \pi(z_1,x_1,\xi_1,h_1) T~.
    \]    At the level of matrix coefficients this entails for $f, \psi \in \mathcal{H}_{g^T
 U} $
    \[
    W_{\psi} f  (z_2,x_2,\xi_2,h_2) = \langle f, T^{-1} \pi(z_1,x_1,\xi_1,h_1) T \psi \rangle = W_{T \psi} (Tf)(z_1,x_1,\xi_1,h_1)~. 
    \] In particular, since conjugation induces a topological group isomorphism between $G_1$ and $G_2$, uniqueness of Haar measure entails 
    \[
    \| W_\psi f \|_{L^2(G_2)} = C_g \| W_{T \psi} Tf \|_{L^2(G_1)}~.
    \] The remaining statements from part (b) directly follow from these observations. 
    For part (c) it remains to pick $g \in GL(n,\mathbb{R})$ such that $ g^T \left( \mathbb{R}^k \times \{ 0 \}^{n-k}\right) = V$, and then let $H' = g^{-1} H g$.
\end{proof}


\subsection*{Admissible subgroups in dimension 2}
Following the observations in Example \ref{ex:main_examples} and Lemma \ref{lem:covariance_conjugacy}, we will focus on the case $V = \mathbb{R} \times \{ 0 \} \subset \mathbb{R}^2$.  Our primary goal is to identify matrix groups $H$ that are compatible with $V$, i.e., fulfill $H^T V = V$, and such that the dual action has a single open orbit, with associated compact stabilizers. 
\begin{example} \label{ex:dim_two}
    Let $V=\mathbb{R}\times \{0\}$ and $H=\{\exp(t\mathbb{X}):t\in \mathbb{R}\}$, where $\mathbb{X}=\left({\begin{array}{cc}
a & 0\\
c & d    
\end{array}}
\right),a,c,d\in \mathbb{R}$ are fixed with $d\neq 0$. Without loss of generality, let $d=1$.
We let \[H =\left\{\left({\begin{array}{cc}

e^{ta} & 0\\
f(t) & e^t    

\end{array}}
\right) :t\in \mathbb{R}\right\},\]
where 
\[
f(t)=tc+\frac{t^2c}{2!}[1+a]+\frac{t^3c}{3!}[1+a+a^2]+\frac{t^4c}{4!}[1+a+a^2+a^3]+
...~.
\]
As explained in example \ref{ex:block_structure}, $V$ is invariant under $H^T$, i.e., $H^TV\subset V$.\\
Now consider the orbit $\mathcal{O}_{(1,1)}$.
To see the action of $V\rtimes H^T$ on $\mathbb{R}^2$,
let $\gamma=\left({\begin{array}{c}

\gamma_1\\
\gamma_2
\end{array}}
\right)\in U$, $\xi=\left({\begin{array}{c}

\xi_1\\
0
\end{array}}
\right)\in V$ and $h=\left({\begin{array}{cc}

e^{ta} & 0\\
f(t) & e^t    
\end{array}}
\right)\in H_0$. 
So, 
\[
 h^T(\gamma-\xi)= \left({\begin{array}{cc}

e^{ta} & f(t)\\
0 & e^t    

\end{array}}
\right)
\left({\begin{array}{c}
\gamma_1 - \xi_1\\
\gamma_2
\end{array}}
\right)= \left({\begin{array}{c}
e^{ta}(\gamma_1-\xi_1)+ f(t)\gamma_2\\
e^t\gamma_2
\end{array}}
\right).
\]
Thus, 
\[
\mathcal{O}_{1,1}=\left\{ \left({\begin{array}{c}

e^{ta}-e^{ta}\xi_1+ f(t)\\
e^t
\end{array}}\right): \xi_1, t\in \mathbb{R}
\right\}.\]
Clearly, a suitable (in fact: unique) choice of $t$ solves the equation 
$y = e^t$, for given $y \in \mathbb{R}^+$, and once $t$ is fixed accordingly, $\xi_1 \in \mathbb{R}$ can be determined uniquely to solve 
\[ x = e^{ta}-e^{ta}\xi_1+ f(t) \,\,
\] for any given $x \in \mathbb{R}$. This shows that $V \rtimes H^T$ acts freely on the  open orbit $\mathcal{O}_{1,1} = \mathbb{R} \times \mathbb{R}^+$, thereby proving (via Corollary \ref{cor:char_discrete_series}) that the subrepresentation acting on $\mathcal{H}_{\mathcal{O}_{(1,1)}}$ is in the discrete series. 

By the same argument we get $\mathcal{O}_{(1,-1)} = \mathbb{R} \times (- \mathbb{R}^+)$, and therefore the subspace $\mathcal{H}_{\mathcal{O}_{(1,-1)}}$ carries a discrete series subrepresentation as well. In summary, $L^2(\mathbb{R}^2) = \mathcal{H}_{\mathcal{O}_{(1,1)}} \oplus \mathcal{H}_{\mathcal{O}_{(1,1-)}}$  is the sum of two discrete series subrepresentations. 

We finally determine the admissibility condition. Suppose $\psi\in \mathcal{H}_{\mathcal{O}_{1,1}}$ and $\eta=h^T(\gamma-\psi)$. 
By Theorem \eqref{T3.1}, $\psi$ is admissible if and only if 
\[
 \int_{\mathcal{O}_{1,1}} |\widehat{\psi}(\eta)|^2 \Psi(\eta) d\lambda(\eta)<\infty,
\]
where $$\Psi(h^T(\gamma-\xi)):=\frac{|{\rm det} (h^T|_V)|}{\Delta_H(h)|{\rm det}(h)|}.$$
In order to make this even more explicit, we first observe that the one-parameter group $H$ is commutative, 
and ${\rm det}(h^T|_V)=e^{ta}$, 
hence
$\Psi(\eta)=\frac{1}{e^t}$. 
Therefore,
\[
 \psi \mbox{ is admissible } \Leftrightarrow \int_{\mathbb{R}\times \mathbb{R}^*} |\widehat{\psi}(e^{ta}-e^{ta}\xi_1+ f(t),e^t)|^2\frac{1}{e^t} dtd\xi_1<\infty.
\]
This can be made even more explicit by employing
\begin{equation*}
 \begin{aligned}
   f(t)&=&& c \sum_{k=1}^{\infty} \frac{1}{k!}t^k (\sum_{m=0}^{k-1}a^m)\\
   &=&& c \sum_{k=1}^{\infty} \frac{1}{k!}t^k (\frac{a^k-1}{a-1}), \hspace{0.7cm} a\neq 1\\
   &=&& \frac{c}{a-1}(e^{ta}-e^t).
 \end{aligned}
\end{equation*}
\end{example}
 It can be shown that the dilation groups $H$ given above cover all suitable nontrivial examples of dilation groups that give rise to translation complete subgroups $G = \mathbb{T} \times \mathbb{R}^n \times V \times H$, with $V=\mathbb{R}\times \{0\}$. All other closed connected subgroups $H \subset GL(2,\mathbb{R})$ either fail the compatibility condition $H^T V \subset V$, or do not have open orbits with compact stabilizers. We refer to the thesis \cite{rashidi2020reproducing} of the first author for additional explanations. 

\subsection*{ Admissible subgroups in dimension 3.}
Applying again the reasoning of Example \ref{ex:main_examples} and Lemma \ref{lem:covariance_conjugacy}, we focus on the cases $V_j = \mathbb{R}^j \times \{ 0 \}^{3-j} \subset \widehat{\mathbb{R}^3}$, for $j=1,2$.
The criterion established in Example \ref{ex:block_structure} shows that $h^T V_i \subset V_i$ is equivalent to the block structure
\[
h = \left( \begin{array}{cc} h_{11} & 0 \\ h_{21} & h_{22} \end{array} \right)\,\,,
\] where $h_{11}$ is a suitable $1 \times 1$ block for $V_1$, and $0$ is of size $1 \times 2$, while $h_{11}$ is of size $2 \times 2$ for $V_2$, and $0$ of size $2 \times 1$. We will now give several families of dilation groups, check their compatibility with $V_1$ and $V_2$, and comment on admissibility (in various degrees). Similarly to Example \ref{ex:dim_two}, the explicit determination of admissibility criteria is a more or less straightforward computational task for each of the examples presented in the following; however, we will focus on determining (weak) admissibility. 

\begin{example} \label{E1}
 Assume $0\neq \beta$, $\alpha\in\mathbb{R}$ are fixed. Both $V_1$ and $V_2$ are invariant under
\[H =\left\{\left({\begin{array}{ccc}
e^{\alpha x} & 0 & 0\\
0 & e^{\beta x}    & 0\\
0 & 0 & e^{ x}
\end{array}}
\right): x\in \mathbb{R}\right\},
\]
i.e., $H^TV_1\subset V_1$ and $H^TV_2\subset V_2$.
Furthermore, $V_1 \rtimes H^T$ acts on $\mathbb{R}^3$ by
\[
(\gamma,(\xi,h))\mapsto
\left({\begin{array}{ccc}
e^{\alpha x}(\gamma_1-\xi_1) \\
e^{\beta x} \gamma_2 \\
e^{ x}\gamma_3 
\end{array}}
\right),
\]
This action can be seen to be free on the open conull subset $U = \mathbb{R} \times \mathbb{R}^* \times \mathbb{R} \subset \mathbb{R}^3$. However, already for dimension reasons, there cannot be an open orbit, since $\dim(V_1) + \dim(H) = 2 < 3$ (recall Lemma \ref{L1}). Hence, the quasi-regular representation has no discrete series subrepresentations. However, it is easy to verify that $\{0 \} \times \{ \pm 1 \} \times \mathbb{R} \subset U$ is a Borel transversal for orbit space $U/V_1 \rtimes H^T$. Hence, the group $G = \mathbb{T} \times \mathbb{R}^n \times V_1 \times H$ is weakly admissible by Theorem \ref{thm:char_weak_adm}. By Corollary \ref{cor:unimod_nonadm} and Theorem \ref{thm:adm_nonunimodular}, it is strongly admissible if and only if it is nonunimodular, and this is the case if and only if $\alpha + \beta \not= -1$.

Regarding $V_2$, we first note that $H$ is compatible with $V_2$. Furthermore, the action of $V_2 \rtimes H_0^T$ has open orbits. In fact, picking any $\xi = (1,\epsilon_1, \epsilon_2 )$, with $\epsilon_1,\epsilon_2 \in \{ \pm 1 \}$, a computation similar to the one carried out in Example \ref{ex:dim_two} establishes that $V_2 \rtimes H^T \xi = \mathbb{R} \times \epsilon_1 \mathbb{R}^+ \times \epsilon_2 \mathbb{R}^+$, and that the action on these open orbits is again free. Hence, the quasi-regular representation decomposes into four subspaces on which the representation acts via discrete series representations. 

\end{example}

\begin{example} \label{E3} Let $0\neq\beta$, $\alpha$ be fixed, and
\[H=\left\{\left({\begin{array}{ccc}
e^{\alpha x} & xe^{\alpha x} & 0\\
0 & e^{\alpha x}    & 0\\
0 & 0 & e^{\beta x}
\end{array}}
\right): x\in \mathbb{R} \right\}.
\]
$V_1$ is not invariant under $H^T$, but $H^TV_2\subset V_2$. $V_2 \rtimes H^T$ acts on $\mathbb{R}^3$ by
\[
(\gamma,(\xi,h))\mapsto
\left({\begin{array}{ccc}
e^{\alpha x}(\gamma_1-\xi_1)+xe^{\alpha x} (\gamma_2 -\xi_2)\\
e^{\alpha x} (\gamma_2-\xi_2) \\
e^{\beta x}\gamma_3 
\end{array}}
\right)\,\,.
\]
This action has two open orbits $\mathbb{R}^2 \times \pm \mathbb{R}^+$, where it is free. Hence, the representation of the associated translation complete subgroup decomposes into two discrete series representations. 
\end{example}

\begin{example} \label{E4} Let $\alpha\in \mathbb{R}$  be fixed, and
\[H=\left\{\left({\begin{array}{ccc}
e^{\alpha x} & 0 & 0\\
0 & e^{ x}    & 0\\
0 & 0 & e^{y}
\end{array}}
\right): x,y\in \mathbb{R} \right\}.
\]
 $V_1$, $V_2$ are invariant under $H^T$, and $V_1 \rtimes H^T$  acts on $\mathbb{R}^3$ by
\[
(\gamma,(\xi,h))\mapsto
\left({\begin{array}{ccc}
e^{\alpha x}(\gamma_1-\xi_1) \\
e^{ x} \gamma_2 \\
e^{ y}\gamma_3 
\end{array}}
\right).
\]
We obtain four open orbits
$ 
\mathcal{O}_{(1,\pm 1, \pm 1)}
 = \mathbb{R} \times   \pm \mathbb{R}^+ \times  \pm \mathbb{R}^+.
$
The action on each orbit is free, again yielding a decomposition of the canonical representation into four discrete series representations. 
By contrast, the action of $V_2 \rtimes H^T$ has two open orbits, but noncompact stabilizers everywhere on $\mathbb{R}^3$. This shows that the associated representation decomposes into irreducible subrepresentations that are not in the discrete series; no nonzero matrix coefficient is square-integrable in this setting. 
\end{example}
\begin{example}\label{E2} Let
\[H=\left\{\left({\begin{array}{cc|c}
e^{\alpha x}\cos(x) & e^{\alpha x}\sin(x)  & 0\\
-e^{\alpha x}\sin(x) & e^{\alpha x}\cos(x)    & 0\\ \hline
0 & 0 & e^{ x}
\end{array}}
\right): x\in \mathbb{R}, \alpha=0,1 \right\}.
\]
$V_1$ is not invariant under $H^T$ but $V_2$ is. Also,
$V_2 \rtimes H$ acts on $\mathbb{R}^3$ by
\begin{equation*}
\begin{aligned}
(\gamma,(\xi,h))\mapsto
\left({\begin{array}{ccc}
e^{\alpha x}\cos(x)(\gamma_1-\xi_1)+ e^{\alpha x}\sin(x)(\gamma_2 -\xi_2) \\
-e^{\alpha x}\sin(x)(\gamma_1-\xi_1)+ e^{\alpha x}\cos(x) (\gamma_2-\xi_2) \\
e^{ x}\gamma_3 
\end{array}}
\right),
\end{aligned}
\end{equation*}
which shows the existence of two open orbits 
$
 \mathcal{O}_{(1,1,\pm 1)}= \mathbb{R} \times  \mathbb{R} \times  \pm \mathbb{R}^+\,\,.
$
The action on these orbits is free, hence we obtain a decomposition of the representation into two discrete series subrepresentations. 
\end{example}
{\remark All other elements of the catalog of subgroups of $GL(3,\mathbb{R})$ given in \cite{Wick} fail the compatibility or the (weak) admissibility condition, for both $V_1$ and $V_2$.}

\begin{remark}
    We remind the reader that in all cases where $\mathbb{R}^n$ decomposes into finitely many open free orbits, the representation acting on all of $L^2(\mathbb{R}^n)$ is in fact strongly square integrable, see Remark \ref{rem:direct_sum}.
    In fact, for each of the concrete examples $H$ with open free orbits given in this section, it is possible to construct a larger group $H_1 \supset H$, with $H_1/H$ finite, and such that the action of $V \rtimes H_1^T$ has a \textit{single} open free orbit. Hence, the slightly larger translation complete subgroup $G = \mathbb{T} \times \mathbb{R}^n \times V \times H_1$ acts via a discrete series representation on all of $L^2(\mathbb{R}^n)$.
\end{remark}

\section*{Concluding remarks}
One could also conceive of the analogous class of \textit{modulation complete} groups, i.e., subgroups of $G_{aWH}$ of the type $G = \mathbb{T} \times W \times \widehat{\mathbb{R}}^n \times H$. However, rather than developing a collection of results analogous to the results of our paper, it is more efficient to observe that each such group is conjugate to a translation complete group via the Fourier transform, and all results related to admissible vectors -including admissible vectors, admissible subgroups, and related notions - can be readily transferred from one setting to the other.

Generalizing this observation, it could also be interesting to study conjugates of translation complete subgroups in the larger group $\mathcal{G} = \mathbb{H}^n \rtimes Sp(n,\mathbb{R})$, thereby allowing to transfer the sharp admissibility criteria that apply in the translation complete case to settings where currently the standard option for assessing admissible vectors is the Wigner transform, as e.g. in \cite{DeMari,Alberti} and related work. 

This paper focused on inversion formulae and admissibility criteria. The usefulness of such wavelet systems often hinges on their ability to efficiently approximate certain signal classes. These approximation-theoretic aspects of wavelet and time-frequency systems are best captured by their associated coorbit spaces, and for the affine case, these spaces are fairly well understood by now \cite{MR3345605,MR3378833}. We expect that these aspects carry over to translation complete subgroups as well, starting with the applicability of coorbit theory itself, which relies on the notion of \textit{integrable representation}. 

The reasoning developed for the affine case, in particular regarding integrable representations \cite{MR3378833,MR4361899}, the description of coorbit spaces as the so-called decomposition spaces \cite{MR3345605}, and the subsequent use made of this identification to classify dilation groups according to their associated coorbit spaces (e.g. as in \cite{MR4464548, MR4832531}), makes systematic use of the dual action to associate group parameters with frequency parameters and vice versa. It is highly plausible that this type of reasoning can be extended to translation complete groups as well.

\bibliographystyle{plain}
\bibliography{Ref.bib}





\end{document}